\numberwithin{equation}{section}
\newtheorem{mthm}{Theorem}
\newtheorem*{mvar}{Variation}
\newtheorem{thm}{Theorem}[section]
\newtheorem{cor}[thm]{Corollary}
\newtheorem{prop}[thm]{Proposition}
\newtheorem{lem}[thm]{Lemma}
\theoremstyle{definition}
\newtheorem{rem}[thm]{Remark}
\newtheorem*{rem*}{Remark}
\newtheorem{rems}[thm]{Remarks}
\def\one{\mathbf{1\kern-1.6mm 1}}
\def\se{\subseteq}
\def\d{\,\mathrm{d}}
\def\cat#1{\ensuremath{\mathrm{CAT}(#1)}}
\def\CC{\mathbf{C}}
\def\NN{\mathbf{N}}
\def\PP{\mathbf{P}}
\def\RR{\mathbf{R}}
\def\ZZ{\mathbf{Z}}
\def\QQ{\mathbf{Q}}
\def\TT{\mathbf{T}}
\def\Ker{{\rm Ker}}
\def\Aut{{\rm Aut}}
\def\GL{{\rm GL}}
\def\SL{{\rm SL}}
\def\SO{{\rm SO}}
\def\Ramen{{\rm Ramen}}
\def\Homeo{{\rm Homeo}}
\def\Prob{{\mathrm{Prob}}}
\def\O{\mathscr{O}}
\def\ro{\varrho}
\def\fhi{\varphi}
\def\teta{\vartheta}
\def\lra{\longrightarrow}
\def\No{N\raise4pt\hbox{\tiny o}\kern+.2em}
\def\no{n\raise4pt\hbox{\tiny o}\kern+.2em}
\def\beq{\begin{equation}}
\def\eeq{\end{equation}}
\def\acts{\curvearrowright}
\newcommand\ip[2]{\left\langle{#1},{#2}\right\rangle}
\def\normal{\triangleleft}
\DeclareMathOperator*{\esssup}{ess\,sup}
\title{Product groups acting on manifolds}
\author[Alex Furman]{Alex Furman*}
\address{University of Illinois at Chicago}
\email{furman@math.uic.edu}
\thanks{*Supported in part by NSF grants DMS-0094245, DMS-0604611. }
\author[Nicolas Monod]{Nicolas Monod**}
\address{University of Chicago \& Universit\'e de Gen\`eve}
\curraddr{EPFL, Switzerland}
\email{nicolas.monod@epfl.ch}
\thanks{**Supported in part by the Swiss National Science Foundation}
\begin{document}
\begin{abstract}
We analyse volume-preserving actions of product groups on Riemannian manifolds. To this end, we establish a new superrigidity theorem for ergodic cocycles of product groups ranging in linear groups. There are no \emph{a priori} assumptions on the acting groups, except a spectral gap assumption on their action.

Our main application to manifolds concerns irreducible actions of Kazhdan product groups. We prove the following dichotomy: Either the action is infinitesimally linear, which means that the derivative cocycle arises from unbounded linear representations of all factors. Otherwise, the action is measurably isometric, in which case there are at most two factors in the product group.

As a first application, this provides lower bounds on the dimension of the manifold in terms of the number of factors in the acting group. Another application is a strong restriction for actions of non-linear groups. 
\end{abstract}
\maketitle
\section{Introduction and Statement of the Main Results}
\subsection{Actions on Manifolds}
Consider a group $\Gamma$ acting on a compact Riemannian manifold $M$
by volume-preserving diffeomorphisms. What restrictions, if any, does
the structure of $\Gamma$ impose upon the dimension of $M$ and the
dynamics of the action?

\smallskip

When $\Gamma$ is a lattice in a semi-simple group of higher rank, this
investigation is the object of Zimmer's programme~\cite{Zimmer:ICM},
which aims at a non-linear (or infinite-dimensional) extension of Margulis'
work~\cite{Margulis:ICM},\cite{Margulis:book} on superrigidity.
(See \emph{e.g.}~\cite{Zimmer:cocyclesuper:80}, 
\cite{Zimmer:book:84}, 
\cite{Zimmer:Tonmanifolds:84},
\cite{Zimmer:IHES:84}, 
\cite{LubotzkyZimmer:Topology:01},
\cite{FisherZimmer:02}, 
\cite{FisherMargulis}.)
Zimmer conjectures that there is no (infinite) action when the manifold has lower dimension
than any linear representation of the semi-simple group. In this setting, Zimmer's cocycle superrigidity
theorem establishes the existence of an invariant measurable metric. Had it been a smooth metric,
the original action would have been smoothly conjugated to translations on a homogeneous space of
a compact group, thus answering the question. In~\cite{Zimmer:spectrum:91}, Zimmer
provides a \emph{measurable} conjugation using Kazhdan property~(T).

\bigskip

In this paper, we shall focus on the case where $\Gamma=\Gamma_{1}\times\cdots\times\Gamma_{n}$
is a product of (at least two) groups or perhaps a lattice in suitable product groups.
This apparently weak hypothesis has recently been found to be
a fertile ground for very diverse rigidity phenomena
(see \emph{e.g.}~\cite{BFGM:FTLp:05}, 
\cite{BaderFurmanShaker}, 
\cite{BaderShalom:Factor:05},
\cite{BurgerMonod:GAFA:02}, 
\cite{Burger-Mozes2},
\cite{Gelander-Karlsson-Margulis}, 
\cite{HjorthKechris:MAMS:05},
\cite{Monod:SuperSplitting:06}, 
\cite{MonodShalom:CO:04},
\cite{MonodShalom:OE:05}, 
\cite{Popa07JAMS}, 
\cite{Shalom:Inven:00}).

\smallskip

One should of course discard the case where the action is a
combination of independent actions of the factors, for instance
when $M$ is a product $M_{1}\times\cdots\times M_{n}$ with 
component-wise action. We shall impose the following condition, hereafter called
\emph{ergodic irreducibility}: Each factor $\Gamma_{i}$ act ergodically on $M$. 

\medskip

Very specific examples of ergodically irreducible actions are provided by algebraic actions
on homogeneous manifolds through linear representations of the acting groups
(see Section~\ref{sec:further} below for a description).

The main motivation of this paper is to show that \emph{from the measurable cocycle viewpoint}
these linear examples are in fact essentially the only possibilities when
the groups have Kazhdan's property~(T). More precisely:
\begin{mthm}
\label{T:product-of-T}
Let $\Gamma=\Gamma_{1}\times \cdots \times \Gamma_{n}$ be a product of $n\ge 2$ groups
with property~(T) with a smooth volume-preserving action on a compact Riemannian
manifold $M$ (of non-zero dimension) such that each $\Gamma_{i}$ acts ergodically.
Then, either:
\begin{itemize}
\item[(${\rm Lin}_{1}$)]
There are for each $i$ finite index subgroups $\Gamma_{i}^*<\Gamma_{i}$
with Zariski-dense representations $\Gamma_{i}^*\to H_{i}$
in connected non-compact semi-simple Lie groups $H_{i}$;
\item[(${\rm Lin}_{2}$)]
The product $H=H_{1}\times\cdots\times H_{n}$ is a subquotient
of $\SL_{\dim(M)}(\RR)$; in particular, $\dim(M)\ge 3\,n$;
\item[(${\rm Lin}_{3}$)]
The derivative cocycle of $\Gamma\acts M$ is virtually semi-conjugated modulo
an amenable group to the product representation $\prod_i \Gamma^*_i\to H$;
\end{itemize}
Or otherwise, the following hold:
\begin{itemize}
\item[(${\rm Ism}_{1}$)]
There are only two factors: $\Gamma=\Gamma_{1}\times\Gamma_{2}$;
\item[(${\rm Ism}_{2}$)]
There are homomorphisms $\tau_{i}:\Gamma_{i}\to K$ with dense images in some compact group $K$.
\item[(${\rm Ism}_{3}$)]
The $\Gamma$-action on $M$ is measurably isomorphic to the bilateral action on $K$:
$$(g_{1},g_{2}) k\ =\ \tau_{1}(g_{1})\,k\,\tau_{2}(g_{2})^{-1}.$$
\end{itemize}
\end{mthm}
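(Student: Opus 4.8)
The plan is to run Zimmer's strategy with the paper's new cocycle superrigidity theorem for product groups in place of Margulis superrigidity, and then to unwind its two alternatives. \emph{Set-up, reduction, and the dichotomy.} I would fix a measurable trivialisation of $TM$ and pass to the derivative cocycle $D\colon\Gamma\times M\to\mathrm{SL}^{\pm}_{d}(\RR)$, $d=\dim M$; volume-preservation is precisely what gives $|\det D|\equiv 1$, and after replacing $\Gamma$ by an index-two subgroup we may take values in $\mathrm{SL}_{d}(\RR)$. The role of property~(T) is to feed the superrigidity theorem: since each $\Gamma_{i}$ has property~(T) and acts ergodically, the Koopman representation of $\Gamma_{i}$ on $L^{2}_{0}(M,\mathrm{vol})$ has no almost-invariant vectors --- a spectral gap --- which is exactly the hypothesis under which our cocycle superrigidity theorem applies to $\Gamma=\Gamma_{1}\times\cdots\times\Gamma_{n}$ with cocycle $D$. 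After reducing $D$ to its algebraic hull, that theorem produces finite-index subgroups $\Gamma_{i}^{*}<\Gamma_{i}$ (property~(T) being inherited) and a clean dichotomy: \emph{either} for each $i$ there is a Zariski-dense homomorphism $\Gamma_{i}^{*}\to H_{i}$ into a connected non-compact semisimple Lie group, with $D|_{\prod_{i}\Gamma_{i}^{*}}$ virtually semi-conjugate, modulo an amenable group, to the product map into $H=\prod_{i}H_{i}$; \emph{or} $D$ is cohomologous to a cocycle valued in a compact subgroup of $\mathrm{SL}_{d}(\RR)$, equivalently $\Gamma$ preserves a measurable Riemannian metric on $M$. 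That the mixed case is impossible --- some factors of linear type, others of compact type --- is part of what the superrigidity theorem delivers, and is where ergodic irreducibility and the commutation of factors enter: a factor of compact type acts measurably by translations on a compact homogeneous space (Zimmer's theorem, used again below), and a commuting factor is then forced to act through the compact centraliser of that essentially transitive action. Establishing this superrigidity theorem --- a Margulis-type conclusion for an \emph{arbitrary} product group, with no ambient algebraic structure, from the spectral-gap hypothesis alone --- is the real work; granting it, the two alternatives unwind as follows.

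\emph{The linear alternative.} Here $(\mathrm{Lin}_{1})$ is immediate and $(\mathrm{Lin}_{3})$ is a restatement of the superrigidity conclusion. For $(\mathrm{Lin}_{2})$, the algebraic hull of $D$ is a subgroup $L<\mathrm{SL}_{d}(\RR)$ having $H$ as a subquotient, so $H$ is a subquotient of $\mathrm{SL}_{d}(\RR)$. For the bound $\dim M\ge 3n$ I would first pass to the adjoint forms of the $H_{i}$ (still non-compact semisimple Zariski-dense targets), so that $\mathfrak h=\bigoplus_{i}\mathfrak h_{i}$ embeds as a Levi subalgebra of $\mathfrak{sl}_{d}$ and $H$ acts faithfully on $\RR^{d}$. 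Take a composition series of $\RR^{d}$ for $H$, and for a composition factor $V$ set $S_{V}=\{i:H_{i}\ \text{acts non-trivially on}\ V\}$. On $V$, $H$ acts through an external tensor product over the simple factors, and for each $i\in S_{V}$ at least one simple factor of $H_{i}$ --- being center-free --- contributes a non-trivial tensor factor of dimension $\ge 3$; hence $\dim V\ge 3^{|S_{V}|}\ge 3\,|S_{V}|$. Since faithfulness puts each $i$ in some $S_{V}$, one has $\sum_{V}|S_{V}|\ge n$, and summing gives $\dim M=\sum_{V}\dim V\ge 3n$.

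\emph{The isometric alternative.} Now $\Gamma$ preserves a measurable metric and has property~(T), so by Zimmer's measurable-conjugation theorem~\cite{Zimmer:spectrum:91} the action is measurably isomorphic to an isometric action on $N/N_{0}$ for a compact Lie group $N$ carrying a dense homomorphism $\Gamma\to N$. Let $A_{i}\le N$ be the closure of the image of $\Gamma_{i}$: the $A_{i}$ commute pairwise and $N=A_{1}\cdots A_{n}$. Ergodicity of $\Gamma_{i}$ forces $A_{i}$ to act transitively on $N/N_{0}$; writing $B_{1}=A_{1}\cap N_{0}$ we get $N/N_{0}\cong A_{1}/B_{1}$ as $A_{1}$-spaces, and since $A_{2}$ commutes with this transitive $A_{1}$-action and is itself transitive, its image exhausts the centraliser $N_{A_{1}}(B_{1})/B_{1}$ acting by right translations, which forces $B_{1}\normal A_{1}$. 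Thus $M$ is, measurably and $A_{1}$-equivariantly, the compact group $K:=A_{1}/B_{1}$, with $A_{1}$ acting on the left and $A_{2}$ on the right. A further factor $A_{3}$ commutes with both, hence acts by translations that are simultaneously left and right, i.e.\ through a homomorphism $A_{3}\to Z(K)$, and its transitivity then makes $K$ abelian. But property~(T) forces $\Gamma_{i}^{\mathrm{ab}}$, hence $A_{i}^{\mathrm{ab}}$, hence $K^{\mathrm{ab}}$, to be finite, so an abelian $K$ is finite --- contradicting $\dim M\ne 0$. Therefore $n=2$, which is $(\mathrm{Ism}_{1})$; with $\tau_{i}\colon\Gamma_{i}\to K$ the induced dense homomorphisms and the right translation written as $k\mapsto k\,\tau_{2}(g_{2})^{-1}$ this yields $(\mathrm{Ism}_{2})$ and $(\mathrm{Ism}_{3})$. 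The delicate points here, once Zimmer's theorem is in hand, are the passage from the cocycle-level conclusions to these homogeneous-space models and the commuting-transitive-actions analysis above; the genuine obstacle, though, is the product-group cocycle superrigidity theorem invoked at the very start.
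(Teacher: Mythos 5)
Your overall strategy is the paper's: derivative cocycle, spectral gap from property~(T), the product-group cocycle superrigidity theorem, then Zimmer's discrete-spectrum theorem plus Mackey and a commutant analysis on the isometric side (your transitivity/normality/abelian-if-$n\ge3$/finite-by-(T) chain is essentially the paper's argument). But there are two genuine gaps. First, you misstate what the superrigidity theorem delivers and thereby skip a necessary bridge. The theorem produces a splitting $H=H_1\times\cdots\times H_n$ of the hull modulo its amenable radical into \emph{possibly trivial} factors, with Zariski-dense maps $\Gamma_i^*\to H_i$; it does \emph{not} give the clean dichotomy ``all factors linear'' versus ``the whole cocycle is cohomologous into a compact group'', and it does not exclude the mixed case. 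The actual argument is: if some $H_i$ is trivial, then $\alpha|_{\Gamma_i}$ is cohomologous to a cocycle into an amenable group (a finite extension of $\Ramen(L^0)$), and \emph{by property~(T)} (Zimmer's result that amenable-valued cocycles of Kazhdan groups are cohomologous to compact-valued ones) that single factor preserves a measurable metric; Zimmer's discrete-spectrum theorem is then applied to that factor alone, and the centraliser analysis handles the remaining factors and shows a posteriori that the whole action is translation on a compact group. You instead open the isometric alternative with ``$\Gamma$ preserves a measurable metric'', which is not available at that point; your own parenthetical sketch of the mixed case is the right mechanism, but it is not ``part of what the superrigidity theorem delivers'', and the amenable-radical/(T) step is missing from your write-up.

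Second, your proof of the bound $\dim M\ge 3n$ is flawed at the key inequality. You justify ``each nontrivial tensor factor has dimension $\ge 3$'' by centre-freeness of the simple factors of $H_i$, but the subgroup of $\SL_d(\RR)$ realizing the subquotient is only isogenous to $H_i$: the image $\SL_2(\RR)\times\SL_2(\RR)\to\SL_4(\RR)$ acting on $V_2\boxtimes V_2$ exhibits the centre-free group $\mathrm{PSL}_2(\RR)\times \mathrm{PSL}_2(\RR)$ as a subquotient of $\SL_4(\RR)$, so your claimed estimate $\dim V\ge 3^{|S_V|}$ fails for centre-free targets in general, and with it the conclusion $d\ge 3n$. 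The bound genuinely uses property~(T): since $\Gamma_i^*$ is Kazhdan and maps Zariski-densely, Haagerup-type arguments exclude rank-one (in particular $\mathrm{PSL}_2$) factors, and simple Kazhdan groups have no unbounded real representation of dimension $<3$ (Haagerup property of $\SL_2(\RR)$ plus compactness of abelianizations); with that input your tensor-product counting over composition factors is a correct and welcome elaboration of the paper's terse remark, but as written the ``$\ge 3$'' has no valid justification.
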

\begin{rems}\label{R:pesin}
(1) In case~(${\rm Lin}$), the most visible consequence of our statement
is perhaps that each factor $\Gamma_i$ of the group $\Gamma$ must admit 
an unbounded real linear representation, and the restriction on the dimension.
As for the dynamical conclusion~(${\rm Lin}_3$), it will be made more precise
below; it involves a sequence of reductions such as passing to an algebraic hull
and dividing by the amenable radical. This procedure still retains some basic characteristics
of the derivative cocycle which enables one to describe, for example, the Kolmogorov--Sinai entropy $h(g,M)$
of individual diffeomorphisms $g$ of $M$:
\begin{equation}\label{E:pesin}
h(g,M)\ =\ \sum_k m_k \lambda_k \qquad\big(g\in\prod_i \Gamma^*_i\big),
\end{equation}
where the $\lambda_k$ are the positive Lyapunov exponents of $\ro(g)$ and $m_k$ the corresponding
total multiplicities as explained in Section~\ref{S:cors}.

(2) The proof of Theorem~\ref{T:product-of-T} combines our superrigidity theorem (Theorem~\ref{T:cocycle2}
below) with Zimmer's~\cite{Zimmer:spectrum:91}. In our setting, the (Ism) alternative takes a particularly
precise form (only two factors, no isotropy subgroup of the compact group). The problem of the smoothness
of the conjugation remains.
\end{rems}
Theorem~\ref{T:product-of-T} is a rigidity statement describing suitable actions of product
groups as arising from linear representations and in particular providing dimension bounds.
As an immediate by-product, it rules out certain actions, see Section~\ref{sec:further}.
For non-Kazhdan groups, a number of the conclusions still hold as long as at least one factor has
property~(T):

\begin{mvar}[On Theorem~\ref{T:product-of-T}]
Let $\Gamma=\Gamma_{1}\times \cdots \times \Gamma_{n}$ be a product of $n\ge 2$ groups
with a smooth volume-preserving action on a compact Riemannian
manifold $M$ (of non-zero dimension) such that each $\Gamma_{i}$ acts ergodically.
Assume that $\Gamma_i$ has property~(T) for $1\leq i \leq k$, where $k\geq 1$.
Then, either:
\begin{itemize}
\item[(${\rm Lin'}_{1}$)]
For $i\leq k$, there are finite index subgroups $\Gamma_{i}^*<\Gamma_{i}$
with Zariski-dense representations $\Gamma_{i}^*\to H_{i}$
in connected non-compact semi-simple Lie groups $H_{i}$;
\item[(${\rm Lin'}_{2}$)]
The product $H=H_{1}\times\cdots\times H_{k}$ is a subquotient
of $\SL_{\dim(M)}(\RR)$; in particular, $\dim(M)\ge 3\,k$;
\item[(${\rm Lin'}_{3}$)]
The derivative cocycle of $\Gamma_1\times \cdots \times \Gamma_k \acts M$ is virtually semi-conjugated modulo
an amenable group to the product representation $\prod_{i\leq k} \Gamma^*_i\to H$;
\end{itemize}
Or otherwise, $({\rm Ism}_{1,2,3})$ hold as in Theorem~\ref{T:product-of-T}.
\end{mvar}

The possibility of non-Kazhdan factors actually restricts further the alternative~(Ism), since for instance a compact group
cannot contain both an infinite Kazhdan group and a dense commutative (or more generally soluble) subgroup. For example:

\smallskip
\itshape
Let $\Gamma=\Gamma_{1} \times \Gamma_{2}$ with $\Gamma_1$ non-linear Kazhdan and $\Gamma_2$ soluble.
Then $\Gamma$ admits no ergodically irreducible smooth volume-preserving action on any compact Riemannian
manifold of non-zero dimension.
\upshape

\bigskip

We now turn to actions of groups $\Gamma$ which, rather than being products, are lattices in
a product $G=G_1\times \cdots \times G_n$ of $n\geq 2$ locally compact groups. Such lattices are assumed
\emph{irreducible} in the sense that $G_i\cdot\Gamma$ is dense in $G$ for all $i$.

If the lattice is not cocompact, we shall need to assume it to be \emph{integrable};
this condition will be explained in more detail below and means that the canonical cocycle
class $G\times G/\Gamma\to\Gamma$ has a representative $c$ such that the word-length of $c(g, -)$
is in $L^1(G/\Gamma)$ for every $g\in G$.

\begin{mthm}[Lattices in Products]
\label{T:lattices}
Let $G=G_{1}\times\cdots\times G_{n}$ be a product of $n\ge 2$
locally compact second countable groups with property~(T).
Let $\Gamma<G$ be a cocompact or integrable irreducible lattice with a 
mixing smooth volume-preserving action a compact Riemannian manifold $M$.

Then $G$ admits an unbounded continuous real linear representation.

Moreover, this $G$-representation is virtually semi-conjugated modulo an amenable group
to the cocycle induced to $G$ from the derivative cocycle of $\Gamma\acts M$.
\end{mthm}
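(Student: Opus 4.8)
The plan is to deduce Theorem~\ref{T:lattices} from Theorem~\ref{T:product-of-T} (and its Variation) by the standard induction machinery for lattices in products. First I would set up the induced action: given the irreducible lattice $\Gamma<G=G_1\times\cdots\times G_n$ acting mixingly on $M$, the derivative cocycle $\ro\colon\Gamma\times M\to\GL_{\dim M}(\RR)$ induces to a measurable cocycle $\hat\ro\colon G\times (G/\Gamma\times M)\to \GL_{\dim M}(\RR)$ over the action of $G$ on $X=G/\Gamma\times M$. The integrability hypothesis is exactly what is needed to guarantee that the induced cocycle is again in the right integrability class so that the superrigidity input (Theorem~\ref{T:cocycle2}) applies; in the cocompact case this is automatic. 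The key point of irreducibility of $\Gamma$ is that each factor $G_i$ still acts ergodically — indeed mixingly — on $X$, which is the hypothesis needed to run the product superrigidity. One should also record that $G$ has property~(T) since each $G_i$ does, so the spectral-gap hypotheses are met.

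Next I would apply the cocycle superrigidity theorem to $\hat\ro$ over $G\acts X$. This produces the dichotomy: either $\hat\ro$ is virtually semi-conjugate, modulo an amenable group, to a continuous representation of $G$ into a product $H=H_1\times\cdots\times H_n$ of non-compact semisimple Lie groups with the $G_i$ mapping Zariski-densely to $H_i$ — giving in particular an unbounded continuous linear representation of $G$ — or else the cocycle is (measurably) isometric, i.e.\ takes values virtually in a compact group. The isometric alternative is what I would need to exclude under the mixing hypothesis: a measurably isometric induced cocycle would force the $G$-action on $X$, hence (after integrating out the $G/\Gamma$ fiber) the original $\Gamma$-action on $M$, to factor through an action with discrete spectrum, contradicting mixing of $\Gamma\acts M$ — here one uses that an infinite-dimensional isometric piece cannot be mixing, and a finite one is ruled out since $\dim M\neq 0$ forces nontriviality. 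Thus only the linear alternative survives, which yields the unbounded continuous real linear representation of $G$ and the asserted semi-conjugation of this representation to the induced derivative cocycle.

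Finally I would unwind the induction: the semi-conjugation statement is naturally phrased at the level of the cocycle over $G\acts G/\Gamma\times M$, but by the standard compatibility of induction with semi-conjugation one transports it to the stated form, namely that the $G$-representation is virtually semi-conjugated modulo an amenable group to the cocycle induced to $G$ from the derivative cocycle of $\Gamma\acts M$. The passage to finite-index subgroups $\Gamma_i^*$ appearing in Theorem~\ref{T:product-of-T} disappears here because continuous representations of the (connected-by-compact, or at least not-necessarily-discrete) factors $G_i$ need not be restricted to finite-index subgroups — one absorbs this in the general position/Zariski-density step, or alternatively one notes that a finite-index subgroup of $G_i$ that carries an unbounded representation forces $G_i$ itself to, after inducing the representation up.

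\medskip

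I expect the main obstacle to be the exclusion of the isometric alternative under mixing, and more precisely checking that mixing of $\Gamma\acts M$ genuinely propagates to an obstruction on the induced system $G\acts G/\Gamma\times M$: one must be careful that mixing is used for the right system and that the isometric conclusion of superrigidity — which a priori only says the cocycle is cohomologous into a compact group over $X$, not that the action has discrete spectrum — really does contradict it. A secondary technical point is verifying the integrability of the induced derivative cocycle from the integrability of the lattice, i.e.\ that $L^1$-control of the word length of the canonical cocycle together with boundedness of $\|\ro\|$ on the compact manifold suffices to place $\hat\ro$ in the integrability class demanded by Theorem~\ref{T:cocycle2}.
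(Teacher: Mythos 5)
Your overall strategy (induce the derivative cocycle to $G\acts G/\Gamma\ltimes M$, run the product cocycle superrigidity, and use mixing to kill the compact alternative) is the same as the paper's, but two steps would fail as you have set them up. The main one is exactly the obstacle you flag: you cannot exclude the ``isometric'' alternative by arguing that the induced $G$-action would have discrete spectrum. Zimmer's discrete-spectrum theorem applies to \emph{smooth} volume-preserving actions of Kazhdan groups on compact manifolds; the induced system $G\acts G/\Gamma\ltimes M$ is only a measurable action on a (generally non-compact) measure space, and in any case mixing is assumed only for $\Gamma\acts M$, not for the induced action. The paper instead rules out compactness \emph{before} inducing: it takes the algebraic hull $L$ of the derivative cocycle $\alpha$ of $\Gamma\acts M$ and shows $H=L^0/\Ramen(L^0)$ is non-compact, because otherwise $L$ would be amenable, property~(T) of $\Gamma$ would force the hull to be compact, so $\Gamma$ would preserve a measurable Riemannian metric on $M$, and Zimmer's theorem applied to the smooth action $\Gamma\acts M$ would give discrete spectrum, contradicting mixing. (If you insist on arguing after induction, you must transfer compactness back to the lattice level using the fact that induction does not change the algebraic hull --- Stuck's lemma, which the paper also invokes --- and then run the same argument; your ``integrate out the $G/\Gamma$ fiber'' step has no mechanism behind it as written.)

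The second gap is that you cannot literally ``apply Theorem~\ref{T:cocycle2}'' to the induced cocycle: that theorem demands the $(L^{\infty})$ condition, which the induced cocycle fails whenever $\Gamma$ is non-cocompact, since $c(g,\cdot)$ is unbounded in $\Gamma$. The paper's order of operations matters here: first conjugate $\alpha$ into its hull $L$ with a controlled conjugating map via Proposition~\ref{P:taming} (this uses $(L^\infty)$ and the spectral gap \emph{at the level of} $\Gamma\acts M$, both available there), \emph{then} induce, verify the $(L^1)$ condition for the induced cocycle from the integrability of the lattice (plus the linear control of $\ell$ by word length), and then rerun the \emph{proof} of Theorem~\ref{T:cocycle2} --- passing to $Y=X\ltimes(L/L^0)$ and invoking Theorem~\ref{T:cocycle1}, which only needs $(L^1)$. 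Your ``secondary technical point'' is therefore not just a verification but a reorganization of the argument. A smaller inaccuracy: ergodicity of each $G_i$ on the induced space does not follow from irreducibility alone; the paper uses irreducibility to get ergodicity of $\Gamma$ on $G/G_i$ and then the \emph{mixing} of $\Gamma\acts M$ to get ergodicity on $G/G_i\times M$, and your claim that the $G_i$ even act mixingly on the induced space is unsubstantiated and unnecessary.
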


The \emph{induced} cocycle in the statement refers to the following construction.
Let $G^*< G$ be a closed subgroup such that $G/G^*$
carries a $G$-invariant probability measure (\emph{e.g.} $G^*$ is a lattice in $G$,
or $G^*$ is closed of finite index). The \emph{canonical class}
is realized by the cocycles
\[
        c:G\times G/G^*\lra G^*,\qquad c(g,hG^*)=\sigma(ghG^*)^{-1}g\sigma(hG^*)
\]
where $\sigma:G/G^*\to G$ is a measurable cross-section;
different choices of $\sigma$ give rise to cohomologous cocycles.
To any $G^*$-space $X$ corresponds the \emph{induced} $G$-space $G/G^* \ltimes X$
with product measure and action $g(h G^*, x)=(gh G^*, c(g, h G^*)x)$. To any
cocycle $\alpha: G^*\times X\to H$, \emph{e.g.} a homomorphism $G^*\to H$, corresponds 
the \emph{induced cocycle}
$$G\times (G/G^* \ltimes X) \lra H,\qquad
(g,(h G^*, x)) \longmapsto \alpha(c(g, h G^*),x).$$
%

\subsection{Cocycle Superrigidity}
The main new tool in the proofs of the above results 
is a new cocycle superrigidity result for Lie-group-valued cocycles
of ergodically irreducible actions of product groups on probability spaces.
This result does not use property~(T) of the acting group, but the weaker
property of \emph{spectral gap} for the action.
Our cocycle superrigidity theorem also requires some integrability or boundedness condition
on the values of the cocycle, which can always be assumed
for derivative cocycles arising from actions on compact manifolds.

\smallskip

We first state a simpler version of this result, although for
the above applications we shall need the more general statement of
Theorem~\ref{T:cocycle2} below.
\begin{mthm}[Cocycle Superrigity --- Semi-Simple Hull]
\label{T:cocycle1}
Let $G=G_{1}\times\cdots \times G_{n}$ be a product of $n\ge 2$ locally compact second countable
groups with a measure-preserving action on a standard probability space 
$X$. Let $\alpha:G\times X\to H$ be a measurable cocycle
ranging in a connected centre-free semi-simple Lie group $H$
without compact factors. Assume:
\begin{itemize}
\item[{(Zd)}]
        $\alpha$ is Zariski-dense in $H$, i.e. it is not cohomologous 
        to a cocycle ranging in a proper algebraic subgroup of $H$;
\item[{(SG)}]
        Each of the actions $G_{i}\acts X$ is ergodic
        and has a spectral gap;
\item[($L^{1}$)]
        $\log\|\alpha(g,-)\|\in L^{1}(X)$ for very $g\in G$.
\end{itemize}
Then $\alpha$ is cohomologous to a continuous homomorphism $\ro: G\to H$.
\end{mthm}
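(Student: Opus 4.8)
The plan is to run a Margulis--Zimmer style boundary argument in which the product decomposition of $G$ substitutes for higher rank and the spectral gap hypothesis substitutes for property~(T). First I would reduce to $n=2$: writing $G=G_{1}\times G'$ with $G'=G_{2}\times\cdots\times G_{n}$, ergodicity and the spectral gap of $G'\acts X$ both follow from those of $G_{2}\acts X$, and a continuous homomorphism on $G_{1}\times G'$ is exactly what is wanted. I would then invoke the algebraic-hull formalism: by~(Zd), $\alpha$ admits no $\alpha$-equivariant measurable map to any homogeneous space $H/L$ with $L\subsetneq H$ algebraic, and the same remains true after replacing $X$ by the ergodic extensions that occur below whose pulled-back cocycle still has Zariski closure $H$ (that the hull is not lost under these extensions has to be checked, but is standard). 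Fix a minimal parabolic $P<H$ and write $B_{H}=H/P$ for the flag variety.

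Next, the boundary maps. Choosing admissible spread-out probability measures $m_{i}$ on $G_{i}$, let $B_{i}$ be the $(G_{i},m_{i})$-Poisson boundary, which is an amenable and ergodic $G_{i}$-space; then $B=B_{1}\times B_{2}$ is an amenable $G$-space. Since $G\acts X$ is measure-preserving and $G$ is not amenable, $X$ alone carries no boundary information, so one injects amenability through $B$: by the standard boundary-map construction, exploiting amenability of the $H$-action on $B_{H}$, there is an $\alpha$-equivariant measurable map $\phi\colon X\times B\to\Prob(B_{H})$. Because $G_{1}$ acts trivially on $B_{2}$ and $G_{2}$ trivially on $B_{1}$, disintegrating in the $B_{2}$- resp.\ $B_{1}$-variable gives, for a.e.\ parameter, equivariant maps $\phi^{b_{2}}\colon X\times B_{1}\to\Prob(B_{H})$ for $\alpha|_{G_{1}\times X}$ and $\phi^{b_{1}}\colon X\times B_{2}\to\Prob(B_{H})$ for $\alpha|_{G_{2}\times X}$.

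Now the algebraic input. By Furstenberg's lemma on stabilizers of measures on flag varieties, a probability measure on $B_{H}$ with Zariski-dense stabilizer is a Dirac mass, and in general the stabilizer of a non-Dirac measure sits in a proper parabolic. Feeding the Zariski density of $\alpha$ through the equivariance of $\phi$ and the two partial maps, and using that generic pairs of points of $B_{H}$ lie in general position (stabilizer a Levi subgroup $P\cap P^{-}$, which is a proper algebraic subgroup), I would show that every alternative to the desired conclusion manufactures an $\alpha$-equivariant map to some proper $H/L$, contradicting~(Zd). The lone surviving possibility is that, after a measurable coboundary on $X$, the restriction $\alpha|_{G_{i}\times X}$ ceases to depend on the $X$-variable, i.e.\ is cohomologous to a cocycle factoring through $B_{i}$; being still Zariski dense, such a cocycle is cohomologous to a continuous homomorphism $\rho_{i}\colon G_{i}\to H$. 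One then combines the two: the cocycle identity together with the commutation of the factors forces the images of $\rho_{1}$ and $\rho_{2}$ to commute, and a single transfer function $\psi\colon X\to H$ with $\psi(gx)\,\alpha(g,x)\,\psi(x)^{-1}=\rho_{1}(g_{1})\rho_{2}(g_{2})$ must be assembled from the two factor-wise transfer functions.

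The main obstacle is exactly the passage from ``boundary-theoretically trivial'' to ``cohomologous to a homomorphism'', and the matching of the two transfer functions --- this is where the spectral gap hypothesis must do the work that property~(T) does in Zimmer's argument. Concretely, one needs that a Zariski-dense cocycle $G_{i}\times X\to H$ factoring through the Poisson boundary is genuinely a coboundary-twist of a homomorphism, and that the residual ambiguity --- a priori a cocycle with values in a compact subgroup of $H$ --- is itself a coboundary; property~(T) would make the relevant ($L^{2}$- or compact-group-valued) cohomology vanish outright, and the content of the theorem is that the strong ergodicity coming from~(SG), together with the $L^{1}$-integrability of $\alpha$, already suffices for the specific classes that arise here. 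I would expect this last step, rather than the boundary-map construction or the algebraic lemmas, to carry the real weight of the proof.
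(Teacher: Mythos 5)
There is a genuine gap, and you have located it yourself: the passage from the boundary-theoretic reduction to ``cohomologous to a homomorphism'' is exactly where the spectral gap must do the work, and your proposal stops at the declaration that you ``expect this last step to carry the real weight of the proof.'' Worse, the specific way you hope to close it cannot work. You propose that the residual ambiguity --- a cocycle into a compact subgroup of $H$ over a strongly ergodic action --- should be a coboundary thanks to (SG) and ($L^1$). That statement is false in general: the arithmetic construction in Section~\ref{S:compact} of the paper produces, over actions of Kazhdan (hence spectral gap) product groups, cocycles that are Zariski-dense in a compact simple group and are not cohomologous to homomorphisms; this is precisely why the hypothesis that $H$ has no compact factors is indispensable, and any correct argument must use non-compactness in an essential, quantitative way. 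In addition, the intermediate step of your outline is itself not standard: for a \emph{measure-preserving} action of arbitrary lcsc groups there is no proximality or contraction on $X$, so the existence (by amenability) of a map $X\times B\to\Prob(H/P)$ does not by itself force $\alpha|_{G_i}$ to be cohomologous to a cocycle factoring through the Poisson boundary $B_i$, nor does Furstenberg's lemma alone convert the dichotomy you describe into a reduction of $\alpha$. Finally, your intended endgame (two homomorphisms $\rho_1,\rho_2$ with commuting Zariski-dense images in a simple $H$) is impossible; in the simple case exactly one factor can carry the homomorphism, which is part of what has to be proved.

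The paper's mechanism is different and is the missing idea. Using the splitting Proposition~\ref{P:CRL} and the hull analysis, one reduces (for $H$ simple) to excluding the possibility that \emph{both} restrictions $\alpha|_{G_1}$ and $\alpha|_{G'_1}$ have full algebraic hull $H$ (Proposition~\ref{P:H1H2H}). The spectral gap enters through Proposition~\ref{P:inv-in-Pm} and Lemma~\ref{L:no-inv-in-Pm}: either the cocycle reduces to a compact or virtually reducible subgroup, or the quasi-regular representation on the projective skew product $X\ltimes\PP^{d-1}$ has a spectral gap, and then a Kesten-type estimate yields the cocycle analogue of Furstenberg's theorem (Theorem~\ref{T:coc-Furstenberg}): the top Lyapunov exponent of $\alpha|_{G_1}$ along a random walk is strictly positive. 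Oseledets' theorem then produces a nontrivial measurable filtration $E_{\omega,2}(x)$, and Proposition~\ref{P:inv-of-Lyap} shows this filtration is equivariant under the commuting factor $G'_1$, so $\alpha|_{G'_1}$ is conjugate into the stabiliser of a proper subspace --- contradicting fullness of its hull. This is where non-compactness of $H$ is used (positivity of $\lambda_1$ fails for compact image), and it is the step your proposal leaves blank. An alternative, also sketched in the paper (Section~\ref{S:splitting}), replaces the random-walk argument by the \cat0 splitting theorem applied to the induced Hilbert manifold; either way, some substitute of this kind is needed, and the boundary-map framework you set up does not supply one.
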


\begin{rem}\label{R:details}
The assumption that $H$ has no compact factors is necessary, see Section~\ref{S:compact}.
On the other hand, our proof shows that one can drop the spectral gap assumption for
one of the factors $G_i$. This is also the case for the general form in Theorem~\ref{T:cocycle2} below.
\end{rem}

Our proof of Theorem~\ref{T:cocycle1} uses notably random walks and Oseledets'
theorem (an influence of Margulis' work~\cite{Margulis:announced:74},\cite{Margulis:ICM}).
In Section~\ref{S:splitting}, we also point to an alternative geometric proof using
the \cat0 techniques of~\cite{Monod:SuperSplitting:06}. We also mention that
S.~Popa established very general cocycle superrigidity results for products with spectral
gap~\cite{Popa07JAMS}. A very striking feature of his results is that they have no restriction
at all on the target groups. On the other hand, they are about a specific class of actions
and therefore cannot be used for our present purposes.

\medskip

The general result (Theorem~\ref{T:cocycle2} below) 
does not assume Zariski-density of the cocycle.
Therefore, its statement requires that we recall a few other basic constructions.  

\medskip

\noindent
\textbf{Radical.}
For any topological group $L$ one can define the \emph{amenable radical} 
$\Ramen(L)$ to be the maximal closed normal amenable subgroup in $L$
(in analogy to Zassenhaus' classical definition~\cite{Zassenhaus}).
If $L$ is a connected real algebraic group, then the quotient $L/\Ramen(L)$ 
can be obtained by dividing first by the soluble radical of $L$, 
then by the centre of the resulting reductive group, 
and then by the product of all the compact factors 
of the resulting connected semi-simple centre-free Lie group.

\medskip

\noindent
\textbf{Semi-conjugacy.}
Let $\alpha:G\times X\to V$ be a cocycle. If some conjugate of $\alpha$
ranges in a subgroup $L<V$ and $p:L\to H$
is (the canonical morphism to) a quotient group, consider the corresponding cocycle $\teta: G\times X\to H$.
We shall say that $\alpha$ is \emph{semi-conjugated} to $\teta$ modulo $\Ker(p)$.

\medskip

\noindent
\textbf{Hull.}
For a cocycle $\alpha:G\times X\to V$ ranging in an algebraic group $V$
(over $\RR$ in our case) there is a \emph{minimal} algebraic subgroup $L<V$
into which some conjugate $\beta=\alpha^{f}:G\times X\to L<V$ ranges;
if $G\acts X$ is ergodic,
this group $V$ is unique up to conjugacy and is called the
\emph{algebraic hull}~\cite[9.2]{Zimmer:book:84}.
The neutral component (connected component of the identity)
$L^{0}$ of $L$ is an algebraic subgroup of finite 
index in $L$. There is an ergodic finite extension $\pi:Y\to X$ of the original action
such that the lift $\tilde\beta:G\times Y\to L$ of $\beta$ is cohomologous
to a cocycle into $L^{0}$ (this construction from~\cite[9.2.6]{Zimmer:book:84}
is recalled in the proof of Theorem~\ref{T:cocycle2} below).

\medskip

We are ready to state the general result; for the application to Theorem~\ref{T:product-of-T},
one can assume that $G$ is discrete.
\begin{mthm}[Cocycle Superrigidity --- Unrestricted Hull]
\label{T:cocycle2}
Let $G=G_{1}\times \cdots \times G_{n}$ be a product of $n\ge 2$
locally compact second countable groups with a measure-preserving action on a
standard probability space $X$.
Let $\alpha:G\times X\to \GL_d(\RR)$ be a measurable cocycle. Assume:
\begin{itemize}
\item[{(SG)}]
        Each of the actions $G_{i}\acts X$ is ergodic
        and has a spectral gap;
\item[($L^{\infty}$)]
        $\|\alpha(g,-)\|\in L^{\infty}(X)$ for every $g\in G$.
\end{itemize}
Let $L$ be the algebraic hull of $\alpha$, 
$L^{0}\normal L$ the neutral component, and
$p:L^{0}\to H=L^{0}/\Ramen(L^{0})$ the quotient by the amenable radical.
Then we have:
\begin{enumerate}
\item
A splitting $H=H_{1}\times\cdots \times H_n$ into (possibly trivial)
connected centre-free semi-simple real Lie groups without compact factors;
\item
Finite index open subgroups $G_{i}^*< G_{i}$ and continuous homomorphisms
$\ro^*_{i}:G_{i}^*\to H_{i}$ with Zariski-dense image.
\end{enumerate}
Upon lifting $\alpha$ to a finite ergodic $G$-extension
$Y\to X$, it is semi-conjugated to the cocycle induced from the product
representation $\prod_i \ro^*_i:G^*\to H$ of $G^*=\prod_i G^*_i$.
\end{mthm}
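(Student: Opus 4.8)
The plan is to deduce Theorem~\ref{T:cocycle2} from the Zariski-dense semisimple case, Theorem~\ref{T:cocycle1}, by reducing the structure group of $\alpha$ to the semisimple quotient $H$ of its algebraic hull and then using the mutual commutation of the factors $G_i$ to split $H$ accordingly. First I would replace $\alpha$ by a cohomologous cocycle ranging in its algebraic hull $L<\GL_d(\RR)$. Projecting $L$ onto the finite group $L/L^0$ and invoking Zimmer's construction \cite[9.2.6]{Zimmer:book:84} produces a finite $G$-extension on which the lift of $\alpha$ is cohomologous to a cocycle into $L^0$; passing to an ergodic component (the $G$-action on $X$ is ergodic, each $G_i$ being so) gives a finite ergodic $G$-extension $Y'\to X$ and a cocycle $\beta_0:G\times Y'\to L^0$, cohomologous to the lift of $\alpha$, whose algebraic hull is $L^0$. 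Composing with $p:L^0\to H=L^0/\Ramen(L^0)$ yields $\beta:G\times Y'\to H$; by the description of $L^0/\Ramen(L^0)$ in the ``Radical'' paragraph, $H$ is connected, centre-free, semisimple and without compact factors, and $\beta$ is Zariski-dense in $H$ because $p$ is surjective and $L^0$ is the hull of $\beta_0$. By construction $\beta$ is the semi-conjugate of $\alpha$ modulo $\Ramen(L^0)$.

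On $Y'$ the individual factors $G_i$ need not act ergodically, so Theorem~\ref{T:cocycle1} cannot be applied directly. Using Mackey's theory of the finite-group-valued cocycle governing the extension, I would pass to finite index open subgroups $G_i^*<G_i$, set $G^*=\prod_i G_i^*$, and select an ergodic component $Y_0$ of $G^*\acts Y'$ on which \emph{each} $G_i^*$ acts ergodically. Spectral gap is preserved both by finite extensions and by restriction to finite index subgroups, so $G_i^*\acts Y_0$ still has a spectral gap; and the standard behaviour of the algebraic hull under such restrictions (using that $H$ is connected) shows $\beta$ is still Zariski-dense in $H$ for $G^*\acts Y_0$. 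Theorem~\ref{T:cocycle1} now applies and gives a cohomology of $\beta|_{Y_0}$ to a continuous homomorphism $\ro^*:G^*\to H$.

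It remains to split $H$ and globalize. Since the $G_i^*$ pairwise commute in $G^*$, the Zariski closures $H_i:=\overline{\ro^*(G_i^*)}$ pairwise commute in $H$; a short induction --- two commuting algebraic subgroups of a centre-free semisimple group that together Zariski-generate it meet only in the centre, hence trivially, and so split off as a direct factor --- produces a splitting $H=H_1\times\cdots\times H_n$ into connected centre-free semisimple groups without compact factors, possibly trivial, with $\ro^*=(\ro^*_1,\dots,\ro^*_n)$ and each $\ro^*_i:G_i^*\to H_i$ continuous with Zariski-dense image; this is conclusions (1) and (2). For the final assertion I would take as the required extension the induced space $Y:=G/G^*\ltimes Y_0$, which is ergodic and is a finite extension of $X$ (it projects $G$-equivariantly onto $X$ through $G/G^*\ltimes X$). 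By functoriality of induction the pullback of $\alpha$ to $Y$ is induced from its pullback to $Y_0$; hence, after semi-conjugation modulo $\Ramen(L^0)$, it is cohomologous to the cocycle induced from $\beta|_{Y_0}\sim\ro^*=\prod_i\ro^*_i$, that is, to the cocycle induced from the product representation $\prod_i\ro^*_i:G^*\to H$, as claimed.

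The main obstacle I anticipate is in the reduction step rather than in the semisimple-group manipulations: one must check that the integrability hypothesis $(L^1)$ of Theorem~\ref{T:cocycle1} survives the reductions. An algebraic quotient such as $p$, and a restriction to a positive-measure subset or to a subgroup, all preserve log-integrability, starting from the hypothesis $(L^\infty)$ on $\alpha$, so the crux is to realize the reduction of $\alpha$ into $L$ by a cohomology whose conjugating function is log-integrable --- a tempered (moderate-growth) reduction rather than an arbitrary measurable one. A secondary delicate point is the Mackey-theoretic construction of the subgroups $G_i^*$ and of a common ergodic component $Y_0$ on which all of them act ergodically, together with the accompanying verification that Zariski-density of $\beta$ is not destroyed.
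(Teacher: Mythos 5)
Your route is essentially the paper's: conjugate $\alpha$ into its hull $L$, pass to the finite ergodic extension by $L/L^{0}$ as in \cite[9.2.6]{Zimmer:book:84}, divide by the amenable radical, restrict to $G^*=\prod_i G_i^*$ acting on a component where every factor is ergodic, apply Theorem~\ref{T:cocycle1}, and recover the theorem by inducing back from $G^*$ via $Y\to G/G^*$. The one point where you genuinely deviate --- extracting the splitting $H=H_{1}\times\cdots\times H_{n}$ a posteriori from the pairwise commuting Zariski closures $\overline{\ro^*(G_i^*)}$ --- is legitimate (and necessary if one only uses the \emph{statement} of Theorem~\ref{T:cocycle1}); the paper instead obtains the splitting inside the proof of that theorem, in Section~\ref{S:s-s:hull}, by treating each simple factor of $H$ separately.

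However, the two points you defer are exactly where the paper invests nontrivial work, and as written they are gaps rather than routine verifications. The ``tempered reduction'' you flag as the crux is Proposition~\ref{P:taming}: under (SG) and ($L^{\infty}$) one can choose the conjugating map into the hull with $\|f\|\in L^{\delta}(X)$ for some $\delta>0$, so that the resulting cocycle into $L^{0}$ on the finite extension satisfies ($L^{1}$). This is not formal: its proof uses the spectral gap through an exponential exhaustion argument (Lemma~\ref{L:growth}) to propagate a bound on $\|f\|$ from a set of measure $1/2$ to a conull set with geometrically decaying exceptional measure, balanced against the essential bound of $\|\alpha(g,\cdot)\|$ on a compact set; an arbitrary measurable conjugation into $L$ gives no integrability at all, and without ($L^{1}$) Theorem~\ref{T:cocycle1} cannot be invoked. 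Likewise, your appeal to ``Mackey theory'' for producing the $G_i^*$ and a common ergodic component is the content of Lemma~\ref{L:joint-part}: one must prove that the partitions into $G_i$-ergodic components are independent, that $G$ permutes the joint partition transitively so that the stabiliser of a piece $Z$ is a product $G^*=\prod_i G_i^*$ of finite index subgroups, and --- the part requiring an actual argument --- that each $G_i^*$, not merely $G^*$, acts ergodically on $Z$. Your remaining assertion, that Zariski density in $H$ survives the restriction to $G^*\times Z$, is correct but deserves the justification the setup provides: the cocycle on $G\times Y$ is induced from its restriction to $G^*\times Z$, and induction does not change the algebraic hull (Stuck's lemma, as used in Section~\ref{S:lattices}). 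So the skeleton matches the paper, but the proposal leaves unproved precisely the two lemmas that make it work.
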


(The conclusion of this theorem is a precise formulation of what we called \emph{virtual semi-conjugacy} in Theorem~\ref{T:product-of-T} above.)

\subsection{Further comments}\label{sec:further}
There are three basic algebraic examples of volume-preserving, ergodically irreducible actions on homogenous
compact manifolds \emph{via} linear representations: semi-simple, nilpotent and compact.

\medskip
\noindent
\textbf{Semi-simple.}
Let $\Gamma=\Gamma_{1}\times\cdots\times\Gamma_{n}$,
where each $\Gamma_{i}$ admits a representation $\Gamma_{i}\to H_{i}$ 
with unbounded image in a real semi-simple Lie group $H_{i}$,
and let $H=H_{1}\times\cdots\times H_{n}$.
Embed $H$ into some real semi-simple 
Lie group $L$ (\emph{e.g.} $L=\SL_{N}(\RR)$ with $N\ge\sum \dim Lie(H_{i})$)
and choose an irreducible cocompact lattice $\Lambda$ in $L$.
Then the $\Gamma$-action on $M=L/\Lambda$ by left translations \emph{via} $\Gamma\to H<L$
is ergodically irreducible, and even mixing (Howe-Moore theorem).

\medskip
\noindent
\textbf{Nilpotent.}
Let $N$ be a connected real nilpotent group with a lattice $\Lambda<N$;
such lattices are always cocompact and arithmetic by a theorem of Mal'cev, see \emph{e.g.}~\cite{Malcev51}
and~\cite{Raghunathan}. If a group $\Gamma$ admits a homomorphism $\rho:\Gamma\to\Aut(N)$
where $\rho(\Gamma)$ normalizes $\Lambda$, one obtains a volume-preserving $\Gamma$-action
on the compact nil-manifold $M=N/\Lambda$. 
The standard $\SL_d(\ZZ)$-action on the torus $\TT^d$ is a prime example of this setting, and indeed it is the critical
case to investigate ergodic irreducibility since any compact nil-manifold admits such a torus as an equivariant quotient.
Let thus $\Gamma=\Gamma_{1}\times\cdots\times\Gamma_{n}$ with infinite homomorphims $\rho_i:\Gamma_i\to \SL_{d_i}(\ZZ)$.
Upon passing to finite index subgroups and reducing the dimensions $d_i$ one may assume that 
$\ro_i(\Gamma_i^*)$ is irreducible over $\QQ$ for all finite index subgroups $\Gamma_i^*<\Gamma_i$.
Let $\ro=\ro_1\otimes\cdots\otimes \ro_n$ be the tensor representation taking values
in $\SL_d(\ZZ)$ where $d=\prod d_i$.
Then the corresponding $\Gamma$-action on $M=\TT^d$ is ergodically irreducible.
Indeed, a Fourier transform argument shows that the ergodicity of $\Gamma_i$ is equivalent
to $\ro(\Gamma_i^*)$ having no invariant vectors 
in $\QQ^d\setminus\{0\}$ for finite index subgroups $\Gamma_i^*<\Gamma_i$, which follows by construction.
Note that the dimension $d=\dim(M)$ will typically exceed that of semi-simple examples
and in addition the required linearity assumption is more stringent: $\Gamma_i$ must have unbounded 
representations defined over $\ZZ$, rather than over $\RR$. 

\medskip
\noindent
\textbf{Compact (isometric).}
Let $\Gamma=\Gamma_{1}\times\Gamma_{2}$ and suppose there are
homomorphisms $\tau_{i}:\Gamma_{i}\to K$ with dense images in some compact group $K$.
Then the $\Gamma$-action on $M=K$ given by
$$(g_{1},g_{2}) k\ =\ \tau_{1}(g_{1})\,k\,\tau_{2}(g_{2})^{-1}
        \qquad (g_{i}\in\Gamma_{i},\ k\in K)$$
is ergodically irreducible. If $K$ is moreover a connected Lie group, we have
an ergodically irreducible volume-preserving action on a manifold, and this action is even isometric.

\bigskip

An example where Theorem~\ref{T:product-of-T} rules out altogether certain actions is as follows.

\begin{cor}\label{C:non-linear}
Let $\Gamma=\Gamma_{1}\times\cdots\times\Gamma_{n}$ be a product of $n\geq 2$
infinite groups with property~(T), where one of the factors 
does not admit unbounded real linear representations.

Then $\Gamma$ has no smooth mixing volume-preserving actions on compact manifolds.
\end{cor}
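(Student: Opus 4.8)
The plan is to read the corollary off Theorem~\ref{T:product-of-T}. Suppose, towards a contradiction, that $\Gamma$ admits a smooth mixing volume-preserving action on a compact manifold $M$; we may assume $\dim M\ge 1$. The first point is that mixing passes to every infinite subgroup: if $g_n\to\infty$ inside a fixed factor $\Gamma_i$ then $g_n\to\infty$ in $\Gamma$, so $\mu(g_nA\cap B)\to\mu(A)\mu(B)$. Since each $\Gamma_i$ is infinite, each $\Gamma_i$ therefore acts mixingly, in particular ergodically; thus the action is ergodically irreducible and Theorem~\ref{T:product-of-T} applies, yielding the alternative (Lin) or (Ism). I would then eliminate (Ism) using mixing, and (Lin) using the non-linearity hypothesis.

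To rule out (Ism): by (${\rm Ism}_3$) the action would be measurably isomorphic to the bilateral action of $\Gamma=\Gamma_1\times\Gamma_2$ on a compact group $K$, and $K$ is non-trivial since a mixing action on the non-atomic space $M$ is not measurably trivial. Restricting to $\Gamma_1\times\{e\}$, the pair $(h,e)$ acts on $K$ as left translation by $\tau_1(h)$. As $\tau_1(\Gamma_1)$ is relatively compact in $K$, any $h_n\to\infty$ in $\Gamma_1$ has a subsequence with $\tau_1(h_{n_j})\to k_0\in K$; continuity of translation on $L^1(K,\mathrm{Haar})$ gives $\mu\big((h_{n_j},e)A\cap B\big)\to\mu(k_0A\cap B)$, so mixing would force $\mu(k_0A\cap B)=\mu(A)\mu(B)$ for all measurable $A,B$. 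Taking $B=k_0A$ and using invariance of Haar measure yields $\mu(A)=\mu(A)^2$ for every $A$, which is absurd on the non-trivial space $K$. Hence (Ism) cannot occur.

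To rule out (Lin): by (${\rm Lin}_1$) each factor $\Gamma_i$ has a finite index subgroup $\Gamma_i^*$ carrying a Zariski-dense homomorphism $\rho_i\colon\Gamma_i^*\to H_i$ into a connected non-compact semi-simple linear Lie group. Its image is unbounded, because a Zariski-dense subgroup of the non-compact $H_i$ cannot be relatively compact (the Zariski closure of a relatively compact linear group is again relatively compact, hence a proper subgroup of $H_i$). Composing with a faithful linear representation of $H_i$ gives an unbounded real linear representation of $\Gamma_i^*$, and inducing it from $\Gamma_i^*$ to $\Gamma_i$ gives an unbounded real linear representation of $\Gamma_i$ (its restriction to $\Gamma_i^*$ contains the original). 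So every $\Gamma_i$ --- in particular the factor assumed to have no unbounded real linear representation --- admits one, a contradiction. Both alternatives of Theorem~\ref{T:product-of-T} being excluded, no such action exists.

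There is no genuine obstacle here: the corollary is little more than a restatement of Theorem~\ref{T:product-of-T}. The only steps calling for a word of justification are that mixing descends to the diagonal factors (so that the theorem applies), that a translation action of a dense subgroup on a non-trivial compact group is never mixing, and that passing to a finite index subgroup and inducing does not destroy an unbounded linear representation.
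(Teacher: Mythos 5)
Your proposal is correct and follows essentially the same route as the paper: mixing plus infiniteness of the factors gives ergodic irreducibility so Theorem~\ref{T:product-of-T} applies, the (Ism) alternative is incompatible with mixing (the paper phrases this as mixing precluding discrete spectrum, you verify directly that translations on a compact group are never mixing), and (Lin) contradicts the non-linearity hypothesis via the standard finite-index/unbounded-image fact. The extra details you supply (the subsequence argument on $K$, the induction of representations from $\Gamma_i^*$ to $\Gamma_i$) are just expansions of steps the paper leaves as recalled facts.
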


Mixing is a convenient strengthening of ergodic irreducibility, but the latter
more natural assumption suffices in various cases. For instance, the statement of
Theorem~\ref{T:product-of-T} shows that ergodic irreducibility suffices in
Corollary~\ref{C:non-linear} if in addition $n\geq 3$, or if $\Gamma_{1}$ and
$\Gamma_{2}$ cannot not embed densely in the same compact group. Further:

\begin{cor}\label{C:no-actions}
Let $\Gamma=\Gamma_{1}\times\cdots\times\Gamma_{n}$ be a product of $n\geq 2$
groups with property~(T), where one of the factors admits only finitely many finite quotients.

Then $\Gamma$ has no ergodically irreducible smooth volume-preserving
actions on compact manifolds of non-zero dimension.
\end{cor}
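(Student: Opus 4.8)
The plan is to reduce to Theorem~\ref{T:product-of-T} and show that both horns of the dichotomy collapse. So suppose $\Gamma = \Gamma_1 \times \cdots \times \Gamma_n$ with each $\Gamma_i$ Kazhdan, $\Gamma_1$ say having only finitely many finite quotients, and assume for contradiction that $\Gamma$ has an ergodically irreducible smooth volume-preserving action on a compact manifold $M$ of non-zero dimension. Apply Theorem~\ref{T:product-of-T}. In the isometric alternative (Ism), conclusion~(Ism$_2$) gives a homomorphism $\tau_1 : \Gamma_1 \to K$ with dense image in a compact group $K$; since $\Gamma_1$ is Kazhdan, $\tau_1(\Gamma_1)$ is a Kazhdan subgroup of $K$, hence finite (a Kazhdan group with a finite-dimensional-unitary-dense-ish image is compact, and a discrete Kazhdan group that is compact is finite), and by density $K$ is finite. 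Then the action of $\Gamma$ on $M \cong K$ is through a finite group, contradicting that $\dim(M) \neq 0$ forces an infinite image for an ergodic action — or more directly, an ergodic action of a finite group on a positive-dimensional manifold preserving volume is impossible since orbits are finite. (I should double-check the precise packaging here; the cleanest route is: $K$ compact, $\Gamma_1$ Kazhdan dense in $K$ $\Rightarrow$ $K$ Kazhdan; but $K$ must also be a connected Lie group or at least acts with dense orbits, and having the bilateral action measurably isomorphic to $\Gamma \acts K$ with $K$ finite contradicts positive-dimensionality of $M$.)

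The real content is the linear alternative (Lin). There, conclusion~(Lin$_1$) provides a finite index subgroup $\Gamma_1^* < \Gamma_1$ and a Zariski-dense homomorphism $\Gamma_1^* \to H_1$ into a connected non-compact semi-simple Lie group $H_1$; in particular $H_1 \neq 1$, so $\Gamma_1^*$ surjects Zariski-densely onto a nontrivial semi-simple group. First I would argue that $\Gamma_1$ itself (not just a finite index subgroup) has an infinite image in a semi-simple Lie group: replace $\Gamma_1$ by the normal core of $\Gamma_1^*$, which is still finite index, and if $\Gamma_1$ had only finitely many finite quotients then so does this core's ambient group in the relevant sense. The key point is that $H_1$ non-compact semi-simple contains lattices, and a Zariski-dense Kazhdan subgroup of a non-compact semi-simple Lie group is infinite; in fact such a group $\Gamma_1^*$ maps onto arithmetic-type quotients — more elementarily, a non-compact connected semi-simple Lie group $H_1$ has infinitely many non-isomorphic finite simple quotients of any of its lattices, or even more simply, $H_1$ itself has $\SL_2(\RR)$ or $\mathrm{PSL}_2(\CC)$ as a subquotient and a Zariski-dense subgroup therein has infinitely many finite quotients (via congruence-type or via free subgroups by Tits alternative — a Zariski-dense subgroup of a non-compact semi-simple group contains a non-abelian free group, which has infinitely many finite quotients).

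So the decisive lemma is: \emph{a finitely generated group with a Zariski-dense homomorphism onto a nontrivial connected semi-simple Lie group has infinitely many finite quotients.} This follows from the Tits alternative: Zariski-density in a non-compact semi-simple group forces the image to contain a non-abelian free group $F_2$ (Tits), and $F_2$ — indeed any group containing $F_2$ — has infinitely many finite quotients. Pulling back, $\Gamma_1^*$ has infinitely many finite quotients, and since $\Gamma_1^*$ is finite index in $\Gamma_1$, so does $\Gamma_1$: distinct finite quotients of a finite-index subgroup yield, via the finitely many double cosets and taking the quotient by the normal core of the kernel, infinitely many finite quotients of $\Gamma_1$ (a finite-index subgroup of a group with finitely many finite quotients has finitely many finite quotients, by intersecting kernels with the core). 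This contradicts the hypothesis on $\Gamma_1$. Hence neither alternative survives, proving the corollary.

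I expect the main obstacle to be the bookkeeping of "finitely many finite quotients" under passage to finite-index subgroups and the precise extraction of $F_2$ from Zariski-density when $H_1$ might have compact and non-compact parts mixed at the level of the actual image — but (Lin$_1$) already gives $H_1$ \emph{non-compact} connected semi-simple, so Tits applies cleanly; the finite-index bookkeeping is routine (use normal cores and the fact that a subgroup of finite index in a group with finitely many finite quotients again has finitely many finite quotients, since $[\,G : \bigcap_{g} K^g\,] $ is bounded in terms of $[G:K]$ and $[K : N]$ for $N \triangleleft K$).
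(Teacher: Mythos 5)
Your treatment of the isometric alternative contains a genuine error. You claim that since $\Gamma_1$ is Kazhdan, a dense image $\tau_1(\Gamma_1)$ in a compact group $K$ must be finite. This is false: Kazhdan groups can embed densely in infinite compact groups — $\SL_3(\ZZ)$ is dense in $\SL_3(\ZZ_p)$, and the paper's own Section~\ref{S:compact} constructs a Kazhdan arithmetic group $\mathbf{G}(\O_F)$ with dense image in $K\cong\SO_5(\RR)$. Indeed the (Ism) alternative genuinely occurs for products of Kazhdan groups, so no argument that ignores the hypothesis ``finitely many finite quotients'' can dispose of it — and your (Ism) branch never invokes that hypothesis. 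The paper's route is: property~(T) makes $\Gamma_1$ finitely generated, so by Mal'cev (finitely generated linear groups are residually finite) every \emph{linear} image of $\Gamma_1$ is finite, given that $\Gamma_1$ has only finitely many finite quotients; Peter--Weyl embeds $K$ into a product of unitary groups $U_n$, the image of $\Gamma_1$ in each $U_n$ is finite, hence by density so is the image of $K$, hence $K$ is profinite and then finite; this contradicts the measurable isomorphism of $K$ with the positive-dimensional (non-atomic) manifold $M$.

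In the linear alternative, your intended conclusion is right but the key justification is wrong: it is not true that ``any group containing $F_2$ has infinitely many finite quotients'' (an infinite simple linear group over $\QQ$ contains free subgroups and has no nontrivial finite quotients); containment of a subgroup gives no information about quotients of the ambient group, so the Tits-alternative detour proves nothing. What is true, and what you should say, is that the image of $\Gamma_1^*$ is an infinite finitely generated linear group, hence residually finite by Mal'cev, hence has finite quotients of unbounded order, and these pull back to $\Gamma_1^*$; your finite-index bookkeeping via normal cores (bounding $[\Gamma_1:\mathrm{Core}_{\Gamma_1}(N)]$ by $[\Gamma_1:N]!$) then correctly transfers the contradiction to $\Gamma_1$. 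The paper shortcuts this by the remark, recorded just before the corollaries, that a finite-index subgroup with an infinite (or unbounded) linear image forces the ambient group to have one too, after which Mal'cev applies directly to $\Gamma_1$. So the (Lin) half is repairable with the same ingredient the paper uses, but the (Ism) half needs to be replaced along the lines above.
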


We recall in Section~\ref{S:cors} that there are many infinite groups with Kazhdan's property~(T) that do not admit
unbounded linear representations over $\RR$ and some are known not to have finite quotients.

\subsection*{Acknowledgments}
This article owes much to the work of H.~Furstenberg, G.~Margulis and R.~Zimmer.
We are grateful to D.~Fisher and the anonymous referee for interesting and useful remarks.

\section{General Notations}
\label{S:notations}
Throughout the paper, a \emph{lcsc group} will mean a locally compact second countable
topological group. We denote by $m_G$ a choice of left Haar measure and recall that its
measure class depends only on $G$. If $G$ is a lcsc group, a \emph{probability $G$-space}
refers to a standard (Lebesgue) probability space endowed with a measurable $G$-action that
preserves the measure. All (ergodic-theoretical) cocycles are assumed measurable. If
$\alpha:G\times X\to H$ is a cocycle and $f:X\to H$ a measurable map, the corresponding cocycle
$\alpha^f$ \emph{cohomologous} (or \emph{conjugated}) to $\alpha$ is defined by
$$\alpha^f(g, x)\ =\ f(g x)^{-1} \alpha(g, x) f(x).$$
For any subgroup $L<G$, we denote simply by $\alpha|_L$ the restriction of $\alpha$
to $L\times X$. If $Y$ is some $H$-space, one defines the \emph{skew product}
$G$-space $X\ltimes Y$ by endowing the product $X\times Y$ with the action $g(x,y)=(g x, \alpha(g,x) y)$.
Notice that \emph{induced actions}, as defined in the introduction, are a special case
of this construction.

\smallskip

A \emph{norm} on a group $H$ will mean a map $\|\cdot\|: H\to [1,
\infty)$ such that $\|gh\|\leq \|g\|\cdot\|h\|$ and $\|g^{-1}\|=\|g\|$
for all $g,h\in H$; by default, we think of $\GL_d(\RR)$ as endowed
with $\|g\|=\|g\|_\mathrm{op}\cdot\|g^{-1}\|_\mathrm{op}$, wherein
$\|\cdot\|_\mathrm{op}$ is the operator norm. (The terminology
conflicts of course with normed vector spaces.)  The following fact is well-known both
in the measurable setting for lcsc groups and for Borel norms on Baire topological groups:

\begin{lem}
\label{L:bdd}
Any measurable norm on a lcsc groups is bounded on compact sets.\hfill\qedsymbol
\end{lem}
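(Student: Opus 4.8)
The plan is to reduce the statement to the Steinhaus--Weil theorem on products of positive-measure sets. Fix a left Haar measure $m_H$ on $H$ and consider, for $n\in\NN$, the sublevel sets
$$A_n\ =\ \{g\in H:\|g\|\le n\},$$
which are measurable by hypothesis, increase with $n$, and whose union is all of $H$ since $\|\cdot\|$ is finite-valued. As $m_H(H)>0$ (a non-empty open set has positive Haar measure, and $H$ is open in itself), some $A_n$ has positive Haar measure; by inner regularity of $m_H$ we may then fix a compact set $C\subseteq A_n$ with $0<m_H(C)<\infty$.

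Next I would use the two defining inequalities of a norm: symmetry $\|g^{-1}\|=\|g\|$ gives $A_n=A_n^{-1}$, and submultiplicativity gives $A_n\cdot A_n\subseteq A_{n^2}$. Since $m_H(C)>0$, the Steinhaus--Weil theorem yields an open neighbourhood $U\ni e$ with
$$U\ \subseteq\ C\cdot C^{-1}\ \subseteq\ A_n\cdot A_n^{-1}\ =\ A_n\cdot A_n\ \subseteq\ A_{n^2};$$
in other words $\|u\|\le n^2$ for every $u\in U$.

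Finally, let $K\subseteq H$ be compact. Cover $K$ by finitely many translates $g_1U,\dots,g_kU$ with $g_i\in K$. For $g\in K$ write $g=g_iu$ with $u\in U$; then $\|g\|\le\|g_i\|\,\|u\|\le\big(\max_{1\le i\le k}\|g_i\|\big)\,n^2$. Since the $\|g_i\|$ are finite, $\sup_{g\in K}\|g\|<\infty$, as claimed.

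The only substantive input is the Steinhaus--Weil theorem; in the purely Borel setting over a Baire topological group one instead invokes Pettis' theorem (a non-meager set with the Baire property has $AA^{-1}$ a neighbourhood of $e$), and the remainder of the argument is unchanged. I do not expect a genuine obstacle here: this is essentially the classical proof that a measurable homomorphism to $\RR$ is locally bounded, adapted to the submultiplicative setting.
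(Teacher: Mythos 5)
Your proof is correct, and it is exactly the classical Steinhaus--Weil argument (sublevel sets $A_n=\{\|g\|\le n\}$, one of positive Haar measure, hence $A_nA_n^{-1}\subseteq A_{n^2}$ contains a neighbourhood of $e$, then cover a compact set by finitely many translates). The paper states this lemma without proof, as a well-known fact, so there is nothing to compare against; your remark about replacing Steinhaus--Weil by Pettis' theorem in the Baire/Borel setting matches precisely the paper's parenthetical comment about Borel norms on Baire topological groups.
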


\smallskip

We write $\GL^1_d< \GL_d$ for the subgroup of determinant $\pm 1$.

\smallskip

Let $G$ be a lcsc group and $(X,\mu)$ an ergodic probability $G$-space.
We say that the $G$-action has a \emph{spectral gap} if the $G$-representation
on
$$L^{2}_{0}(X)=L^2(X,\mu)\ominus\CC=\Big\{f\in L^2(X,\mu)\ :\ \int f\d\mu=0\Big\}$$
does not almost have invariant vectors. This representation is the (Koopman) 
representation given by $gf=f\circ g^{-1}$. Explicitly, the spectral gap means that
there is a compact subset $K\subset G$ and an $\epsilon>0$ such that
\begin{equation}
\label{e:gap}
        \forall\, F\in L^{2}_{0}(X),\ \exists\, g\in K:\qquad \|g F - F\|\ge \epsilon\cdot \|F\|.
\end{equation}
The action $G\acts X$ is called \emph{mixing} if the matrix coefficients of the representation
$L^{2}_{0}(X)$ are $C_0$ (vanish at infinity).

\smallskip

We use $\succ$ for the weak containment of unitary representations;
the trivial representation of a lcsc group $G$ is denoted by $\one_G$. Thus for instance
the above spectral gap property is $L^2_0(X)\not\succ\one_G$. This
terminology is due to the following classical characterization
\emph{\`a la} Kesten for arbitrary unitary representations $\pi$, see
\emph{e.g.}~\cite[G.4.2]{Bekka-Harpe-Valette}.

\begin{lem}\label{l:kesten}
Let $\sigma$ be a probability measure on $G$ that is absolutely continuous with respect to Haar
measures and whose support generates $G$. Then $\pi\not\succ\one_G$ if and only if
the spectral radius of $\pi(\sigma)$ satisfies $\|\pi(\sigma)\|_\mathrm{sp}<1$.\hfill\qedsymbol
\end{lem}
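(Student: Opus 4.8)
The plan is simply to recall the classical argument (this is the content of~\cite[G.4.2]{Bekka-Harpe-Valette}). Let $\mathcal H$ be the space of $\pi$. Since $\sigma$ is a probability measure and each $\pi(g)$ is unitary, $\pi(\sigma)=\int_G\pi(g)\d\sigma(g)$ is a contraction and $\pi(\sigma^{*n})=\pi(\sigma)^n$; by Gelfand's spectral-radius formula together with the submultiplicativity of $n\mapsto\|\pi(\sigma^{*n})\|$, one has $\|\pi(\sigma)\|_{\mathrm{sp}}=\inf_n\|\pi(\sigma^{*n})\|^{1/n}$, so that $\|\pi(\sigma)\|_{\mathrm{sp}}<1$ precisely when $\|\pi(\sigma^{*n})\|<1$ for some~$n$. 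After the routine reduction to a symmetric (and aperiodic) measure — at which point $\pi(\sigma)$ is self-adjoint with $\|\pi(\sigma)\|_{\mathrm{sp}}=\|\pi(\sigma)\|$, and $1$ is its only possible spectral value of modulus one — the lemma becomes the assertion that $\pi$ almost has invariant vectors (i.e.\ $\pi\succ\one_G$) if and only if $1\in\mathrm{spec}\,\pi(\sigma)$.

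One direction is immediate. If $(\xi_k)$ are unit vectors with $\|\pi(g)\xi_k-\xi_k\|\to 0$ uniformly on compacta, hence for each fixed $g$, then by dominated convergence (the integrand is $\le 4$ and $\sigma^{*n}$ is finite) and the identity $\mathrm{Re}\langle\pi(g)\xi,\xi\rangle=1-\tfrac12\|\pi(g)\xi-\xi\|^2$, one gets $\mathrm{Re}\langle\pi(\sigma^{*n})\xi_k,\xi_k\rangle=1-\tfrac12\int_G\|\pi(g)\xi_k-\xi_k\|^2\d\sigma^{*n}(g)\to 1$; since $\|\pi(\sigma^{*n})\|\le 1$, this forces $\|\pi(\sigma^{*n})\|=1$ for all $n$, i.e. $\|\pi(\sigma)\|_{\mathrm{sp}}=1$, in particular $1\in\mathrm{spec}\,\pi(\sigma)$.

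For the converse I would start from approximate eigenvectors: unit $\xi_k$ with $\pi(\sigma)\xi_k\to\xi_k$. As $\sigma$ is symmetric, $\langle\pi(\sigma)\xi_k,\xi_k\rangle$ is real and $\int_G\|\pi(g)\xi_k-\xi_k\|^2\d\sigma(g)=2(1-\langle\pi(\sigma)\xi_k,\xi_k\rangle)\to 0$; the inequality $\|\pi(gh)\xi-\xi\|^2\le 2\|\pi(g)\xi-\xi\|^2+2\|\pi(h)\xi-\xi\|^2$ then propagates this to $\int_G\|\pi(g)\xi_k-\xi_k\|^2\d\sigma^{*j}(g)\to 0$ for every $j\ge 1$. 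To conclude that $\pi$ almost has invariant vectors one must produce, for every compact $K\subseteq G$ and $\epsilon>0$, a unit vector that is almost invariant to within $\epsilon$ uniformly on $K$, and this is the step I expect to be the main obstacle. It relies on the standard fact that an adapted, spread-out measure has some convolution power $\sigma^{*j}$ dominating a positive multiple of Haar measure on a neighbourhood of $K$, which converts the $L^2(\sigma^{*j})$-smallness above into $\int_K\|\pi(g)\xi_k-\xi_k\|^2\d m_G(g)\to 0$; then averaging $\pi$ over a symmetric Haar-neighbourhood of $e$ and using $|\langle\pi(g)\xi,\xi\rangle-\langle\pi(h)\xi,\xi\rangle|\le\|\pi(h^{-1}g)\xi-\xi\|$ yields the required uniform bound along a subsequence. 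The reductions glossed over above — symmetrisation, aperiodicity, and the modular-function bookkeeping when $G$ is non-unimodular — are routine but fiddly, which is precisely why one may as well invoke~\cite[G.4.2]{Bekka-Harpe-Valette} outright.
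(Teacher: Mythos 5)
Your proposal is correct and matches the paper, which itself gives no argument beyond citing the classical fact~\cite[G.4.2]{Bekka-Harpe-Valette}: your outline (the easy direction via $\mathrm{Re}\ip{\pi(g)\xi}{\xi}=1-\tfrac12\|\pi(g)\xi-\xi\|^{2}$ and dominated convergence, the converse via symmetrisation, propagation of the $L^{2}(\sigma^{*j})$-estimate, and the spread-out/adapted property to upgrade to uniform almost-invariance on compacta by averaging over a neighbourhood of $e$) is precisely the standard proof. The aperiodicity caveat you flag is genuinely needed for the literal statement in general (for an atomic $\sigma$ whose support generates $G$ but lies in a coset of a proper subgroup, $-1$ can lie in the spectrum of $\pi(\sigma)$), but it is harmless where the lemma is applied, since the measures constructed in Lemma~\ref{L:goodmeas} have full support.
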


All ergodic-theoretical statements regarding actions on compact Riemannian manifolds 
are understood to refer to the normalised measure defined by the volume form.
We endow by default compact groups with their normalised Haar measure. More generally, when
 $H<G$ is a closed subgroup of a lcsc group such that $K/L$ admits a
 non-zero $G$-invariant measure, we endow it with such a measure which we denote by $m_{G/H}$.
We normalise $m_{G/H}$ whenever it is finite.

\section{From Cocycle superrigidity to Theorem~\ref{T:product-of-T}}
In this section, we deduce Theorem~\ref{T:product-of-T} and its corollaries
from the general cocycle superrigidity Theorem~\ref{T:cocycle2}. To this end, we use also the main result
of Zimmer's~\cite{Zimmer:spectrum:91} which
states that a smooth volume-preserving action of
a property~(T) group on a compact manifold preserving
a measurable Riemannian structure has discrete spectrum.
This result uses Kazhdan's property~(T) and smoothness
in an essential way (more than just the spectral gap for
the action on the manifold).

\subsection{Proof of Theorem~\ref{T:product-of-T}}\label{S:T:product-of-T}
A volume-preserving smooth action
$\Gamma\acts M$ defines a $\Gamma$-action
on the tangent bundle $TM$. 
The tangent bundle can be measurably trivialised, thus defining
the \emph{derivative cocycle} (unique up to cohomology) 
$\alpha:\Gamma\times M\to \GL^1_{d}(\RR)$ where $d=\dim(M)$.
Moreover, one can assume that the norm $\|\alpha(g,-)\|$ is essentially
bounded over $M$ for each $g\in\Gamma$.

To see this, choose a measurable family $\{ V\xrightarrow{p_x} U_{x}\ :\ x\in M\}$
of volume-preserving charts, where $V$ and $U_{x}$ are neighbourhoods of $0\in\RR^{d}$ 
and $x\in M$ respectively, such that $p_{x}(0)=x$
(it suffices to require $|{\rm Jac}(p_{x})(0)|=1$), with
$\|Dp_{x}(0)\|$ being uniformly bounded over
$x\in M$. This is possible by compactness of $M$.
Given such a family, one defines the measurable cocycle 
\begin{equation}
\label{e:der-coc}
        \alpha:\Gamma\times M\lra \GL^1_{d}(\RR)\qquad\text{by}
        \qquad\alpha(g,x)=D(p_{g x}^{-1}\circ g\circ p_{x})(0).
\end{equation}
Observe that changing $\{p_{x}\}_{x\in M}$ would yield cohomologous of cocycles.
Note that for each $g\in\Gamma$ the cocycle $\alpha(g,-)$ is essentially bounded, 
due to the uniform bound on $\|Dp_{x}(0)\|$
and on the derivative of $g$ over the compact manifold $M$.

\smallskip

Since each $\Gamma_{i}$ has property~(T), the ergodicity of the action $\Gamma_{i}\acts M$,
which is equivalent to the absence of $\Gamma_{i}$-invariant vectors in $L^{2}_{0}(M)$, 
yields a spectral gap in this representation. 
Hence we are in position to apply Cocycle Superrigidity
Theorem~\ref{T:cocycle2}, with $G_i$ being $\Gamma_i$ (endowed with the discrete topology)
and $X=M$.

\medskip

Suppose first that all factors $H_i$ appearing in the statement of Theorem~\ref{T:cocycle2}
are non-trivial (and thus non-compact). Then we have finite index subgroups $\Gamma_{i}^*$
with Zariski-dense representations $\ro_{i}^*:\Gamma_{i}^*\to H_{i}$ and a virtual semi-conjugacy
of the derivative cocycle to the product representation $\ro$. This is precisely the case of
the linearity phenomena (Lin) in Theorem~\ref{T:product-of-T}; we just need to justify
$\mathrm{dim}(M)\geq 3\,n$. We first observe that each $H_i$ has at least one simple factor with
property~(T). Indeed, we recall that the image of a group with property~(T) in a
semi-simple Lie group without~(T) is always bounded (and thus not Zariski-dense) since such
Lie groups have the Haagerup property~\cite{Cherix:etc}. Now it suffices to observe that
non-compact simple Lie groups with property~(T) have no unbounded real representation of dimension less
than~$3$. (This follows immediately, for instance, from the Haagerup property of $\SL_2(\RR)$ together
with the fact that Kazhdan groups have compact Abelianization~\cite{Kazhdan67}.)

\medskip

It remains to show that the only alternative is the case described in (Isom$_{1}$)--(Isom$_{3}$).
Hence, assume that at least one of the factors $H_i$ in Theorem~\ref{T:cocycle2} is compact.
In this case, at least one of the 
factors of $\Gamma$, say $\Gamma_{1}$, has the property that the restriction $\alpha|_{\Gamma_1}$
is cohomologous to a cocycle ranging in an amenable group
(namely in a finite extension of the amenable radical of $L^{0}$).

It is well-known~\cite[9.1.1]{Zimmer:book:84} that any measurable
cocycle of an ergodic action of a property~(T) group
to an amenable group is cohomologous to a cocycle ranging in some compact subgroup.
Hence the restriction $\alpha|_{\Gamma_1}$ is
cohomologous to a cocycle ranging in a compact subgroup of $\GL^1_{d}(\RR)$;
upon conjugating further, we may assume it ranges in ${\rm O}_{d}(\RR)$. 
This is equivalent to saying that $\Gamma_{1}$ preserves a measurable Riemannian
structure on $M$.

\smallskip

This is precisely a situation analysed by Zimmer in~\cite{Zimmer:spectrum:91},
where he proves (Theorem~1.7) that in such case  the action $\Gamma_{1}\acts M$
has discrete spectrum: that is, $L^{2}(M)$ splits as a direct sum of finite dimensional
$\Gamma_{1}$-invariant subspaces.
By Mackey's measure-theoretical converse to the Peter--Weyl theorem~\cite{Mackey64},
such an action is measure-theoretically isomorphic to an isometric action
(the case of a single transformation was previously established by Halmos and von
Neumann~\cite{vonNeumann32,Halmos-vonNeumann}).
This means that there exist a compact group $K$, a homomorphism $\tau_{1}:\Gamma_{1}\to K$
a closed subgroup $L<K$, and a measure space isomorphism 
\[
        T:M\xrightarrow{\,\cong\,} K/L \qquad\text{with}\qquad
        T(g_{1} x)=\tau_{1}(g_{1})T(x) 
\]
for a.e. $x\in M$ and all $g_{1}\in \Gamma_{1}$. 
We can assume that $L$ does not contain non-trivial closed subgroups that are normal in $K$
upon dividing by the kernel of the $K$-action on $K/L$.

Note that the group $N_{K}(L)/L$ acting on $K/L$  faithfully from the right
commutes with the $\Gamma_{1}$-action by left translations.
Denote by $\Aut(K/L,m_{K/L})$ the group of all measure space automorphisms,
where everything is understood modulo null sets. We recall the following easy 
\begin{lem}[{see~\cite[7.2]{Furman:outer:05}}]
The centraliser of $\tau_{1}(\Gamma_{1})$ in $\Aut(K/L,m_{K/L})$
is precisely $N_{K}(L)/L$.\hfill\qedsymbol
\end{lem}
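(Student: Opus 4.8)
The plan is to argue entirely with measure-preserving transformations, and the single external input we use is that $\tau_{1}(\Gamma_{1})$ is dense in $K$ (this holds in the case at hand — it is forced by ergodicity of $\Gamma_{1}\acts K/L$ — and if needed one simply replaces $K$ by the closure of $\tau_{1}(\Gamma_{1})$). First I would reduce to identifying the centraliser of the \emph{full} left-translation action of $K$ on $K/L$. Equip $\Aut(K/L,m_{K/L})$ with its usual Polish topology, namely the one pulled back through the Koopman embedding into the unitary group of $L^{2}(K/L)$; with respect to it the map $k\mapsto\lambda_{k}$ (left translation) is continuous. Hence, for a fixed $\fhi\in\Aut(K/L,m_{K/L})$, the set of $k\in K$ for which the Koopman operators satisfy $U_{k}U_{\fhi}=U_{\fhi}U_{k}$ is a closed subgroup of $K$; if $\fhi$ centralises $\tau_{1}(\Gamma_{1})$, this subgroup contains the dense subgroup $\tau_{1}(\Gamma_{1})$ and therefore equals $K$, so that $\fhi$ commutes with every $\lambda_{k}$.

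One inclusion is then immediate: for $n\in N_{K}(L)$, right translation $\ro_{n}\colon kL\mapsto knL$ is a well-defined measure-preserving automorphism of $K/L$ — well-definedness is exactly $n^{-1}Ln=L$, and $(\ro_{n})_{*}m_{K/L}$ is again $K$-left-invariant, hence equals $m_{K/L}$ by uniqueness of the invariant probability — it commutes with all left translations, and $n\mapsto\ro_{n}$ has kernel precisely $L$; so $N_{K}(L)/L$ sits inside the centraliser. For the reverse inclusion, given $\fhi$ commuting with all $\lambda_{k}$, I would lift it along $\pi\colon K\to K/L$ to $\psi=\fhi\circ\pi\colon K\to K/L$. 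Left equivariance yields $\psi(hk)=h\psi(k)$ for a.e. $(h,k)\in K\times K$, and applying the measure-preserving change of variables $(h,k)\mapsto(hk,k)$ shows that $k\mapsto k^{-1}\psi(k)\in K/L$ is almost everywhere equal to a single point $x_{0}=nL$; that is, $\fhi(kL)=knL$ for a.e. $k$. It remains to verify $n\in N_{K}(L)$: since $\psi(kl)=\psi(k)$ holds identically for $l\in L$, a Fubini argument over $K\times L$ — using that a proper closed subgroup of $L$ is $m_{L}$-null — forces $n^{-1}Ln\subseteq L$, and repeating the argument for $\fhi^{-1}$, which likewise commutes with all left translations, gives the opposite inclusion. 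Thus $\fhi=\ro_{n}$ with $n\in N_{K}(L)$, which is the desired conclusion.

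I expect no serious obstacle — this is why the lemma is merely quoted — but the two points that require a little care are the topological input in the reduction step (continuity of $k\mapsto\lambda_{k}$ and closedness of the commuting set, which is what lets density of $\tau_{1}(\Gamma_{1})$ be exploited) and the bookkeeping with almost-everywhere identities in the surjectivity step, in particular checking that the relevant relations are jointly measurable so that Fubini applies.
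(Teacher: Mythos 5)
Your argument is correct, and since the paper gives no proof of its own here (the lemma is simply quoted from~\cite[7.2]{Furman:outer:05}), there is nothing to diverge from: what you write is the standard argument behind the citation --- use density of $\tau_{1}(\Gamma_{1})$ in $K$ to pass from its centraliser to the centraliser of \emph{all} left translations (via continuity of $k\mapsto\lambda_k$ in the Koopman picture), then identify the latter with right translations by $N_{K}(L)/L$ through the ``$k\mapsto k^{-1}\psi(k)$ is a.e.\ constant'' computation.

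Three small repairs are needed, none fatal. First, your parenthetical justification of density is wrong: ergodicity of $\Gamma_{1}\acts K/L$ only forces the closure $\overline{\tau_{1}(\Gamma_{1})}$ to act transitively on $K/L$, and proper closed transitive subgroups exist --- for instance a group mapping densely into $\mathrm{SU}(2)<\SO_{4}(\RR)$ acts ergodically on $\SO_{4}(\RR)/\SO_{3}(\RR)$, yet its centraliser there is three-dimensional while $N_{K}(L)/L$ is finite; so the lemma genuinely needs density, and your fallback of replacing $K$ by the closure would change the group $N_{K}(L)/L$ appearing in the conclusion. Density is legitimately available in the paper's situation because it is part of the Mackey/Halmos--von Neumann construction, and it should be invoked on that basis. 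Second, ``a proper closed subgroup of $L$ is $m_{L}$-null'' is false (an open subgroup of index two is proper of measure $1/2$); what you actually need, and what is true, is that a closed subgroup of \emph{full} Haar measure is the whole group (positive measure forces openness by Steinhaus, and an open subgroup of full measure has index one). Third, ``repeating the argument for $\fhi^{-1}$'' does not by itself give the opposite inclusion: you must first relate the two constants, namely deduce $nn'\in L$ from $\fhi^{-1}\circ\fhi=\mathrm{id}$, after which $n'^{-1}Ln'\se L$ indeed translates into $nLn^{-1}\se L$; alternatively, one inclusion suffices, since in a compact group $\{x\in K: x^{-1}Lx\se L\}$ is a closed subsemigroup, hence a subgroup, so $n^{-1}Ln\se L$ already forces equality.
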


Denote $\Gamma'_{1}=\Gamma_{2}\times\cdots\times \Gamma_{n}$, so that
$\Gamma=\Gamma_{1}\times\Gamma_{1}'$. 
By the above Lemma, the $\Gamma_{1}'$-action, which commutes 
with the $\Gamma_{1}$-action on $M\cong K/L$, 
defines a homomorphism $\tau:\Gamma_{1}'\to N_{K}(L)/L$.
Ergodicity of the $\Gamma_{1}'$-action implies that $N_{K}(L)/L$ 
also acts ergodically on $K/L$. 
But $N_{K}(L)$ and $N_{K}(L)/L$ are compact groups, 
so ergodicity means that the action is transitive, i.e., $N_{K}(L)=K$
and $L$ is normal in $K$. By our convention this means that $L$ is trivial, 
i.e. $M\cong K$. 
In particular, the representation $\tau$ of $\Gamma'_{1}$ ranges
into $K$ itself. 

\smallskip

The ergodicity assumption of the action of each $\Gamma_{i}$ on 
$M\cong K$ means that the images $\tau(\Gamma_{i})$ 
are dense in $K$.  
We claim that $n=2$, i.e. $\Gamma_{1}'=\Gamma_{2}$. 
Indeed if $n\ge 3$ then $K$ contains two commuting subgroups $\tau(\Gamma_{2})$ 
and $\tau(\Gamma_{3})$, each being dense in $K$. 
This forces $K$ to be commutative.
Property~(T) of, say $\Gamma_{1}$, implies that $\tau_{1}(\Gamma_{1})$ is finite,
hence so is $K$.
But this contradicts the measure-theoretic isomorphism of $K$ with $M$
since the volume has no atoms.
This completes the description of $\Gamma\acts M$ in this case, and thus the proof 
of Theorem~\ref{T:product-of-T}.\hfill\qedsymbol

\bigskip

For the variation on Theorem~\ref{T:product-of-T}, we recall from Remark~\ref{R:details} that cocycle
superrigidity also holds if the action of one factor lacks the spectral gap property. Therefore, we can follow
the above proof by grouping all factors without property~(T) into one factor and reason with
$$\Gamma_1\times \cdots\times \Gamma_k \times \big(\Gamma_{k+1}\times \cdots \times \Gamma_n\big).$$
Thus, the argument above can be repeated \emph{verbatim} with the only difference that the distinction of the
two cases in the alternative hinges upon whether at least one factor $H_i$ is compact \emph{for $i\leq k$}.
We emphasize that the conclusion (Ism$_1$) is still $n=2$ (rather than only $k=2$).

\subsection{Corollaries}\label{S:cors}
First we recall how to deduce the entropy formula~\eqref{E:pesin}. We refer
to~\cite{Furstenberg:afterMargulisZimmer:81} or~\cite[9.4]{Zimmer:book:84} for
more precisions on the following exposition.
For any (finite-dimensional) linear operator $A$, denote by $\{\lambda_k\}$ the set
of \emph{Lyapunov (characteristic) exponents}, that is, the set of logarithms $\log|a_j|$
of all eigenvalues $a_j$ of $A$ with
$|a_j| >1$. Thus all $\lambda_k$ are distinct and positive; the total multiplicity $m_k$ 
is the sum of the multiplicities of all $a_j$ with $\log|a_j|=\lambda_k$. One has the relation
$$\sum_k m_k \lambda_k\ =\ \max_p \lim_{n\to\infty} \frac{1}{n} \log \|\wedge^p A^n \|,$$
wherein the symbol $\wedge$ denotes the exterior $p$-power. Returning to
Remark~\ref{R:pesin} and taking $A=\ro(g)$, we observe that the quantity
$$\max_p \lim_{n\to\infty} \frac{1}{n} \log \|\wedge^p\ro(g^n) \|$$
is well-defined and is not affected by the (virtual) semi-conjugacy
that Theorem~\ref{T:product-of-T} produces.
On the other hand, Pesin's formula~\cite{Pesin} gives
$$h(g,M)\ =\ \max_p \lim_{n\to\infty} \frac{1}{n} \log \|\wedge^p D g^n|_x \| \qquad(\text{a.e. } x\in M)$$
and hence~\eqref{E:pesin} follows as claimed. For more on
convergence of the above limit and characteristic exponents, see Section~\ref{S:RW}.

\medskip

Before continuing, we recall the elementary fact that if a group has a finite index subgroup
that admits a linear representation with infinite (or unbounded) image, then so does
the initial group.

\begin{proof}[Proof of Corollary~\ref{C:non-linear}]
The statement follows immediately from Theorem~\ref{T:product-of-T}
if one recalls that mixing implies ergodic irreducibility
(since the factors are assumed infinite)
and precludes discrete spectrum.
\end{proof}

Corollary~\ref{C:no-actions} hinges on the well-known fact
that a finitely generated group is residually finite if and only if is has
an injective morphism to a compact group, see the proof:

\begin{proof}[Proof of Corollary~\ref{C:no-actions}]
It suffices to show that both conclusions offered by Theorem~\ref{T:product-of-T}
are incompatible with our assumptions. We can suppose that $\Gamma_1$
has only finitely many finite quotients.
Recall that property~(T) implies that $\Gamma_1$ is finitely generated~\cite{Kazhdan67}.
Since finitely generated linear groups are residually finite~\cite{Malcev40},
every linear image of $\Gamma_1$ is finite. This already rules out case~(${\rm Lin}$).

We consider now a homomorphism $\tau_1: \Gamma_1\to K$ as in case~(${\rm Ism}$)
and seek a contradiction. By the Peter--Weyl theorem, $K$ embeds into a product
$\prod_{n\in \NN} U_n$ of (finite-dimensional) unitary groups $U_n$. By the above discussion,
the image of $\tau_1(\Gamma_1)$ in each $U_n$ is finite, and hence by density the same
statement holds for $K$. Thus $K$ is profinite; this implies that the image of $\Gamma_1$ in
$K$ is finite. Thus $K$ is a finite group, contradicting the fact that $M$ has positive dimension
and is measurably isomorphic to $K$.
\end{proof}

Finally, we briefly recall that there are many known infinite groups with Kazhdan's property~(T) that do not admit
unbounded linear representations over $\RR$.
These include: (a)~Quotients (by infinite kernels) of
lattices in ${\rm Sp}_{n,1}(\RR)$, in view of Corlette's superrigidity~\cite{Corlette}; (b)~Lattices
in semi-simple groups of higher rank over non-Archimedean fields, by Margulis'
superrigidity~\cite{Margulis:book}; (c)~Suitable Kac--Moody
groups~\cite{RemyERN,Remy-Bonvin,Caprace-Remy06,Dymara-J02};
(d)~Gromov's constructions of simple groups or torsion groups
as quotients of arbitrary non-elementary hyperbolic groups~\cite{Gromov87}, which are Kazhdan as soon
as the corresponding hyperbolic group is so; (e)~Gromov's random groups~\cite{GromovRANDOM}. Some of the
groups listed under~(c) and~(d) above have no non-trivial finite quotients.

Of course it is expected that there are many more property~(T) groups outside the linear realm. In
fact, several of the above examples have much stronger properties than those needed for
Corollary~\ref{C:non-linear} and are therefore not really an illustration of our results.
It has been observed on several occasions (we learned it from Sh.~Matsumoto) that the
random groups of~\cite{GromovRANDOM} do not have mixing smooth volume-preserving actions
on compact manifolds. Indeed, they have the fixed point property for isometric actions on
``induced'' spaces of the form
$$\int_M \GL(T_x M) / \mathrm{O}(g_x) \d x,$$
where $g$ is the Riemannian metric; see \emph{e.g.}~\cite[Ex.~47]{Monod:SuperSplitting:06} and compare
Section~\ref{S:splitting} below.%
\footnote{D.~Fisher has informed us that upon reading the preprint version of this paper he made the following observation with
L.~Silberman~\cite{Fisher-Silberman}: if a group fixes a point in the above Hilbert manifold
and has no finite quotient, then the underlying action on the manifold is trivial. (Indeed, one combines as above Zimmer's
result~\cite{Zimmer:spectrum:91} with Peter--Weyl and Mal'cev.) The authors of~\cite{Fisher-Silberman} then give many very
interesting illustrations of this statement.}
Groups as in~(a) also have strong restrictions on their cocycles, see Fisher--Hitchman~\cite{Fisher-Hitchman:IMRN}.

\section{Preliminaries for the Cocycle Superrigidity Theorem}
We recall that a group action on a standard Borel space is \emph{tame}
(or ``smooth'' in~\cite{Zimmer:book:84}) if the quotient Borel structure
is countably separated. For continuous actions of lcsc groups
on separable metrisable spaces, this is equivalent to having locally closed orbits
by the Effros--Glimm theorem~\cite[2.1.14]{Zimmer:book:84}. We shall repeatedly use the following 
fundamental facts.

\begin{thm}\label{thm:tame:alg}
Let $G<\GL_N(\RR)$ be a real algebraic group.
\begin{itemize}
\item[(i)] If $H,L<G$ are algebraic subgroups,
then the $H$-action on $G/L$ is tame. 

\item[(ii)] The $G$-action on $\Prob(\PP^{N-1})$ is tame.
\end{itemize}
\end{thm}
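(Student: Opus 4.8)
The plan is to prove (i) and (ii) separately, (i) by the general theory of algebraic actions and (ii) by induction on $N$ reducing to (i) together with Furstenberg's lemma. For (i): since $L$ is algebraic, $G/L$ is a real algebraic variety and the $G$-action (in particular the $H$-action) on it is algebraic; moreover $G/L$ is a second countable Lie homogeneous space and $H$, being a real algebraic group, is a second countable Lie group acting continuously. I would then invoke the Rosenlicht--Chevalley theorem that orbits of an algebraic action on a variety are locally closed for the Zariski topology (see \emph{e.g.}~\cite[\S 3.1]{Zimmer:book:84}); descending to real points, each $H$-orbit in $G/L$ is a finite union of connected components of the real locus of a Zariski-locally-closed set, hence locally closed in the manifold topology of $G/L$. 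Tameness then follows from the Effros--Glimm criterion~\cite[2.1.14]{Zimmer:book:84}. The only mild care is the passage between the Zariski and Hausdorff topologies over $\RR$.

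For (ii) I would induct on $N$ (the case $N=1$ being trivial), using the $G$-invariant decomposition
\[
\Prob(\PP^{N-1})\ =\ \Prob^{\mathrm{nd}}\ \sqcup\ D ,
\]
where $D$ is the set of \emph{degenerate} measures --- those giving full mass to some proper projective subspace --- and $\Prob^{\mathrm{nd}}$ is its complement. One first checks that $D$ is closed, hence $\Prob^{\mathrm{nd}}$ open: a weak limit of measures supported on proper subspaces $V_n$ is, along a subsequence with $V_n\to V$ in a Grassmannian, supported on every closed $\varepsilon$-neighbourhood of $V$, hence on $V$ itself. It then suffices to establish tameness on each of the two $G$-invariant pieces.

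On $\Prob^{\mathrm{nd}}$ the key input is the classical lemma of Furstenberg (see \emph{e.g.}~\cite[\S 3.2]{Zimmer:book:84}): if $\mu$ is non-degenerate and $g_n\mu$ converges weakly, then either the limit is degenerate or the images of the $g_n$ in $\mathrm{PGL}_N(\RR)$ subconverge. Consequently, if $\mu\in\Prob^{\mathrm{nd}}$ and $g_n\mu$ converges inside $\Prob^{\mathrm{nd}}$, then a subsequence of $g_n$ converges to some $h$; since $G$ is a closed (indeed algebraic) subgroup of $\GL_N(\RR)$ and the image of an algebraic group under an algebraic homomorphism is closed, $h$ lies in the image of $G$, so the limit is again in $G\mu$. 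Hence every $G$-orbit is closed in $\Prob^{\mathrm{nd}}$, therefore locally closed in $\Prob(\PP^{N-1})$ as $\Prob^{\mathrm{nd}}$ is open, and tameness follows from~\cite[2.1.14]{Zimmer:book:84}.

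On $D$ I would use the span stratification: the Borel $G$-equivariant map sending $\mu$ to the smallest subspace $V_\mu$ of full measure (well-defined since $\mu(V)=\mu(W)=1$ forces $\mu(V\cap W)=1$) takes values in the tame $G$-space $\bigsqcup_{0\le j\le N-2}\mathrm{Gr}_j(\RR^N)$, tameness here coming from part~(i). The fibre over a fixed $V_0$ of dimension $j$ is the space of non-degenerate measures on $\PP(V_0)$, on which $\mathrm{Stab}_G(V_0)$ acts through an algebraic subgroup of $\GL_{j+1}(\RR)$ (the kernel of $g\mapsto g|_{V_0}$ acts trivially); since $j+1<N$, this fibre action is tame by the inductive hypothesis, restricted to the invariant Borel subset of non-degenerate measures. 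By the standard fact that tameness passes to the total space of an equivariant Borel fibration with tame base and tame stabiliser-actions on the fibres, $G\acts D$ is tame, and combining the two pieces completes the induction. The crucial non-formal ingredient --- the expected main obstacle --- is the non-degenerate stratum, where Furstenberg's lemma is exactly what upgrades the easy compactness of stabilisers to closedness, hence local closedness, of orbits; the remaining steps are bookkeeping with the stratification, the fibration principle, and closedness of algebraic images.
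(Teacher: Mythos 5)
First, note that the paper itself offers no proof of this theorem: it quotes part (i) as an unpublished result of Chevalley (via Dixmier and~\cite[3.1.3]{Zimmer:book:84}) and part (ii) as Zimmer's~\cite[3.2.12]{Zimmer:book:84}, which rests on Furstenberg's lemma; so you are reconstructing cited results rather than paralleling an argument in the text. Your treatment of (i) is the standard one and is essentially correct, modulo the step you gloss over when ``descending to real points'': one needs that each $H$-orbit is open in the real locus of its Zariski orbit (a dimension/submersion count), after which finiteness of the number of such orbits and local closedness follow; this is exactly the content of the cited 3.1.3.

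Part (ii), however, has a genuine gap, and it sits exactly where you locate the ``crucial non-formal ingredient''. The lemma you invoke is not Furstenberg's lemma: if $g_n\to\infty$ in $\mathrm{PGL}_N(\RR)$ and $g_n\mu\to\nu$, the correct conclusion is that $\nu$ gives full mass to a union of \emph{two} proper projective subspaces, not to a single one; the dichotomy ``limit degenerate or $g_n$ subconverges'' holds only for measures giving \emph{zero} mass to every proper subspace, not for measures merely failing to give \emph{full} mass to one. Concretely, take $N=2$, let $m$ be the rotation-invariant measure on $\PP^{1}$, put $\mu=\tfrac12\delta_{[e_2]}+\tfrac12 m$ and $g_n=\mathrm{diag}(\lambda_n,\lambda_n^{-1})$ with $\lambda_n\to\infty$. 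Then $g_n\mu\to\nu=\tfrac12\delta_{[e_1]}+\tfrac12\delta_{[e_2]}$; both $\mu$ and $\nu$ lie in your $\Prob^{\mathrm{nd}}$, no subsequence of $(g_n)$ converges in $\mathrm{PGL}_2(\RR)$, and $\nu\notin\GL_2(\RR)\mu$ since $\nu$ is purely atomic while every $g_*\mu$ retains a continuous part of mass $\tfrac12$. So $G$-orbits are in general \emph{not} closed in $\Prob^{\mathrm{nd}}$, and your argument for local closedness on that stratum collapses; the hard case --- measures charging some proper subspace positively without being supported on one --- is precisely the case your degenerate/non-degenerate dichotomy does not isolate (your argument does work verbatim on the smaller stratum of measures vanishing on all proper subspaces). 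A correct treatment must track a finer equivariant invariant, for instance the finitely many proper subspaces of minimal dimension carrying maximal mass (finiteness because distinct such subspaces meet in null sets, so their masses add), mapping equivariantly to a tame space as in (i) and iterating; this is the delicate content of the result of Zimmer that the paper cites, and I would either carry it out in full or simply refer to it as the paper does. Two smaller points: the image of $\mathrm{Stab}_G(V_0)$ in $\GL(V_0)$ need not be algebraic, only an open (hence closed, finite-index) subgroup of the real points of its Zariski closure --- harmless, but it should be said, since tameness does not pass to arbitrary subgroups; and the ``fibration principle'' you invoke for the degenerate stratum is not a formal fact but requires a measurable-selection argument (\emph{e.g.} via Effros) over the tame base.
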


\begin{proof}[On the proof]
The first statement is apparently an unpublished result of
Chevalley from the 1950s, see the introduction of~\cite{Dixmier57};
for the proof, see~\cite{Dixmier66} and~\cite[3.1.3]{Zimmer:book:84}.
The second statement is due to Zimmer (see~\cite[3.2.12]{Zimmer:book:84})
and uses a result of Furstenberg~\cite{Furstenberg:Poisson:63}.
(A statement for measurable maps was given by Margulis in~\cite{Margulis:ICM:trans},
see also~\cite[\S7]{Zimmer:cocyclesuper:80}.)
\end{proof}

The following fact adapted from~\cite{MonodShalom:CO:04} is very general; it holds
even for cocycles over not necessarily measure-preserving actions.
\begin{prop}[Cocycle Splitting]
\label{P:CRL}
Let $G=G_{1}\times G_{2}$ and $H$ be lcsc groups, 
$G\acts (X,\mu)$ a measurable measure class-preserving action
on a standard probability space and $\alpha:G\times X\to H$
a cocycle. Assume:
\begin{itemize}
\item
        The restriction $\alpha|_{G_{1}}$ ranges in a closed subgroup $H_{1}<H$,
        and is not cohomologous to a cocycle ranging 
        in any proper subgroup of $H$ of the form $h^{-1}H_{1} h \cap H_{1}$ 
        for some $h\in H$;
\item
        The action of $H_{1}$ on $H/H_{1}$ is tame;
\item
        $G_{1}\acts (X,\mu)$ is ergodic.
\end{itemize}
Then $\alpha$ ranges in the normaliser $N_{H}(H_{1})$ of $H_1$ and, passing to the
quotient, the cocycle $G\times X\to N_{H}(H_{1})/H_{1}$ is a homomorphism
$G\to N_{H}(H_{1})/H_{1}$ factoring through $G_{2}$.
\end{prop}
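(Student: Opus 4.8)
The plan is to exploit the product structure of $G$ together with the tameness hypothesis to force $\alpha$ into the normaliser of $H_1$. First I would use the restriction hypothesis: since $\alpha|_{G_1}$ already ranges in $H_1$, for any $g_2\in G_2$ the cocycle $x\mapsto\alpha(g_2,g_1 x)^{-1}\alpha|_{G_1}(g_1, g_2 x)\alpha(g_2,x)$ is a version of $\alpha|_{G_1}$ conjugated by the $G_1$-equivariant map $x\mapsto\alpha(g_2,x)$ — here I use that $G_1$ and $G_2$ commute, so left-translation by $g_2$ is a $G_1$-map and the cocycle identity rearranges accordingly. Thus for a.e.\ $x$ the element $\alpha(g_2,x)$ conjugates the (essential) range of $\alpha|_{G_1}$ into $H_1$; combined with the minimality hypothesis (not cohomologous into $h^{-1}H_1h\cap H_1$), this should pin $\alpha(g_2,x)$ into $N_H(H_1)$.

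To make the conjugation argument rigorous, the key step is an ergodicity-plus-tameness argument à la Zimmer. Consider the map $\Phi:X\to H_1\backslash H$ given by $x\mapsto H_1\,\alpha(g_2,x)$ for a fixed $g_2$. The computation above shows $\Phi(g_1 x)=\Phi(x)$ for a.e.\ $x$ and all $g_1$ in a countable dense subgroup, hence (by ergodicity of $G_1\acts X$) $\Phi$ is essentially constant — so $\alpha(g_2,x)\in H_1 h(g_2)$ for a fixed coset. One then feeds this back: $h(g_2)$ conjugates the range of $\alpha|_{G_1}$ into $H_1$, and the ``minimality'' hypothesis (non-cohomology into any $h^{-1}H_1h\cap H_1$) rules out the possibility that $h(g_2)^{-1}H_1h(g_2)\cap H_1$ is proper. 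Here is where tameness of $H_1\acts H/H_1$ enters: it guarantees the relevant orbit map has a Borel section, so the ``essentially constant coset'' statement is genuinely a statement about a single well-defined group element rather than a measurable mess, and the argument identifying the stabiliser is valid. Hence $h(g_2)\in N_H(H_1)$, and since $\alpha|_{G_1}\subseteq H_1\subseteq N_H(H_1)$ as well, the full cocycle $\alpha$ ranges in $N_H(H_1)$.

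Finally I would push $\alpha$ down to $\bar\alpha: G\times X\to N_H(H_1)/H_1$. The restriction $\bar\alpha|_{G_1}$ is trivial (it ranges in $H_1$), so the cocycle identity $\bar\alpha(g_1 g_2, x)=\bar\alpha(g_1, g_2 x)\bar\alpha(g_2,x)$ degenerates: writing $g=g_1 g_2=g_2 g_1$ and using commutativity, one gets $\bar\alpha(g,x)=\bar\alpha(g_2, g_1 x)$, and separately $\bar\alpha(g_2, g_1 x)=\bar\alpha(g_2,x)$ because $\bar\alpha(g_1,-)\equiv e$ and the cocycle relation applied to $g_1 g_2$ versus $g_2 g_1$ forces $\bar\alpha(g_2,-)$ to be $G_1$-invariant; ergodicity of $G_1\acts X$ then makes $\bar\alpha(g_2,-)$ essentially constant for each $g_2$, defining a homomorphism $G_2\to N_H(H_1)/H_1$, and $\bar\alpha$ itself is the inflation of this homomorphism to $G$, i.e.\ it factors through the projection $G\to G_2$.

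The main obstacle I expect is the first, conjugation step: turning ``$\alpha(g_2,x)$ conjugates the essential range of $\alpha|_{G_1}$ into $H_1$'' into ``$\alpha(g_2,x)\in N_H(H_1)$ modulo $H_1$'' cleanly. The essential range of a cocycle restricted to $G_1$ is not literally a subgroup, so one must argue at the level of the algebraic (or closed) hull of $\alpha|_{G_1}$ inside $H_1$, or directly with the cohomology-invariance statement, and this is exactly where the hypothesis phrased as ``not cohomologous into $h^{-1}H_1h\cap H_1$'' is tailored to do the work; checking that this hypothesis is precisely what is needed (and that tameness of $H_1\acts H/H_1$ supplies the measurable selection making the orbit-stabiliser reasoning legitimate) is the delicate point. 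I would model this on the corresponding argument in~\cite{MonodShalom:CO:04}, from which the proposition is adapted.
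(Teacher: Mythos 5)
The overall architecture you propose (commutation identity, ergodicity plus tameness to pin a coset, the non-cohomology hypothesis to force normalisation, then the quotient cocycle becoming a homomorphism through $G_2$) is indeed that of the argument on p.~413 of \cite{MonodShalom:CO:04}, to which the paper simply refers for this proposition, and your final paragraph (pushing down to $N_H(H_1)/H_1$) is fine once the range statement is known. But there is a genuine error at the central step. From $\alpha(g_2,g_1x)=\alpha(g_1,g_2x)\,\alpha(g_2,x)\,\alpha(g_1,x)^{-1}$ one gets $H_1\alpha(g_2,g_1x)=H_1\alpha(g_2,x)\alpha(g_1,x)^{-1}$: your map $\Phi(x)=H_1\alpha(g_2,x)$ is \emph{not} $G_1$-invariant (it is only equivariant for the right $H_1$-action twisted by $\alpha|_{G_1}$), so you cannot conclude it is essentially constant, and the assertion that $\alpha(g_2,x)$ lies in a single coset $H_1h(g_2)$ is precisely the statement that only becomes true at the very end, after one knows $h(g_2)\in N_H(H_1)$ (so that $H_1hH_1=H_1h$). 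What \emph{is} $G_1$-invariant is the double-coset map $x\mapsto H_1\alpha(g_2,x)H_1\in H_1\backslash H/H_1$, and this is exactly where tameness of $H_1\acts H/H_1$ is needed: it makes the double-coset space countably separated, so that ergodicity of $G_1$ yields essential constancy. Your proposed use of tameness (a Borel section to name a representative) does not substitute for this; indeed $H_1\backslash H$ is already a standard Borel space, so if your invariance claim were correct the tameness hypothesis would be superfluous --- a warning sign that the step is wrong.

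Once one has $\alpha(g_2,x)\in H_1h_0H_1$ a.e., the proof can be completed along the lines you gesture at in your ``feed back'' paragraph, but it must be run from the double coset: the map $x\mapsto\alpha(g_2,x)^{-1}H_1$ is an $\alpha|_{G_1}$-equivariant map from $X$ into the single $H_1$-orbit of $h_0H_1$ in $H/H_1$, which is $H_1$-isomorphic to $H_1/(H_1\cap h_0H_1h_0^{-1})$; the cocycle reduction lemma (using a measurable section of the orbit map, again supplied by tameness) then shows $\alpha|_{G_1}$ is cohomologous to a cocycle ranging in $H_1\cap h_0H_1h_0^{-1}$, which the hypothesis of the proposition forbids from being proper. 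Only after this does the double coset collapse to $H_1h_0$, giving $\alpha(g_2,\cdot)\subset N_H(H_1)$, whereupon your argument that $\bar\alpha(g_2,\cdot)$ is $G_1$-invariant, essentially constant, and assembles into a homomorphism $G_2\to N_H(H_1)/H_1$ goes through.
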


\begin{proof}
The arguments given on page~413 of~\cite{MonodShalom:CO:04}
apply word for word (with $G_1$ and $G_2=G'_1$ exchanged; 
the more specific assumptions in~\cite{MonodShalom:CO:04}
are not used for that proposition).
\end{proof}

The following is probably well-known.

\begin{lem}
\label{L:hull}
Let $G$ be a lcsc group, $X$ an ergodic probability $G$-space
and $\alpha:G\times X\to H$
a cocycle ranging in
an algebraic subgroup $H<L$ of an algebraic group $L$.

Then  the algebraic hull of $\alpha$ viewed as a cocycle in $H$ is the same as in $L$.
\end{lem}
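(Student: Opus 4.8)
The statement asks that the algebraic hull of a cocycle $\alpha: G\times X\to H$ does not change when we regard $\alpha$ as taking values in a larger algebraic overgroup $L\supseteq H$. The plan is to argue directly from the definition of the algebraic hull recalled in the introduction. Let $M_H \le H$ be the algebraic hull of $\alpha$ computed inside $H$, and let $M_L \le L$ be the algebraic hull computed inside $L$. First I would observe that $M_L$ is contained (up to conjugacy in $L$) in any algebraic subgroup of $L$ into which a conjugate of $\alpha$ ranges; since $M_H\le H\le L$ is such a subgroup, we get that $M_L$ is $L$-conjugate into $M_H$, hence in particular $\dim M_L\le \dim M_H$ and $M_L$ has at most as many connected components. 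So the only thing to prove is the reverse: the hull $M_L$, although \emph{a priori} only an algebraic subgroup of $L$, can be taken inside $H$ and is then no smaller than $M_H$.

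The key step is therefore the following: if $f:X\to L$ is a measurable map such that the conjugated cocycle $\alpha^f$ ranges in the algebraic subgroup $M_L\le L$, I want to produce a conjugate of $\alpha$ by a map valued in $H$ that still ranges in an algebraic subgroup contained (up to conjugacy) in $M_L$. Here is where I would use ergodicity together with the tameness results of Theorem \ref{thm:tame:alg}. Consider the $L$-space $L/H$ (with $H$ algebraic in $L$, so $H$ acts tamely on $L/H$ by part~(i); more to the point, every algebraic subgroup of $L$ acts tamely on $L/H$). Since $\alpha$ itself already ranges in $H$, the induced cocycle action on $X\ltimes (L/H)$ admits the section $x\mapsto eH$ as an invariant measurable section. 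Now $\alpha^f = f(g x)^{-1}\alpha(g,x) f(x)$ ranges in $M_L$; the function $\bar f := f^{-1}$ composed with the projection $L\to L/H$ gives a measurable map $X\to L/H$ which is equivariant for the $M_L$-cocycle $\alpha^f$ acting on $L/H$. Because the $M_L$-action on $L/H$ is tame and $G\acts X$ is ergodic, this equivariant map is essentially supported on a single $M_L$-orbit $M_L\cdot \ell H$ in $L/H$. Choosing $\ell$ and replacing $f$ by $\ell^{-1}f$ (which only conjugates $M_L$ by $\ell$, keeping it an algebraic subgroup of $L$), we may assume $\bar f$ is essentially valued in the single orbit $M_L\cdot eH \cong M_L/(M_L\cap H)$.

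From this point the argument is bookkeeping. The stabiliser of $eH$ for the $M_L$-action on $L/H$ is $M_L\cap H$, which is an algebraic subgroup of $H$; and the orbit map $M_L/(M_L\cap H)\to L/H$ is a Borel isomorphism onto the locally closed orbit (tameness again). Lifting $\bar f$ through a Borel section of $M_L\to M_L/(M_L\cap H)$ and adjusting $f$ accordingly, I can arrange that $f$ is essentially valued in $H$, so that $\alpha^f$ ranges in $M_L\cap H\le H$, an algebraic subgroup of $H$. By minimality of $M_H$ among such subgroups, $M_H$ is $H$-conjugate into $M_L\cap H\le M_L$, which gives the reverse inclusion and hence $M_H$ and $M_L$ are conjugate in $L$. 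The main obstacle — and the only place real content enters — is the step invoking tameness to conclude that the equivariant map into $L/H$ concentrates on one orbit; this is precisely the standard mechanism behind the existence and uniqueness of the algebraic hull (cf.~\cite[9.2]{Zimmer:book:84}), and here it is being applied to the pair $(M_L, L/H)$ rather than to the ambient group, which is legitimate since all subgroups in sight are algebraic and act tamely on $L/H$ by Theorem~\ref{thm:tame:alg}(i). Everything else is formal manipulation of cocycle conjugation and does not merit being spelled out.
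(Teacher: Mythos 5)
Your proposal is correct and follows essentially the same route as the paper: both use Theorem~\ref{thm:tame:alg}(i) together with ergodicity to force a measurable equivariant map into a single orbit of an algebraic subgroup on an algebraic homogeneous space, and then conclude by minimality (equivalently, Zariski-density) of the hull. The only cosmetic difference is that you run the orbit argument on $L/H$ with the hull-valued cocycle $\alpha^f$ acting, whereas the paper runs it on $L/H'$ (quotient by the hull in $L$) with the $H$-valued cocycle $\alpha$ acting --- a mirror image of the same computation.
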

\begin{proof}
Without loss of generality, $\alpha$ is Zariski-dense in
$H$. Let $H'<L$ be its hull in $L$; we have to show that
$H,H'$ are conjugated and it suffices to show that $H$ can
be conjugated into $H'$.

Let $\fhi:X\to L$ be a measurable map with $\alpha^{\fhi}$ ranging
in $H'$. 
The condition $\alpha^{\fhi}(g,x)\in H'$ can be written as
\[
        \fhi(g x) H'=\alpha(g,x)\fhi(x) H'.
\]
Since the left $H$-action on $L/H'$ is tame by Theorem~\ref{thm:tame:alg},
it follows by the ergodicity
that almost all $\fhi(x) H'$ belong to a single $H$-orbit $H\ell_0 H'$.
Hence for some measurable $\psi:X\to H$ we have
\[
        \fhi(x) H'=\psi(x) \ell_{0}H',
\]
which means that $\alpha^{\psi}$ ranges in $\ell_{0}H'\ell_{0}^{-1}$,
and thus in $\ell_{0}H'\ell_{0}\cap H$. By Zariski-density of
$\alpha$ in $H$, that range is $H$ and thus indeed
$\ell_0^{-1} H \ell_0 \se H'$.
\end{proof}

We shall prove the following statement, which is a variant of the ideas used by
Zimmer in~\cite{Zimmer:IHES:84} for the case where $L$ is compact.

\begin{prop}[Controlled Conjugation]
\label{P:taming}
Let $G,H$ be lcsc groups, $L<H$ a closed subgroup, $(X,\mu)$ a probability $G$-space
and $\alpha:G\times X \to H$ a cocycle. Fix some measurable norm on $H$ and 
assume that for some measurable $f:X\to H$ the cocycle $\alpha^{f}$
 ranges in $L$. Assume:
\begin{itemize}
\item[{(SG)}]
        The $G$-action on $X$ has a spectral gap;
\item[($L^{\infty}$)]
        $\|\alpha(g,-)\|\in L^{\infty}(X)$ for every $g\in G$.
\end{itemize}
Then there exists a measurable map $F:X\to H$ with $\|F(-)\|\in L^\delta(X)$
for some $\delta>0$ and such that $\alpha^{F}$ also ranges in $L$. In particular,
\begin{equation}
\tag{$L^{1}$}
        \int_X\log\|\alpha^F(g,x)\|\d\mu(x)\ < \infty\qquad(\forall\,g\in G).
\end{equation}
\end{prop}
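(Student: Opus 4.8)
The plan is to modify the given conjugator $f$ only by a correction taking values in $L$: since $\alpha^{f}$ ranges in $L$, the map $\Psi\colon X\to H/L$, $\Psi(x)=f(x)L$, is $\alpha$-equivariant, i.e. $\Psi(gx)=\alpha(g,x)\Psi(x)$, and any measurable $F\colon X\to H$ with $F(x)L=\Psi(x)$ differs from $f$ by an $L$-valued factor, hence still conjugates $\alpha$ into $L$. So I would choose in each coset $\Psi(x)$ a representative of almost minimal norm. Set $u(x)\defq\inf\{\log\|h\|:hL=\Psi(x)\}$; then $u\ge 0$, one has $u(x)\le\log\|f(x)\|<\infty$ for every $x$, and $u$ is measurable (a fibrewise infimum of a measurable function along the continuous open map $H\to H/L$, Borel on a conull set). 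By the von Neumann measurable selection theorem I obtain a measurable $F$ with $F(x)L=\Psi(x)$ and $\log\|F(x)\|\le u(x)+1$; this $F$ automatically has $\alpha^{F}$ ranging in $L$, so the whole proposition reduces to showing $e^{\delta u}\in L^{1}(X)$ for some $\delta>0$.

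\textbf{The cocycle controls $u$ along orbits.} Put $C(g)\defq\esssup_{x}\log\|\alpha(g,x)\|$, finite for every $g$ by $(L^{\infty})$ and subadditive in $g$ by the cocycle relation. If $hL=\Psi(x)$ then $\alpha(g,x)h$ represents $\Psi(gx)$, so $u(gx)\le\log\|\alpha(g,x)\|+\log\|h\|$; taking the infimum over $h$ gives $u(gx)\le u(x)+C(g)$ for a.e.\ $x$ (each $g$), and replacing $(g,x)$ by $(g^{-1},gx)$ gives the reverse inequality $u(x)\le u(gx)+C(g^{-1})$. Thus $u$ is ``Lipschitz along every orbit direction'' with an essentially bounded defect; this is the only place $(L^{\infty})$ is used.

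\textbf{Spectral gap forces exponential integrability (the crux).} I would fix a symmetric, Haar-absolutely-continuous probability measure $\sigma$ on $G$ with generating support (such $\sigma$ exists since $G$ is $\sigma$-compact); by $(SG)$ and Lemma~\ref{l:kesten}, $\pi(\sigma)$ is self-adjoint on $L^{2}_{0}(X)$ with $r\defq\|\pi(\sigma)\|<1$, and after replacing $\sigma$ by a convolution power $\sigma^{*k}$ I may assume $r<\tfrac12$. For $t\ge 0$ put $A_{t}=\{u>t\}$, $a_{t}=\mu(A_{t})$; since $u<\infty$ a.e., $a_{t}\to 0$. Fix small $\eta>0$ and $s$ with $\sigma(\{g:C(g)\le s\})\ge 1-\eta$. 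The orbit estimate together with Fubini gives $g^{-1}x\in A_{t}$ for a.e.\ $x\in A_{t+s}$ and $\sigma$-a.e.\ $g$ with $C(g)\le s$, whence $\pi(\sigma)\mathbf{1}_{A_{t}}\ge(1-\eta)\mathbf{1}_{A_{t+s}}$. Writing $\mathbf{1}_{A_{t}}=a_{t}+F_{t}$ with $F_{t}\in L^{2}_{0}(X)$ and $\|F_{t}\|_{2}^{2}\le a_{t}$, and pairing this inequality with $\mathbf{1}_{A_{t+s}}$ (noting that $\pi(\sigma)F_{t}$ has vanishing integral) yields, once $a_{t}\le(1-\eta)/2$, the bound $\tfrac{1-\eta}{2}a_{t+s}\le\langle\pi(\sigma)F_{t},F_{t+s}\rangle\le r\sqrt{a_{t}a_{t+s}}$, i.e.\ $a_{t+s}\le\big(\tfrac{2r}{1-\eta}\big)^{2}a_{t}$. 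Choosing $\eta<1-2r$ makes this ratio $q<1$, so $a_{t}$ decays exponentially, which gives $e^{\delta u}\in L^{1}(X)$ for $\delta$ small.

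\textbf{Conclusion and main obstacle.} With $F$ as chosen, $\|F\|^{\delta}\le e^{\delta}e^{\delta u}\in L^{1}$, hence $\|F(-)\|\in L^{\delta}(X)$; and since $u\in L^{1}$ (using $t\le e^{t-1}$), integrating $\log\|\alpha^{F}(g,x)\|\le\log\|F(gx)\|+\log\|\alpha(g,x)\|+\log\|F(x)\|$ and using invariance of $\mu$ gives $\int_{X}\log\|\alpha^{F}(g,x)\|\d\mu(x)\le 2\int_{X}\log\|F\|\d\mu+C(g)<\infty$ for all $g$. The main obstacle is the third step: extracting genuine exponential decay of the sublevel sets of $u$ from nothing but the spectral gap and the $(L^{\infty})$ bound. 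The only subtlety there is that a single random-walk step controls $r$ but not $2r$, which is precisely why one passes first to a convolution power $\sigma^{*k}$ before running the sublevel-set estimate; the rest — the measurable selection of $F$ and the elementary inequalities — is routine.
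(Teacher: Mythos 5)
Your proof is correct in substance, but it follows a genuinely different route from the paper's, so let me compare. The paper never selects a distinguished representative in the coset $f(x)L$: it first proves an exhaustion lemma (Lemma~\ref{L:growth}), a direct consequence of the gap saying that starting from any $A_0$ with $\mu(A_0)\ge 1/2$ one can build $A_{n+1}=A_n\cup g_nA_n$ with $g_n$ in a fixed compact set $K$ and $\mu(X\setminus A_n)\le\lambda^n$; it then takes $A_0=\{\|f\|\le M\}$ and defines $F$ by \emph{propagating} $f$ along these translations via the cocycle itself, $F(x)=\alpha(g_n,y)F(y)$ for $x=g_ny$, so that $\|F\|\le MC^{n+1}$ on $A_{n+1}\setminus A_n$ with $C=\sup_{g\in K}\esssup_x\|\alpha(g,x)\|$ (finiteness being obtained by a weak-$*$ continuity/Banach--Steinhaus argument), and chooses $\delta$ with $\lambda C^\delta<1$. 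You replace both ingredients: the coset-minimal norm $u$ plus a measurable selection replaces the explicit propagation, and your sublevel-set inequality $a_{t+s}\le q\,a_t$, obtained by testing $\pi(\sigma)\mathbf{1}_{A_t}$ against $\mathbf{1}_{A_{t+s}}$ and using $\|\pi(\sigma)\|_{L^2_0}<1$ (Lemma~\ref{l:kesten}), is in effect a re-derivation of the same ``gap implies exponential decay'' mechanism as Lemma~\ref{L:growth}, phrased through the Markov operator. What your version buys: you only need $C(g)=\esssup_x\log\|\alpha(g,x)\|$ to be finite for every $g$ and measurable in $g$ (a Fubini argument), sidestepping the paper's uniform bound over a compact set and its weak-$*$ continuity claim; your $F$ is also canonical up to bounded error (near norm-minimal in its coset). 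What the paper's version buys: $F$ is constructed explicitly, with no appeal to the von Neumann/Jankov selection theorem and no need to establish measurability of the fibrewise infimum $u$ --- this is the one loose point in your write-up, since $\{u<c\}$ is a priori only an analytic projection and your parenthetical justification does not quite prove Borelness; on a standard probability space this is harmless (discard a null set and apply the projection/uniformization theorem), but it should be said that way. Finally, the detour through a convolution power $\sigma^{*k}$ to force $r<1/2$ is unnecessary: absorbing the term $a_ta_{t+s}$ once $a_t$ lies below a small threshold $\epsilon$ yields the ratio $\bigl(r/(1-\eta-\epsilon)\bigr)^2$, which is already $<1$ for suitable $\eta,\epsilon$ whenever $r<1$.
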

\begin{lem}[Zimmer~\cite{Zimmer:IHES:84}]
\label{L:growth}
Let $G$ be a lcsc group and $(X,\mu)$ an ergodic probability $G$-space
with spectral gap. Then there is a compact subset $K\se G$ and $0<\lambda<1$
such that for every measurable $A_{0}\subset X$ 
with $\mu(A_{0})\ge 1/2$, there exists a sequence $\{g_n\}$ in $K$
such that the sequence of sets defined by $A_{n+1}=A_{n}\cup g_{n}A_{n}$
satisfies $\mu(A_{n})\ge 1-\lambda^{n}$.
\end{lem}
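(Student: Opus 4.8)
The plan is to reduce the whole statement to a single ``doubling'' estimate for complements. Write $B_n=X\setminus A_n$; since each $g_n$ acts on $X$ by a $\mu$-preserving Borel isomorphism, one has $X\setminus A_{n+1}=B_n\cap g_nB_n$, so the sequence $\mu(B_n)$ is non-increasing and in particular stays $\le\mu(B_0)\le 1/2$. It therefore suffices to produce a compact set $K\se G$ and a constant $0<\lambda<1$ such that every measurable $B\se X$ with $0<\mu(B)\le 1/2$ admits some $g\in K$ with $\mu(gB\cap B)\le\lambda\,\mu(B)$. Indeed, applying this at each stage to $B=B_n$ (and picking $g_n\in K$ arbitrarily when $\mu(B_n)=0$) gives $\mu(B_{n+1})\le\lambda\,\mu(B_n)$, hence by induction $\mu(A_n)=1-\mu(B_n)\ge 1-\lambda^n\mu(B_0)\ge 1-\lambda^n$.

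For the doubling estimate I would feed the mean-zero function $f=\one_B-\mu(B)\in L^2_0(X)$ into the spectral gap inequality~\eqref{e:gap}. A short computation, using that the action preserves $\mu$ and that the Koopman representation is unitary, gives $\|f\|^2=\mu(B)(1-\mu(B))$, $\langle gf,f\rangle=\mu(gB\cap B)-\mu(B)^2$, and $\|gf-f\|^2=2\big(\|f\|^2-\langle gf,f\rangle\big)$. Let $K\se G$ and $\epsilon>0$ be as in~\eqref{e:gap}, where we may shrink $\epsilon$ so that $\epsilon\le 1$. By~\eqref{e:gap} some $g\in K$ satisfies $\|gf-f\|^2\ge\epsilon^2\|f\|^2$, equivalently $\langle gf,f\rangle\le(1-\epsilon^2/2)\|f\|^2$; substituting the formulas above yields
\[
\mu(gB\cap B)\ \le\ \mu(B)\Big(1-\tfrac{\epsilon^2}{2}(1-\mu(B))\Big)\ \le\ \Big(1-\tfrac{\epsilon^2}{4}\Big)\mu(B),
\]
the last inequality because $\mu(B)\le 1/2$. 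Thus $\lambda=1-\epsilon^2/4\in(0,1)$ works, and the lemma follows.

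The argument is elementary once this reduction is made; the genuine input is the spectral gap, whose role is precisely to furnish a \emph{single} compact $K$ and a \emph{single} $\epsilon>0$ valid for all sets $B$ at once, so that the $g_n$ can all be chosen inside the fixed $K$. The only point to keep in mind — and it causes no difficulty — is that in~\eqref{e:gap} the element $g$ may depend on the vector: at the $n$-th step we only need a $g_n$ adapted to $B_n$, which has already been determined by the previous choices $g_0,\dots,g_{n-1}$. (This is, in essence, Zimmer's argument from~\cite{Zimmer:IHES:84}.) So there is no serious obstacle here.
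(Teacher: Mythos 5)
Your proof is correct and follows essentially the same route as the paper: both feed the mean-zero indicator $\one_B-\mu(B)$ into the spectral gap inequality~\eqref{e:gap} and extract the same doubling constant $\lambda=1-\epsilon^2/4$, the only cosmetic difference being that you phrase the estimate via inner products on complements $B_n=X\setminus A_n$ while the paper computes $\mu(gA\triangle A)$ directly. No gap to report.
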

\begin{proof}
Recall from~\eqref{e:gap} that there is $K$ compact and $\epsilon>0$ such that
$$\forall\, \fhi \in L^{2}_{0}(X)\ \exists\, g\in K:\qquad \|\fhi\circ g- \fhi\|\ge \epsilon\cdot \|\fhi\|.$$
Define $\lambda=1-\epsilon^{2}/4$.
For a measurable set $A\subset X$, the function $p_{A}(x)=1_{A}(x)-\mu(A)$
is in $L^{2}_{0}(X)$ and has $\|p_{A}\|^{2}=\mu(A)\cdot (1-\mu(A))$.
If $A,B\subset X$ are of the same measure, then
$\|p_{A}-p_{B}\|^{2}=\mu(A\triangle B)$.
It now follows that for any $A\subset X$ with 
$\mu(A)>1/2$ there exists $g\in K$ such that 
\[
        \mu(gA\cap (X\setminus A))=\frac{1}{2}\mu(gA\triangle A)
        \ge \frac{\epsilon^{2}}{2}\mu(A)(1-\mu(A))\ge \frac{\epsilon^{2}}{4}\mu(X\setminus A)
\]
and therefore $1-\mu(A\cup gA)\ge \lambda (1-\mu(A))$.
Applying this argument inductively starting from $A=A_{0}$, the claimed estimates follow.
\end{proof} 
\begin{proof}[Proof of Proposition~\ref{P:taming}]
As  $\alpha^{f}$ ranges in $L$, for $g\in G$ a.e. $f(gx) L=\alpha(g,x)f(x) L$.
We shall construct a measurable $F:X\to H$ with $\|F\|^{\delta}$
integrable and such that a.e. $F(x) L=f(x) L$. Let $K$, $\epsilon$ and $\lambda$
be as before; we claim that the expression
$$C=\sup_{g\in K}\ \esssup_{x\in X}\, \|\alpha(g,x)\|$$
is finite. Indeed, the map $G\to L^\infty(X)$ defined by $g\mapsto \|\alpha(g,-)\|$ is weak-* continuous, and thus the image of $K$
is weak-*bounded. By the Banach--Steinhaus principle, it is norm bounded, whence the claim.

Choose now $\delta>0$ small enough to ensure $\lambda C^{\delta}<1$ and
$M<\infty$ large enough so that the set
$$A_{0}=\{x\in X : \|f(x)\|\le M\}$$
satisfies $\mu(A_{0})\geq 1/2$. Construct a sequence $\{A_{n}\}$
as in Lemma~\ref{L:growth}. We define $F$ on the conull set
$\bigcup_{n=0}^{\infty} A_{n}\subset X$ by
induction: Let $F(x)=f(x)$ for $x\in A_{0}$, and for $x\in A_{n+1}\setminus A_{n}$
$$F(x)= \alpha(g_{n},y) F(y)$$
where $x=g_{n} y$ with $y\in A_{n}$. Thus $\|F(x)\|\le C \|F(y)\|$ because $g_{n}\in K$.
This gives the estimate
\[
        \|F(x)\|\le M\cdot 1_{A_{0}}(x)+\sum_{n=1}^{\infty} M\cdot C^{n+1}\cdot
        1_{A_{n+1}\setminus A_{n}}(x).
\]
Since $\mu(A_{n+1}\setminus A_{n})\le \lambda^{n}$,
the choice of $\delta$ yields integrability of $\|F\|^{\delta}$.
We claim that $F(x) L=f(x) L$ holds on the conull set
$\bigcup_{n=0}^{\infty} A_{n}$.
Indeed, for $x\in A_{0}$ one has $F(x)=f(x)$; for $x\in A_{n+1}\setminus A_{n}$,
writing $y=g_{n}^{-1} x\in A_{n}$, we have
\[
        F(x) L=\alpha(g_{n},y)F(y) L=\alpha(g_{n},y)f(y) L=f(x) L
\]
using $F(y) L=f(y) L$ in the induction assumption.
\end{proof}
Let $\alpha:G\times X\to H$ be any cocycle satisfying the~$(L^1)$ condition, where
$G,H$ are lcsc groups and $H$ has some measurable norm. Consider the (finite) expression
\begin{equation}\label{e:coc-length}
\ell(g)=\int_{X} \log\|\alpha(g,x)\|\d\mu(x).
\end{equation}
We observe that $\ell$ is subadditive since
\begin{eqnarray*}
        \ell(g_{1}g_{2}) &\le& 
        \int_{X}\log\|\alpha(g_{1},g_{2} x)\|\d\mu(x) +\int_{X}\log\|\alpha(g_{2},x)\|\d\mu(x)\\
        &=&\ell(g_{1})+\ell(g_{2}).
\end{eqnarray*}
In other words, $\exp(\ell)$ is a measurable norm on $G$; moreover,
$\ell$ is bounded on compact sets (Lemma~\ref{L:bdd}).

\begin{rem}\label{R:word-length}
Subadditivity implies also that if $G$ is a finitely generated group with some word-length $\ell_G$,
then $\ell$ admits a linear bound in terms of $\ell_G$.
\end{rem}
\section{Cocycles, Unitary Representations and Invariant Measures}
\label{S:unitary-measures}%
This section contains some general considerations relating quasi-regular
representations and existence of invariant measures. These are used
in the proof of Theorem~\ref{T:coc-Furstenberg}, but seem to be of independent interest.

\medskip

Let $G$ be lcsc group and $(X,\mu)$ an ergodic probability $G$-space.
Denote by $\pi$ the unitary representation on $L^{2}(X)$
and by $\pi_{0}$ its restriction to $L^2_0(X)$.
Let $B$ be a compact metrisable space with some given Borel-regular
probability measure $\nu$ of full support, 
and let  $H<\Homeo(B)$ be a (lcsc) group of homeomorphisms which leaves the 
measure class of $\nu$ invariant. 
Let $\alpha:G\times X\to H$ be a measurable cocycle.
We denote by $\tilde\pi$ the quasi-regular unitary $G$-representation on 
$L^{2}(X\ltimes B,\mu\times\nu)$. Therefore, writing everything explicitly,
$$(\tilde\pi(g^{-1})F)(x,b)\ =\ \Big(\frac{\d\alpha(g,x)^{-1}_{*}\nu}{\d\nu}(b)\Big)^{1/2}
   F(g x, \alpha(g,x)b).$$
Let $\Prob_{\mu}(X\times B)$ denote the space of all probability measures on 
$X\times B$ that project to $\mu$. The $G$-action on $X\ltimes B$ defines
a $G$-action on $\Prob_{\mu}(X\times B)$ since $\mu$ is $G$-invariant.
By disintegration with respect to $\mu$, any measure $\eta\in \Prob_{\mu}(X\times B)$
can be written as
\[
        \eta=\int_{X}(\delta_{x}\times \eta_{x})\d\mu(x)
\]
where $x\mapsto \eta_{x}\in\Prob(B)$ is a measurable map. Such a measure $\eta$
is $G$-invariant if and only if $\eta_{g x}=\alpha(g,x)_{*}\eta_{x}$ holds for all
$g\in G$ and $\mu$-a.e. $x\in X$.

It is straightforward to verify that $\tilde{\pi}$ contains $\one_G$
if and only if $G$ preserves a measure in $\Prob_{\mu}(X\times B)$
that is absolutely continuous with respect to $\mu\times\nu$. We shall
however need a more refined statement relating the existence of arbitrary
$G$-fixed measures in $\Prob_{\mu}(X\times B)$
and the spectral properties of $\tilde{\pi}$, as follows.
\begin{prop} 
\label{P:inv-in-Pm}
If $\pi_{0}\not\succ\one_{G}$ but $\tilde{\pi}\succ \one_{G}$, then
$G$ preserves some measure $\eta\in \Prob_{\mu}(X\times B)$. 
\end{prop}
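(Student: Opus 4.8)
The plan is to turn the almost-invariant vectors of $\tilde\pi$ into an asymptotically $G$-invariant sequence of measures in $\Prob_\mu(X\times B)$, and then to extract a genuine $G$-fixed measure by a weak-$*$ compactness argument. The spectral gap hypothesis $\pi_0\not\succ\one_G$ will enter precisely to guarantee that the resulting measure still has $X$-marginal equal to $\mu$. First we descend to $X$: pick unit vectors $F_n\in L^2(X\ltimes B,\mu\times\nu)$ with $\sup_{g\in K}\|\tilde\pi(g)F_n-F_n\|\to0$ for every compact $K\subseteq G$, and set $\phi_n(x)=\int_B|F_n(x,b)|^2\,d\nu(b)$, so $\phi_n\ge0$ and $\int_X\phi_n\,d\mu=\|F_n\|^2=1$. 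In the fibred picture $L^2(X\ltimes B)=\int_X^{\oplus}L^2(B,\nu)\,d\mu$, the operator $\tilde\pi(g)$ acts by a fibrewise unitary followed by the base translation, so the fibre norms transform by pure translation: $\phi_{\tilde\pi(g)F}=\phi_F\circ g^{-1}$; moreover $\|\phi_F-\phi_{F'}\|_{L^1(X)}\le\big\||F|^2-|F'|^2\big\|_{L^1(\mu\times\nu)}\le2\|F-F'\|_{L^2}$ for unit vectors. Hence $\sup_{g\in K}\|\phi_n\circ g^{-1}-\phi_n\|_{L^1(X)}\to0$: the probability densities $\phi_n$ are asymptotically invariant under the Koopman action of $G$ on $L^1(X)$.

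Next we use the spectral gap. The functions $\sqrt{\phi_n}$ are unit vectors in $L^2(X,\mu)=\CC\one\oplus L^2_0(X)$, and from $|\sqrt a-\sqrt b|^2\le|a-b|$ one gets $\|\sqrt{\phi_n}\circ g^{-1}-\sqrt{\phi_n}\|_{L^2}^2\le\|\phi_n\circ g^{-1}-\phi_n\|_{L^1}$, so they too are asymptotically $G$-invariant (uniformly on compacta). Writing $\sqrt{\phi_n}=c_n\one+\xi_n$ with $c_n\ge0$ and $\xi_n\in L^2_0(X)$, the orthogonal component $\xi_n$ is itself asymptotically invariant; since $\pi_0\not\succ\one_G$ this forces $\|\xi_n\|\to0$, whence $c_n\to1$, $\sqrt{\phi_n}\to\one$ in $L^2$, and finally $\phi_n\to1$ in $L^1(X)$.

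Now we lift back and pass to the limit. Put $\eta_n=|F_n|^2\,(\mu\times\nu)\in\Prob(X\times B)$; its $X$-marginal is $\phi_n\,\mu$. Because $\tilde\pi$ is the (quasi-regular, hence Koopman) representation of the measure-class preserving action of $G$ on $X\ltimes B$, one has $g_*\eta_n=|\tilde\pi(g)F_n|^2\,(\mu\times\nu)$, so $\|g_*\eta_n-\eta_n\|_{\mathrm{TV}}\le2\|\tilde\pi(g)F_n-F_n\|_{L^2}\to0$, uniformly on compacta. Disintegrating over $\mu$, write $\eta_n=\int_X(\delta_x\times\lambda^n_x)\,\phi_n(x)\,d\mu(x)$ with $\lambda^n_x\in\Prob(B)$, and set $\hat\eta_n=\int_X(\delta_x\times\lambda^n_x)\,d\mu(x)\in\Prob_\mu(X\times B)$; then $\|\hat\eta_n-\eta_n\|_{\mathrm{TV}}=\|\phi_n-1\|_{L^1}\to0$, hence also $\|g\cdot\hat\eta_n-\hat\eta_n\|_{\mathrm{TV}}\to0$ for every $g$, where $g\cdot\hat\eta_n=g_*\hat\eta_n$ again lies in $\Prob_\mu(X\times B)$ since $\mu$ is $G$-invariant. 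View $\Prob_\mu(X\times B)$ as the set of essentially-bounded weak-$*$ measurable maps $x\mapsto\lambda_x\in\Prob(B)$ inside the unit ball of $L^1(X,C(B))^*$: this is weak-$*$ compact, metrisable (by separability of $L^1(X,C(B))$) and convex, and for each fixed $g$ the affine map $\Lambda\mapsto g\cdot\Lambda$ is weak-$*$ continuous, being the adjoint of an isometry of $L^1(X,C(B))$. A weak-$*$ cluster point $\eta$ of $(\hat\eta_n)$ then lies in $\Prob_\mu(X\times B)$, and for every $g\in G$ we get $g\cdot\eta=\lim g\cdot\hat\eta_n=\lim\hat\eta_n=\eta$; that is, $\eta$ is $G$-invariant, which is the desired conclusion.

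The delicate point — and what I expect to be the main obstacle — is the very last step: we need the limiting measure to be simultaneously $G$-invariant and to have $X$-marginal $\mu$, and these requirements pull in opposite directions. Invariance of a cluster point is only available in the topology in which the $G$-action is continuous, namely the weak-$*$ topology of $L^1(X,C(B))^*$; but in that topology a sequence converging in total variation need \emph{not} converge weak-$*$, because a test function $F\in L^1(X,C(B))$ may have $x\mapsto\|F(x)\|_{C(B)}$ merely integrable rather than bounded. This is exactly why one must first invoke the spectral gap to obtain $\phi_n\to1$ and then replace $\eta_n$ by the renormalised $\hat\eta_n$, whose disintegration fibres (and those of $g\cdot\hat\eta_n$) are probability measures: with this uniform bound on fibre masses, approximating $F$ by bounded maps upgrades $\|g\cdot\hat\eta_n-\hat\eta_n\|_{\mathrm{TV}}\to0$ to $\langle g\cdot\hat\eta_n-\hat\eta_n,\,F\rangle\to0$ for all $F$, which is what the cluster-point argument consumes. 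A secondary point, used above, is the identity $|\tilde\pi(g)F|^2\,(\mu\times\nu)=g_*\big(|F|^2\,(\mu\times\nu)\big)$, i.e. the recognition of $\tilde\pi$ as a bona fide Koopman representation of the action on $X\ltimes B$.
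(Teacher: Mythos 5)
Your proposal is correct, and its analytic core coincides with the paper's proof: you descend the almost-invariant vectors $F_n$ to $X$ via the fibrewise norms ($f_n=\sqrt{\phi_n}$ in the paper's notation), use the elementary inequality $|a-b|^2\le|a^2-b^2|$ plus Cauchy--Schwarz to make these almost invariant for $\pi$, invoke $\pi_0\not\succ\one_G$ to force $f_n\to\mathbf{1}$, and then treat the measures $|F_n|^2\,d(\mu\times\nu)$ as an almost-invariant family whose limit is the desired $\eta$. The only genuine divergence is in the limit-extraction step. The paper never leaves the hands-on setting: it fixes countable families $\psi_i:X\to[0,1]$ and $\fhi_j:B\to[0,1]$, extracts a diagonal subsequence so that $\ip{\psi_i\otimes\fhi_j}{F_{n_k}^2}$ converges, defines $\eta$ as the resulting positive normalised functional, and reads off the marginal condition from $f_n\to\mathbf{1}$ and the invariance from the same $2d_{n_k}$-estimate you use; because its test functions are bounded, the tension you identify between total-variation smallness and weak-$*$ convergence never arises there. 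You instead renormalise first (replacing $\eta_n$ by $\hat\eta_n\in\Prob_{\mu}(X\times B)$, which is where the spectral gap enters for you) and then apply Banach--Alaoglu in $L^1(X,C(B))^*$, upgrading TV-smallness to weak-$*$ smallness by truncation using the uniform bound on fibre masses. Your route is a bit heavier on machinery — it quietly uses the identification of $\Prob_\mu(X\times B)$ with a weak-$*$ closed convex subset of $L^1(X,C(B))^*$ (so that the cluster point is again a measure projecting to $\mu$), and the measurability of $F\mapsto F\circ g$ on $L^1(X,C(B))$ — but in exchange it cleanly separates the two requirements (correct marginal, then invariance) and makes the role of the spectral gap more transparent; the paper's diagonal argument is more elementary and self-contained. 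Both are complete proofs, and your handling of the delicate TV-versus-weak-$*$ point is exactly right.
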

\begin{proof}
Suppose that $F_{n}$ is a sequence of unit vectors in $L^{2}(X\times B)$ with 
\[
        d_{n}(g)=\|\tilde\pi(g)F_{n}-F_{n}\| \lra 0
\] 
uniformly on compact sets.
Upon replacing $F_{n}$ by $|F_{n}|$ the value $d_{n}$ will only decrease,
so we can assume $F_{n}\ge 0$.
Consider the sequence of unit vectors $\{f_{n}\}\in L^{2}(X)$ given by
\[
        f_{n}(x)=\left(\int_{B}F_{n}(x,b)^{2}\d\nu(b)\right)^{1/2}.
\]
We claim that $f_{n}$ is a sequence of almost invariant unit vectors for $\pi$,
and thus the assumption  $\pi_{0}\not\succ \one_{G}$ gives
\begin{equation}
\label{e:fn-to-1}
        \| f_{n}-{\bf 1}\|\lra 0.
\end{equation}
To verify almost invariance of $\{f_{n}\}$, note 
that in view of the elementary inequality $|a-b|^{2}\le |a^{2}-b^{2}|$ 
we have
\begin{align*}
        \left\| f_{n}-\pi(g)f_{n} \right\|^{2} &\le \int_{X} \left|f_{n}^{2}-(\pi(g)f_{n})^{2}\right|\d\mu\\
        &\le \int_{X}\int_{B} \left| F_{n}^{2}-(\tilde\pi(g)F_{n})^{2}\right|\d\mu\d\nu \\
        &=\int_{X}\int_{B} |F_{n}+\tilde\pi(g)F_{n}|\cdot\left|F_{n}-\tilde\pi(g)F_{n}\right|\d\mu\d\nu\\
        &\le \left\|F_{n}+\tilde\pi(g) F_{n}\right\|\cdot\left\|F_{n}-\tilde\pi(g) F_{n}\right\|\\
        &\le 2\cdot d_{n}(g)\ \longrightarrow 0
\end{align*}
uniformly on compact sets in $G$; hence~\eqref{e:fn-to-1} follows.
We shall now define a probability measure $\eta$ on $X\times B$ 
as a functional on $L^{1}(X,\mu)\otimes C(B)$.
Let $\psi_{i}:X\to[0,1]$ be a sequence of measurable functions spanning
a dense subspace in $L^{1}(X,\mu)$, and $\fhi_{j}:B\to [0,1]$
be a sequence of continuous functions spanning a dense subspace in $C(B)$.
Assume that $\psi_{0}={\bf 1}_{X}$ and $\fhi_{0}={\bf 1}_{B}$ constant one functions.
For each $i,j$ the following sequence in $n$ is non-negative and satisfies
\begin{eqnarray*}
        \ip{\psi_{i}\otimes\fhi_{j}}{F_{n}^{2}}
        &=&\int_{X}\int_{B} \psi_{i}(x)\fhi_{j}(b)F_{n}(x,b)^{2}\d\nu(b)\d\mu(x)\\
        &\le& \int_{X} f_{n}^{2}\d\mu
        \quad \lra \quad 1.
\end{eqnarray*}
Applying the diagonal process, one finds a subsequence $\{n_{k}\}$ along which 
the LHS above converges for all $i,j$. 
We can now define $\eta$ by
\[
        \int_{X\times B}\psi_{i}(x)\fhi_{j}(b)\d\eta(x,b)
        =\lim_{k\to\infty}\ip{\psi_{i}\otimes\fhi_{j}}{F_{n_{k}}^{2}}.
\]
More precisely, extending $\eta$ linearly to the span of $\psi_{i}\otimes\fhi_{j}$
we note that it is a positive, normalised functional satisfying
\[
        \ip{\psi_{i}\otimes {\bf 1}_{B}}{\eta}
        =\lim_{k\to\infty} \int_{X}\psi_{i} f_{n}^{2}\d\mu
        =\int_{X}\psi_{i}\d\mu
\]
Thus it corresponds to a measure $\eta$ on $X\times B$ projecting onto $\mu$.
This measure is $G$-invariant; indeed for fixed $\psi_{i}$, $\fhi_{j}$ we have
\[
        \ip{\psi_{i}\otimes\fhi_{j}}{g\eta-\eta}
        =\lim_{n\to\infty}
        \ip{\psi_{i}\otimes\fhi_{j}}{(\tilde\pi(g) F_{n_{k}})^{2}-F_{n_{k}}^{2}}
\]
whilst
\begin{multline*}
\left|\ip{\psi_{i}\otimes\fhi_{j}}{(\tilde\pi(g) F_{n_{k}})^{2}-F_{n_{k}}^{2}}\right|\\
\le \int_{X}\int_{B}\psi_{i}\fhi_{j}
\left|\tilde{\pi}(g)F_{n_{k}}+F_{n_{k}}\right|
\cdot\left|\tilde{\pi}(g)F_{n_{k}}-F_{n_{k}}\right|
\d\mu\d\nu\\
\le \left\|\tilde{\pi}(g)F_{n_{k}}+F_{n_{k}} \right\|
\cdot\left\|\tilde{\pi}(g)F_{n_{k}}-F_{n_{k}} \right\|
\le 2d_{n_{k}}\lra 0.
\end{multline*}
\end{proof}

We now specialise to the case where: $H=\SL_{d}(\RR)$, $B=\PP^{d-1}$ is
the projective space and $\nu$ is the unique $\SO_{d}(\RR)$-invariant probability
measure on $B$.
\begin{lem}[Zimmer's Cocycle Reduction]
\label{L:no-inv-in-Pm}
Let $G$ be a lcsc group, $(X,\mu)$ an ergodic probability $G$-space and
$\alpha:G\times X\to \SL_{d}(\RR)$ a cocycle.
If the corresponding $G$-action on $X\ltimes \PP^{d-1}$
preserves a probability measure $\eta$ projecting to $\mu$, then
either
\begin{enumerate}
\item
        $\alpha$ is cohomologous to a cocycle $\alpha^{\prime}:G\times X\to \SO_d(\RR)$, or 
\item
        $\alpha$ is cohomologous to a cocycle $\alpha^{\prime}:G\times X\to L$
        where $L$ has a finite index subgroup that is reducible on $\RR^{d}$.
\end{enumerate}
If $\eta\prec \mu\times\nu$, then case~{\rm (1)} holds.
\end{lem}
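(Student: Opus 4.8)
The plan is to use the invariant measure $\eta$ to conjugate $\alpha$ into the $\SL_d(\RR)$-stabiliser of a single probability measure on $\PP^{d-1}$, and then appeal to Furstenberg's description of such stabilisers. First I would disintegrate $\eta=\int_X(\delta_x\times\eta_x)\d\mu(x)$ over $X$, with $x\mapsto\eta_x\in\Prob(\PP^{d-1})$ measurable; as recalled in Section~\ref{S:unitary-measures}, $G$-invariance of $\eta$ means $\eta_{gx}=\alpha(g,x)_*\eta_x$ for every $g\in G$ and $\mu$-a.e.\ $x$. Composing $x\mapsto\eta_x$ with the quotient map $\Prob(\PP^{d-1})\to\Prob(\PP^{d-1})/\SL_d(\RR)$ then gives a $G$-invariant measurable map; since the $\SL_d(\RR)$-action on $\Prob(\PP^{d-1})$ is tame by Theorem~\ref{thm:tame:alg}(ii), that quotient is countably separated, and ergodicity of $G\acts X$ forces $x\mapsto\eta_x$ to take values, up to a null set, in a single orbit $\SL_d(\RR)\cdot\nu_0$ for some fixed $\nu_0\in\Prob(\PP^{d-1})$.

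Using tameness once more, the orbit $\SL_d(\RR)\cdot\nu_0$ is Borel isomorphic to $\SL_d(\RR)/L$ with $L=\mathrm{Stab}_{\SL_d(\RR)}(\nu_0)$, and this homogeneous space carries a measurable section into $\SL_d(\RR)$; composing it with $x\mapsto\eta_x$ yields a measurable map $\fhi:X\to\SL_d(\RR)$ with $\fhi(x)_*\nu_0=\eta_x$ a.e. The conjugate cocycle then satisfies $\alpha^{\fhi}(g,x)_*\nu_0=\fhi(gx)^{-1}_{*}\alpha(g,x)_*\fhi(x)_*\nu_0=\fhi(gx)^{-1}_{*}\eta_{gx}=\nu_0$, so $\alpha^{\fhi}$ ranges in $L$, and it remains only to understand the group $L$.

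Here I would invoke Furstenberg's lemma on stabilisers of measures on projective space (see~\cite{Furstenberg:Poisson:63} and~\cite[Ch.~3]{Zimmer:book:84}). If $\nu_0$ assigns zero mass to every proper projective subspace, then $L$ is compact, and conjugating by a fixed element of $\SL_d(\RR)$ that carries $L$ into the maximal compact $\SO_d(\RR)$ puts us in case~(1). Otherwise, let $k_0$ be the smallest dimension of a proper subspace of positive $\nu_0$-mass, and let $c>0$ be the largest $\nu_0$-mass of a $k_0$-dimensional projective subspace — this maximum is attained because $W\mapsto\nu_0(\PP(W))$ is upper semicontinuous on the compact Grassmannian. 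Two distinct subspaces realising $c$ meet in dimension $<k_0$, hence in a $\nu_0$-null set, so there are at most $1/c$ of them; they form a finite nonempty family permuted by $L$, and the finite-index subgroup of $L$ fixing each member preserves a proper nonzero subspace and is therefore reducible on $\RR^d$ — this is case~(2), with this $L$. For the last assertion, if $\eta\prec\mu\times\nu$ then $\eta_x\prec\nu$ a.e.; since the diffeomorphisms of $\PP^{d-1}$ induced by $\SL_d(\RR)$ preserve the Lebesgue measure class, one has $\nu_0=\fhi(x)^{-1}_{*}\eta_x\prec\fhi(x)^{-1}_{*}\nu\sim\nu$, and $\nu$ is null on every proper projective subspace, so $\nu_0$ is non-degenerate and only case~(1) can occur.

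The conceptual input is Furstenberg's dichotomy for measure stabilisers, which is quoted; the steps I expect to need the most care in a full write-up are the measurable selection of the conjugating map $\fhi$ from the orbit structure (justified by tameness together with the existence of Borel sections over locally closed orbits) and the routine upper-semicontinuity and counting bookkeeping in the degenerate case.
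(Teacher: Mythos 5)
Your proposal is correct and follows essentially the same route as the paper: disintegrate $\eta$ over $\mu$, use tameness of the $\SL_d(\RR)$-action on $\Prob(\PP^{d-1})$ together with ergodicity to place the fibre measures $\eta_x$ in a single orbit, conjugate $\alpha$ into the stabiliser of a base measure $\nu_0$, and conclude by Furstenberg's Lemma, with the absolute continuity assumption forcing the compact (hence $\SO_d(\RR)$) case. The only difference is that you spell out the degenerate branch of Furstenberg's Lemma (minimal-dimension subspaces of maximal mass, finite family permuted by the stabiliser) where the paper simply cites~\cite[Cor 3.2.2]{Zimmer:book:84}, which is harmless.
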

\begin{proof}
A $G$-invariant measure has the form $\eta=\int_{X} (\delta_{x}\times\eta_{x})\d\mu(x)$
with 
\[
        \eta_{g x}=\alpha(g,x)_{*}\eta_{x}\qquad \mu\text{-a.e. on } X.
\]
The action of $H=\SL_{d}(\RR)$ on the space $\Prob(\PP^{d-1})$ of probability
measures is tame by Theorem~\ref{thm:tame:alg}.
In view of the ergodicity of $G$-action on $(X,\mu)$ this implies that
$\mu$-almost all $\eta_{x}$ lie on a single $H$-orbit: 
\[
        \eta_{x}=\fhi(x)\eta_{0}
\] 
for some $\eta_{0}\in \Prob(\PP^{d-1})$ and a measurable map $\fhi:X\to H$.
Denoting by $H_{0}=\{h\in \SL_{d}(\RR)\ :\  h_{*}\eta_{0}=\eta_{0}\}$ the stabiliser
of this measure, we get that the cocycle
\[
        \alpha^{\prime}(g,x)=\fhi(g x)^{-1}\,\alpha(g,x)\,\fhi(x)
\]
ranges in $H_{0}$.
Furstenberg's Lemma, which can be found \emph{e.g.} as Corollary~3.2.2 in~\cite[Cor 3.2.2]{Zimmer:book:84},
implies that $H_{0}$ is either compact or virtually reducible on $\RR^{d}$, and that moreover the former
case holds when $\eta_{0}\prec \nu$. It remains only to observe that $\eta\prec \mu\times\nu$ implies $\eta_{0}\prec \nu$
and to recall that any compact subgroup of $\SL_{d}(\RR)$ can be conjugated into $\SO_d(\RR)$.
\end{proof}

\section{Random Walks and a Furstenberg Condition}
\label{S:RW}
This section investigates the growth of matrix-valued cocycles
along random walks. The main result is a cocycle analogue
of the famous Furstenberg condition for positivity
of the top Lyapunov exponent. We recall the definition
of the following integrability condition:
\begin{equation}
\tag{$L^{1}$}
        \int_X\log\|\alpha(g,x)\|\d\mu(x)\ <\ \infty\qquad(\forall\,g\in G).
\end{equation}
Our presentation is based on~\cite{Furman:RT:02}.

\begin{thm}[Cocycle Version of Furstenberg's Theorem]
\label{T:coc-Furstenberg}
Let $G$ be a lcsc group, $(X,\mu)$ an ergodic probability $G$-space with spectral gap
and $\alpha:G\times X\to \SL_{d}(\RR)$ a cocycle satisfying~$(L^1)$. Suppose that $\alpha$
is not equivalent to a cocycle ranging into a compact or virtually
reducible subgroup of $\SL_{d}(\RR)$.

Then, for any absolutely continuous generating measure $\sigma$ on $G$:
\[
        \liminf_{n\to\infty} \int_{G}\int_{X} \frac{1}{n}\log\|\alpha(g,x)\|\d\mu(x)\d\sigma^{*n}(g)>0.
\]
\end{thm}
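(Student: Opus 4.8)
The plan is to derive the positivity of the exponent from a spectral-gap property of the quasi-regular representation $\tilde\pi$ of $G$ on $L^{2}(X\ltimes\PP^{d-1},\mu\times\nu)$ from Section~\ref{S:unitary-measures} (here $\nu$ is the $\SO_{d}(\RR)$-invariant measure on $\PP^{d-1}$ and the skew product is the one attached to $\alpha$), by feeding the constant function $\one$ into $\tilde\pi$ and applying Jensen's inequality. Put
\[
\ell_{n}\ =\ \int_{G}\int_{X}\log\|\alpha(g,x)\|\,\d\mu(x)\,\d\sigma^{*n}(g)\ \in\ [0,+\infty].
\]
It is enough to produce constants $c>0$ and $C'$ with $\ell_{n}\ge c\,n-C'$ for every $n$, since this forces $\liminf_{n}\ell_{n}/n\ge c>0$.

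First I would verify that $\tilde\pi\not\succ\one_{G}$. By hypothesis $\alpha$ is not equivalent to a cocycle into a compact or virtually reducible subgroup of $\SL_{d}(\RR)$, so Zimmer's cocycle reduction (Lemma~\ref{L:no-inv-in-Pm}) shows that $G\acts X\ltimes\PP^{d-1}$ admits no invariant probability measure projecting to $\mu$. Since $G\acts X$ has a spectral gap we have $\pi_{0}\not\succ\one_{G}$, so Proposition~\ref{P:inv-in-Pm}, read contrapositively, gives $\tilde\pi\not\succ\one_{G}$. By Lemma~\ref{l:kesten} applied to the absolutely continuous generating measure $\sigma$, the spectral radius $\rho=\|\tilde\pi(\sigma)\|_{\mathrm{sp}}$ satisfies $\rho<1$. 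Fixing $\rho'\in(\rho,1)$ there is a constant $C$ with $\|\tilde\pi(\sigma)^{n}\|\le C(\rho')^{n}$ for all $n$, whence also $\big\|\int_{G}\tilde\pi(g^{-1})\,\d\sigma^{*n}(g)\big\|=\big\|(\tilde\pi(\sigma)^{n})^{*}\big\|\le C(\rho')^{n}$.

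The decisive step is the identity, read off from the explicit formula for $\tilde\pi$ together with the standard transformation rule $\frac{\d(h^{-1})_{*}\nu}{\d\nu}([v])=\|hv\|^{-d}$ for $h\in\SL_{d}(\RR)$ and $\|v\|=1$:
\[
(\tilde\pi(g^{-1})\one)(x,[v])\ =\ \|\alpha(g,x)v\|^{-d/2}\ \ge\ \|\alpha(g,x)\|^{-d/2}\qquad(\|v\|=1),
\]
the inequality because $\|\alpha(g,x)v\|\le\|\alpha(g,x)\|_{\mathrm{op}}\le\|\alpha(g,x)\|$. This is a strictly positive function on $X\times\PP^{d-1}$, and integrating it (Tonelli) against $\sigma^{*n}\times\mu\times\nu$ gives
\[
0\ <\ \int_{G}\!\int_{X}\!\int_{\PP^{d-1}}(\tilde\pi(g^{-1})\one)\,\d\nu\,\d\mu\,\d\sigma^{*n}\ =\ \Big\langle\int_{G}\tilde\pi(g^{-1})\,\d\sigma^{*n}(g)\,\one,\ \one\Big\rangle\ \le\ C(\rho')^{n}.
\]
As $\log$ is concave, Jensen's inequality on the probability space $\big(G\times X\times\PP^{d-1},\ \sigma^{*n}\times\mu\times\nu\big)$ yields
\[
\log\!\big(C(\rho')^{n}\big)\ \ge\ \int_{G}\!\int_{X}\!\int_{\PP^{d-1}}\log(\tilde\pi(g^{-1})\one)\,\d\nu\,\d\mu\,\d\sigma^{*n}\ \ge\ -\frac{d}{2}\int_{G}\!\int_{X}\log\|\alpha(g,x)\|\,\d\mu\,\d\sigma^{*n}\ =\ -\frac{d}{2}\,\ell_{n}.
\]
Hence $\ell_{n}\ge -\frac{2}{d}\big(\log C+n\log\rho'\big)$, a bound of the form $\ell_{n}\ge c\,n-C'$ with $c=-\frac{2}{d}\log\rho'>0$ (the case $\ell_{n}=+\infty$ being trivial), which is what was needed.

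The only real obstacle I foresee is locating this argument inside the apparatus already developed: the substantive ingredients — manufacturing a genuine invariant measure from almost invariant vectors (Proposition~\ref{P:inv-in-Pm}) and the Furstenberg dichotomy for stabilisers of measures on $\PP^{d-1}$ (Lemma~\ref{L:no-inv-in-Pm}) — are at hand, and the one genuinely new observation is that $\tilde\pi(g^{-1})\one$ \emph{is} a fixed power of the matrix-norm cocycle, so that a single invocation of the spectral gap of $\tilde\pi$ replaces any Oseledets-type analysis along the random walk. The remaining points are routine bookkeeping: Jensen applies cleanly because $\tilde\pi(g^{-1})\one$ is positive with integral at most $C(\rho')^{n}$; the pointwise comparison $\log(\tilde\pi(g^{-1})\one)(x,[v])\ge-\frac{d}{2}\log\|\alpha(g,x)\|$ together with hypothesis~$(L^{1})$ keeps all the integrals in $(-\infty,+\infty]$; and if $\sigma$ is such that some $\ell_{n}=+\infty$ the conclusion holds vacuously.
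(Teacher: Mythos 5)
Your proof is correct and takes essentially the same route as the paper: spectral gap for $\tilde\pi$ on $L^{2}(X\ltimes\PP^{d-1})$ via Proposition~\ref{P:inv-in-Pm}, Lemma~\ref{L:no-inv-in-Pm} and the Kesten characterisation (Lemma~\ref{l:kesten}), followed by a comparison of $\log\|\alpha(g,x)\|$ with the square-root Radon--Nikod\'ym cocycle $\tilde\pi(g^{-1})\one$ and an application of Jensen's inequality. The only cosmetic difference is that you use the exact pointwise formula $(\tilde\pi(g^{-1})\one)(x,[v])=\|\alpha(g,x)v\|^{-d/2}$ and apply Jensen on all of $G\times X\times\PP^{d-1}$, whereas the paper first integrates over the projective fibre and invokes the bound via $\ro_{\max}$; the constants differ but the positivity conclusion is identical.
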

\noindent
This result will be applied when $\sigma$ satisfies
\[
        \int_{G}\int_{X} \log\|\alpha(g,x)\|\d\mu(x)\d\sigma(g)<\infty,
\]
in which case the limit in the theorem is finite and will be denoted by $\lambda_{1}=\lambda_{1}(\alpha,\sigma)$.
We begin with some preparations and recall the subadditive function $\ell$ from~\eqref{e:coc-length}.

\begin{lem}
\label{L:goodmeas}
There exist symmetric absolutely continuous probability measures $\sigma$ of full support on $G$
such that $\ell\in L^{1}(G,\sigma)$, i.e.:
\begin{equation}\label{e:mu-moment}
        \int_{G} \int_X \log\|\alpha(g,x)\|\d\mu(x)\d\sigma(g) <\infty.
\end{equation}
\end{lem}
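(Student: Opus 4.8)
The plan is to build $\sigma$ as a suitable mixture of measures whose moments we can control, using the fact from Remark~\ref{R:word-length} that $\ell$ is dominated by a word-length function when $G$ is discrete, and more generally that $\ell$ is subadditive and bounded on compact sets by Lemma~\ref{L:bdd}. First I would fix a proper compact generating neighbourhood, or in the finitely generated case a finite symmetric generating set $S$, so that every $g\in G$ has a ``length'' $|g|$ and $\ell(g)\le C\cdot|g|$ for some constant $C$; in the general lcsc case one uses a compactly supported symmetric $\sigma_0$ whose support generates $G$ and sets $|g|=\inf\{k : g\in(\mathrm{supp}\,\sigma_0)^k\}$, noting $\ell$ is bounded on $(\mathrm{supp}\,\sigma_0)^k$ by $kC$ with $C=\sup_{\mathrm{supp}\,\sigma_0}\ell$.

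Next I would choose a symmetric absolutely continuous probability measure $\sigma$ with full support whose tail with respect to $|\cdot|$ decays fast enough to kill the at most geometric growth of $\ell$. Concretely, write $\sigma=\sum_{k\ge 1} 2^{-k}\sigma_k$ where $\sigma_k$ is a symmetric absolutely continuous probability measure supported in the compact set $(\mathrm{supp}\,\sigma_0)^k$ (for instance an appropriately normalised restriction of Haar measure there, smoothed if necessary); this $\sigma$ is absolutely continuous, symmetric, and of full support since $\bigcup_k(\mathrm{supp}\,\sigma_0)^k=G$. Then
\begin{equation*}
\int_G \ell(g)\,\d\sigma(g)\ =\ \sum_{k\ge 1} 2^{-k}\int_G \ell(g)\,\d\sigma_k(g)\ \le\ \sum_{k\ge 1} 2^{-k}\cdot kC\ <\ \infty,
\end{equation*}
which is precisely~\eqref{e:mu-moment} after unwinding the definition $\ell(g)=\int_X\log\|\alpha(g,x)\|\d\mu(x)$ via Fubini (the integrand is nonnegative since $\|\cdot\|\ge 1$, so Tonelli applies with no integrability worry). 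The only point to check is that each $\int_G\ell\,\d\sigma_k$ is finite, which holds because $\ell$ is measurable and bounded on the compact support of $\sigma_k$ by Lemma~\ref{L:bdd}.

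The main obstacle, such as it is, is purely bookkeeping: making sure the auxiliary measures $\sigma_k$ can be chosen simultaneously absolutely continuous, symmetric, and with union of supports all of $G$, while keeping the geometric weighting $2^{-k}$ compatible with the linear-in-$k$ bound on $\ell$ over $(\mathrm{supp}\,\sigma_0)^k$. If one prefers to avoid the lcsc technicalities, one may first reduce to the case that the support of $\sigma_0$ is a symmetric compact neighbourhood $U$ of the identity with $U^2$ also a neighbourhood, so that $(U)^k$ is increasing and exhausts $G$; then $\ell\le kC$ on $U^k$ with $C=\sup_U\ell$, and the estimate above goes through verbatim. This establishes the lemma.
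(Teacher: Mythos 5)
Your argument establishes the lemma only when $G$ is compactly generated, and that is not a hypothesis of the statement. The step that fails is the claimed reduction ``in the general lcsc case one uses a compactly supported symmetric $\sigma_0$ whose support generates $G$'': a locally compact second countable group need not be compactly generated (the additive group $\QQ_p$ is a standard example, and taking a product $\Lambda\times A$ in which $\Lambda$ acts ergodically with spectral gap while an arbitrary lcsc group $A$ acts trivially shows that the spectral gap hypothesis of Theorem~\ref{T:coc-Furstenberg} does not force compact generation either). For such $G$ no compactly supported measure has generating support, $\bigcup_k(\mathrm{supp}\,\sigma_0)^k$ is a proper open subgroup, your length $|g|$ is infinite outside it, and the measure $\sigma=\sum_k 2^{-k}\sigma_k$ you build fails to have full support. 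Similarly, a countable discrete $G$ need not be finitely generated, so the ``finite symmetric generating set $S$'' is not available in general (though the discrete case is indeed easy: weight an enumeration $\{g_n\}$ of $G$ by $2^{-n}(1+\ell(g_n))^{-1}$ and normalise). The pieces you do use correctly --- subadditivity of $\ell$, boundedness of $\ell$ on compact sets via Lemma~\ref{L:bdd}, and Tonelli thanks to $\|\cdot\|\ge 1$ --- are fine, but the word-length scaffolding restricts the generality.

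The repair is to drop the word metric altogether, and it is exactly what the paper does: by $\sigma$-compactness write $G=\bigcup_n g_nU$ for a compact neighbourhood $U$ of the identity and a countable set $\{g_n\}$, and set $\sigma'=\sum_n 2^{-n}(1+\ell(g_n))^{-1}\,m_{g_nU}$, where $m_A$ denotes Haar measure restricted to $A$. Subadditivity gives $\ell\le \ell(g_n)+\sup_U\ell$ on $g_nU$, so $\ell\in L^1(G,\sigma')$ with no assumption on $G$; one then normalises and symmetrises, noting that inversion preserves the Haar measure class, so absolute continuity and full support survive. (The same symmetrisation caveat applies to your ``normalised restriction of Haar to $U^k$'', which is symmetric only when $G$ is unimodular; your parenthetical ``smoothed if necessary'' should be replaced by this explicit symmetrisation.) In short, where you take geometric weights $2^{-k}$ and compensate by the linear bound $\ell\le kC$ on $U^k$ --- a bound that exists only under compact generation --- the paper chooses weights tailored to the actual size of $\ell$ on each piece of an arbitrary countable cover, which is what makes the lemma hold for every lcsc group. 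Within the compactly generated setting (which does cover the Kazhdan applications of the paper, but not the stated generality of Theorems~\ref{T:cocycle1} and~\ref{T:coc-Furstenberg}) your argument is correct.
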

\noindent
(In the case where $G$ is discrete, this is obvious.)
\begin{proof}
Let $U$ be a compact neighbourhood of the identity in $G$; in particular,
$m_{G}(gU)=m_{G}(U)<\infty$ for all $g$. By Lemma~\ref{L:bdd}, $\ell$ is bounded on $U$.
Choose a countable set $\{g_{n}\}$ so that $G=\bigcup g_{i}U$ and let 
$\sigma'=\sum 2^{-n}\ell(g_{i})^{-1}\cdot m_{g_{n}U}$ where $m_{A}$
is the restriction of $m_{G}$ to a Borel subset $A\subset G$.
Then $\sigma'$ is a finite positive measure, equivalent to $m_{G}$,
and $\ell\in L^{1}(G,\sigma')$. We may now take $\sigma$ to be the 
normalised symmetrised measure
\[
        \sigma(E)= (\sigma'(E)+\sigma'(E^{-1})) \big/ 2\sigma'(G) \qquad (E\subset G).
\]
\end{proof}

\begin{proof}[Proof of Theorem~\ref{T:coc-Furstenberg}]
We consider the $G$-space $X\ltimes \PP^{d-1}$ as in Lemma~\ref{L:no-inv-in-Pm}
and recall that we chose for $\nu$ the (unique) $\SO_d(\RR)$-invariant probability measure on
$\PP^{d-1}$.
Consider the quasi-regular $G$-representation $\tilde{\pi}$
on $L^{2}(X\ltimes\PP^{d-1},\mu\times\nu)$ defined in Section~\ref{S:unitary-measures}.
Applying Proposition~\ref{P:inv-in-Pm} and Lemma~\ref{L:no-inv-in-Pm},
we deduce that $\tilde\pi$ has a spectral gap. By the Kesten-type characterization
(Lemma~\ref{l:kesten}), it follows
\begin{equation}\label{e:SG:pi}
\|\tilde\pi(\sigma)\|_\mathrm{sp}<1
\end{equation}
for an arbitrary absolutely continuous generating probability measure $\sigma$ on $G$.
This gap will allow us to estimate the growth of the cocycle using the following lemma;
for shorter notation, we denote by
\[
        \ro(h,\xi)=\frac{\d h^{-1}_*\nu}{\d\nu}(\xi)        
\]
the Radon--Nikod\'ym derivative for $h\in \SL_{d}(\RR)$, $\xi\in\PP^{d-1}$.

\begin{lem}
\[
        \|h\|\ge \left(\int_{\PP^{d-1}}\sqrt{\ro(h,\xi)}\d\nu(\xi)\right)^{-d/2}
\]
\end{lem}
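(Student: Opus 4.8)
The plan is to prove the inequality in the (equivalent) form
\[
        \int_{\PP^{d-1}}\sqrt{\ro(h,\xi)}\,\d\nu(\xi)\ \ge\ \|h\|^{-d/2},
\]
that is, $\|h\|\ge\bigl(\int_{\PP^{d-1}}\sqrt{\ro(h,\xi)}\,\d\nu(\xi)\bigr)^{-2/d}$ — this is how the displayed bound should be read, since $\int\sqrt{\ro(h,\cdot)}\,\d\nu\le 1$ (Cauchy--Schwarz, using $\int\ro(h,\cdot)\,\d\nu=\int\d(h^{-1}_{*}\nu)=1$) and the variant with the roles of $d/2$ and $2/d$ literally exchanged already fails for $h$ with a single dominant singular value once $d$ is large. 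The one ingredient is the classical description of the Radon--Nikod\'ym cocycle of the projective action: for $h\in\SL_d(\RR)$ and a unit vector $v\in\RR^d$, writing $\xi=[v]\in\PP^{d-1}$, one has $\ro(h,\xi)=|\det h|\cdot\|hv\|^{-d}=\|hv\|^{-d}$, the last equality because $\det h=1$. This is the Jacobian of the diffeomorphism $[w]\mapsto[hw]$ of $\PP^{d-1}$ relative to the rotation-invariant measure $\nu$: splitting $\RR^d=\RR v\oplus v^{\perp}$ and computing in adapted orthonormal bases, the Jacobian equals $|\det(P_{(hv)^{\perp}}\,h|_{v^{\perp}})|\cdot\|hv\|^{-(d-1)}=|\det h|\cdot\|hv\|^{-d}$; this computation is standard (see \emph{e.g.}~\cite[Ch.~3]{Zimmer:book:84}).

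Granting this, the lemma is immediate. For every unit vector $v$ we have $\|hv\|\le\|h\|_\mathrm{op}$, and moreover $\|h\|_\mathrm{op}\le\|h\|_\mathrm{op}\,\|h^{-1}\|_\mathrm{op}=\|h\|$ because $\|h^{-1}\|_\mathrm{op}\ge|\det h^{-1}|^{1/d}=1$. Hence
\[
        \sqrt{\ro(h,\xi)}\ =\ \|hv\|^{-d/2}\ \ge\ \|h\|^{-d/2}\qquad(\nu\text{-a.e. }\xi=[v]),
\]
and integrating this pointwise bound against the probability measure $\nu$ on $\PP^{d-1}$ gives $\int\sqrt{\ro(h,\xi)}\,\d\nu(\xi)\ge\|h\|^{-d/2}$, which rearranges to the claim.

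There is essentially no obstacle here: once the Jacobian formula is recorded, the proof is a one-line pointwise domination followed by integration over a probability space. The estimate is far from sharp — equality would force $\|hv\|$ to be independent of the unit vector $v$, i.e.\ $h\in\RR_{>0}\cdot\mathrm{O}_d(\RR)$, so $h$ orthogonal — but this is of no consequence: in Theorem~\ref{T:coc-Furstenberg} only the positivity of the resulting quantity $-\tfrac{2}{d}\log\|\tilde\pi(\sigma)\|_\mathrm{sp}$ is used. If one prefers to avoid the pointwise bound, Jensen's inequality for the convex function $t\mapsto t^{-d/2}$ together with $\int_{S^{d-1}}\|hv\|\,\d\nu(v)\le\|h\|_\mathrm{op}\le\|h\|$ yields the same estimate.
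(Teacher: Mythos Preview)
Your diagnosis is correct: the exponent in the displayed inequality is inverted. What both you and the paper's argument actually establish is
\[
\int_{\PP^{d-1}}\sqrt{\ro(h,\xi)}\,\d\nu(\xi)\ \ge\ \|h\|^{-d/2},
\qquad\text{equivalently}\qquad
\|h\|\ \ge\ \Big(\int\sqrt{\ro}\Big)^{-2/d},
\]
and the version with exponent $-d/2$ genuinely fails for $d\ge 3$, as your single-dominant-singular-value example shows. The subsequent chain of inequalities in the proof of Theorem~\ref{T:coc-Furstenberg} carries the same slip (the factor $d/2$ should be $2/d$), but, as you say, this is harmless for the theorem since only the strict positivity of $-\log\|\tilde\pi(\sigma)\|_\mathrm{sp}$ is used.

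The two arguments differ in route. The paper works through $\ro_{\max}(h)=\max_{\xi}\ro(h,\xi)$: from $1=\int\ro\le\sqrt{\ro_{\max}}\int\sqrt{\ro}$ one gets $\int\sqrt{\ro}\ge\ro_{\max}^{-1/2}$, and then $\ro_{\max}$ is computed on a diagonal matrix via the Cartan decomposition. (Incidentally, with the convention $\ro(h,\xi)=\d(h^{-1}_*\nu)/\d\nu(\xi)$ the maximum sits at $\RR e_{d}$ with value $a_d^{-d}$ rather than at $\RR e_{1}$ with value $a_{1}^{d}$; either quantity is $\le\|h\|^{d}$ for the symmetric norm $\|h\|=\|h\|_{\mathrm{op}}\|h^{-1}\|_{\mathrm{op}}$, so the conclusion survives.) Your route is more direct: you invoke the explicit Jacobian $\ro(h,[v])=\|hv\|^{-d}$ on all of $\PP^{d-1}$, bound it pointwise by $\|h\|_{\mathrm{op}}^{-d}\ge\|h\|^{-d}$, and integrate over the probability space. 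This avoids the polar decomposition and the bookkeeping about where the maximum is achieved, at the modest cost of quoting the global Jacobian formula rather than computing it only at the extremal direction. Both approaches yield exactly the same (corrected) inequality; yours is the cleaner of the two.
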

\begin{proof}
Let  $\ro_{max}(h)=\max_{\xi\in\PP^{d-1}}\ro(h,\xi)$.
Since $\int_{\PP^{d-1}}\ro(h,\xi)\d\nu(\xi)=1$, we have
\[
         1\le   \sqrt{\ro_{max}(h)}\cdot \int_{\PP^{d-1}} \sqrt{\ro(h,\xi)}\d\nu(\xi).
\]
It now suffices to show that $\ro_{max}(h)=\|h\|^{d}$.
Using the Cartan (polar) decomposition and since $\nu$ is $\SO_d(\RR)$-invariant,
it is enough to consider for $h$ a diagonal matrix $h={\rm diag}[a_{1},\dots,a_{d}]$
with $a_{1}\ge a_{2}\ge\dots\ge a_{d}>0$ and $a_{1}\cdots a_{d}=1$, where
\[
        \ro_{max}(h)=\ro(h,\RR e_{1})
        =\frac{a_{1}}{a_{2}}\times\cdots\times \frac{a_{1}}{a_{d}}=a_{1}^{d}.
\] 
On the other hand, $\|h\|=a_{1}$.
\end{proof}

Using the above estimate we deduce:
\begin{eqnarray*}
        \int_{G}\int_{X} &&\log\|\alpha(g,x)\|\d\mu(x)\d\sigma(g)\\
        &&\ge  \int_{G}\int_{X} - \frac{d}{2}\cdot \log
                \left(\int_{\PP^{d-1}}  \sqrt{\ro(\alpha(g,x),\xi)}\d\nu(\xi)\right)
                \d\mu(x)\d\sigma(g)\\
        &&\ge - \frac{d}{2}\cdot \log\left(
                \int_{G}\int_{X\times\PP^{d-1}}\sqrt{\ro(\alpha(g,x),\xi)}\,
                d\sigma(g)\d(\mu\times\nu)(x,\xi)\right)\\
        &&=  -\frac{d}{2}\cdot\log\ip{\tilde\pi(\sigma){\bf 1}}{{\bf 1}}
        \ge -\frac{d}{2}\cdot \log\|\tilde\pi(\sigma)\|.
\end{eqnarray*}
Replacing $\sigma$ by $\sigma^{*n}$, where $\sigma$ is generating, we get
\begin{multline*}
        \liminf_{n\to\infty}\ \frac{1}{n}\int_{G}\int_{X}\log\|\alpha(g,x)\|\d\mu(x)\d\sigma^{*n}(g)\\
        \ge \lim_{n\to\infty} \ -\frac{d}{2}\cdot \log\|\tilde\pi(\sigma^{*n})\|^{1/n}
         = - \frac{d}{2}\cdot\log\|\tilde\pi(\sigma)\|_\mathrm{sp}.
\end{multline*}
This last term is strictly positive by~\eqref{e:SG:pi}, concluding the proof of Theorem~\ref{T:coc-Furstenberg}.
\end{proof}

We shall now recall Oseledets' multiplicative ergodic theorem
and establish some additional information that becomes
available when $\lambda_1>0$ thanks to the above theorem.

\smallskip

Let $\sigma$ be a probability measure as in Lemma~\ref{L:goodmeas}. 
Consider the one-sided Bernoulli shift $\teta$ acting on 
$\Omega=G^{\NN}$ equipped with the product measure $\sigma^{\NN}$ by $(\teta \omega)_{i}=\omega_{i+1}$.
Using the $G$-action on $X$, one defines a transformation 
$T$ on $Z=\Omega\times X$, preserving the measure $\sigma^{\NN}\times \mu$, by 
\[
        T(\omega,x)=(\teta\omega, \omega_{1} x).
\]
In fact, by Kakutani's random ergodic theorem~\cite{Kakutani51}, the assumption that $\sigma$ has full support 
on $G$ together with ergodicity of the $G$-action on $X$
implies that $T$ is ergodic.
The cocycle $\alpha:G\times X\to\SL_{d}(\RR)$ gives rise to a 
function $A:Z\to \SL_{d}(\RR)$
\[
        A(\omega,x)=\alpha(\omega_{1},x)
\] 
for which $\log \|A(-)\|$ is in $L^{1}(Z,\sigma^{\NN}\times \mu)$ 
by~\eqref{e:mu-moment}.
The associated $\NN$-cocycle $\NN\times Z\to \SL_{d}(\RR)$ takes the following form
\[
        A_{n}(\omega,x)=(A\circ T^{n-1})\cdots (A\circ T) A (\omega,x)
        =\alpha(\omega_{n}\cdots \omega_{1}, x).
\]
To such a function one associates the non-negative quantity
\begin{align*}
        \lambda_{1}= \lambda_{1}(\alpha, \sigma) &=
                \liminf_{n\to\infty}  \int_{X}  \frac{1}{n}\log\|A_{n}(z)\|\d z\\
                &=\liminf_{n\to\infty}  \int_{X} \int_{G} \frac{1}{n}\log\|\alpha(g,x)\|\d\mu(x)\d\sigma^{*n}(g).
\end{align*}
It follows from Kingman's subadditive ergodic theorem that the above $\liminf$ 
is actually a limit (converging to the infimum); moreover,
the convergence to the constant function $\lambda_{1}$
holds not only for the integral, but also almost-everywhere and in $L^{1}$.
 
\medskip 
 
When $\lambda_{1}>0$, Oseledets' theorem (\cite{Oseledets}; see also~\cite{Raghunathan79,Ruelle79}) gives
further structure, namely there exist:
\begin{itemize}
\item
        An integer $1<k\le d$, integers $d=d_{1}>\dots>d_{k}>d_{k+1}=0$ and reals
        $\lambda_{1}>\dots>\lambda_{k}$;
\item
        A measurable family $\{ E_{\omega, j}(x) \}$ of $(d_{1}, d_{2},\dots,d_{k})$-flags
        \[
                \RR^{d}=E_{\omega,1}(x)\supset E_{\omega,2}(x)\supset\dots\supset E_{\omega,k}(x)
        \]
        with $\dim E_{\omega,j}=d_{j}$ and such that for a.e. $(\omega,x)\in \Omega\times X$  
        \[
        E_{\omega,j}(x)=\left\{ v\in \RR^{d} \ :\ 
                \limsup_{n\to\infty} \frac{1}{n}\log\|\alpha(\omega_{n}\cdots\omega_{1},x)v\|
                \le \lambda_{j}\right\}.
        \]
\end{itemize}

\begin{prop}
\label{P:inv-of-Lyap}
Let $G=G_1\times G'_1$ be a lcsc group, $(X,\mu)$ a probability $G$-space
on which $G_1$ is ergodic and $\alpha:G\times X\to\SL_{d}(\RR)$ a cocycle
satisfying~$(L^1)$.

Then $G'_1$ leaves invariant the characteristic filtrations associated
to random walks on $G_1$. More precisely, if $\sigma$ is a probability measure
on $G_1$ satisfying~\eqref{e:mu-moment} and   
$\lambda_{1}(\alpha|_{G_1}, \sigma)>0$, then
\[
        \alpha(h,x) E_{\omega,j}(x)= E_{\omega,j}(h x)\qquad (\forall\,h\in G'_1, \text{a.e. }x\in X, \omega\in\Omega).
\]
\end{prop}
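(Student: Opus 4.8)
The plan is to exploit the commutation between $G_1$ and $G_1'$ and the canonical, intrinsic characterization of the Oseledets flags. First I would observe that the flags $E_{\omega,j}(x)$ are defined purely in terms of the growth of $\alpha(\omega_n\cdots\omega_1,x)v$ along random trajectories $\omega\in\Omega=G_1^{\NN}$; this characterization involves only the restriction $\alpha|_{G_1}$ and the measure $\sigma$ on $G_1$. The key algebraic input is the cocycle identity together with the fact that $G_1$ and $G_1'$ commute inside $G$: for $h\in G_1'$ and $g\in G_1$ one has $\alpha(g,hx)\alpha(h,x)=\alpha(hg,x)=\alpha(gh,x)=\alpha(h,gx)\alpha(g,x)$. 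Iterating gives, for any word $w=\omega_n\cdots\omega_1$ in $G_1$,
\[
        \alpha(h, w x)\,\alpha(w,x)\ =\ \alpha(w, h x)\,\alpha(h,x),
\]
so that $\alpha(w,hx)=\alpha(h,wx)\,\alpha(w,x)\,\alpha(h,x)^{-1}$.

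Second I would plug this into the growth condition. Fix $h\in G_1'$; for a.e.\ $x$ the matrix $\alpha(h,x)$ and, along a.e.\ trajectory, $\alpha(h,wx)$ are fixed invertible matrices (independent of $n$), so conjugating by them does not affect the exponential growth rate: $\tfrac1n\log\|\alpha(w,hx)v\|$ and $\tfrac1n\log\|\alpha(w,x)\alpha(h,x)^{-1}v\|$ have the same $\limsup$. Hence
\[
        v\in E_{\omega,j}(hx)\iff \alpha(h,x)^{-1}v\in E_{\omega,j}(x),
\]
which is exactly $\alpha(h,x)E_{\omega,j}(x)=E_{\omega,j}(hx)$. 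The only subtlety is the interchange of quantifiers: the Oseledets decomposition holds for a.e.\ $(\omega,x)$, and I must ensure that for a.e.\ $x$ the conclusion holds simultaneously for a.e.\ $\omega$ both at $x$ and at $hx$. This is handled by Fubini together with the fact that $h_*\mu=\mu$ (so the pushforward of a conull set is conull) and that the exceptional $\omega$-sets are $\sigma^\NN$-null for each of the countably many rational approximants needed; a standard measurability argument makes the family $\{E_{\omega,j}(x)\}$ jointly measurable in $(\omega,x)$, so these manipulations are legitimate.

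The main obstacle, such as it is, is bookkeeping rather than mathematics: one must be careful that $\lambda_1(\alpha|_{G_1},\sigma)>0$ is genuinely used (it is what guarantees Oseledets produces a nontrivial flag with the stated variational description, via Theorem~\ref{T:coc-Furstenberg} when the hypotheses there are met, or simply as an assumption here), and that the null sets are chosen uniformly in $h$ over a countable dense subgroup of $G_1'$ before invoking continuity of $x\mapsto\alpha(h,x)$ in measure to pass to all $h\in G_1'$. Since $\ell$ is bounded on compact sets (Lemma~\ref{L:bdd}) and $\alpha$ is an honest measurable cocycle, $h\mapsto\alpha(h,-)$ varies measurably, and the invariance relation, being closed, passes to the closure. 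This completes the argument.
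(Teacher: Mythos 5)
There is a genuine gap, and it sits exactly at the analytic heart of the proposition. Your algebraic identity is the same as the paper's: writing $g_n=\omega_n\cdots\omega_1$ and using that $G_1$ and $G'_1$ commute, one gets $\alpha(g_n,hx)v=\alpha(h,g_nx)\,\alpha(g_n,x)\,\alpha(h,x)^{-1}v$. But your next assertion — that "along a.e.\ trajectory, $\alpha(h,wx)$ is a fixed invertible matrix (independent of $n$), so conjugating by it does not affect the exponential growth rate" — is false: the point $g_nx$ moves along the random orbit, so $\alpha(h,g_nx)$ changes with $n$, and a priori $\log\|\alpha(h,g_nx)\|$ could grow linearly in $n$, which would destroy the comparison of the exponents at $x$ and at $hx$. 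Conjugation by $\alpha(h,x)$ alone is harmless, but the factor $\alpha(h,g_nx)$ is precisely what must be controlled, and your argument never addresses it.

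This is where the $(L^1)$ hypothesis enters, which your proof never actually uses. The paper's proof sets $f(-)=\log\|\alpha(h^{-1},-)\|\in L^1(X,\mu)$, obtains the pointwise estimate
\[
\Big|\log\|\alpha(g_n,x)\,\alpha(h,x)^{-1}v\|-\log\|\alpha(g_n,hx)v\|\Big|\ \le\ f(g_n hx),
\]
and then views $f$ as an $L^1$-function on the skew product $Z=\Omega\times X$ with the transformation $T(\omega,x)=(\teta\omega,\omega_1x)$, so that $f(g_nhx)$ is $f$ evaluated along the $T$-orbit. The standard consequence of Birkhoff's theorem, $\tfrac1n\,\fhi(T^nz)\to0$ a.e.\ for $\fhi\in L^1$, then shows the error term is $o(n)$ almost surely, which is exactly what makes the two $\limsup$'s agree and yields $\alpha(h,x)E_{\omega,j}(x)=E_{\omega,j}(hx)$. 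Your measure-theoretic bookkeeping (Fubini, $h_*\mu=\mu$, passing to a countable dense set of $h$) is either fine or unnecessary — the statement is proved for each fixed $h$ with an $h$-dependent null set — but without the Birkhoff/$(L^1)$ step the proof does not go through.
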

\begin{proof}
Fix some $h\in G'_1$. Given $g\in G_1$, $x\in X$ and $v\in \RR^{d}$,
we write $y=h x$ and $w=\alpha(h,x)^{-1}v$. Since $g$ and $h$ commute, we have
\begin{align*}
        \alpha(g,x) w&=\alpha(h^{-1}g\, h, x)w\\
        &=\alpha(h^{-1}g, h x)\,\alpha(h,x) w=\alpha(h^{-1}g, y) v\\
        &=\alpha(h^{-1}, g y)\,\alpha(g, y) v
\end{align*}
which gives the estimate
\[
        \Big| \log\|\alpha(g,x) w\|-\log\|\alpha(g,y) v\|\Big|\le f(g y)
\]
with $f(-)=\log\|\alpha(h^{-1},-)\| \in L^{1}(X)$.
Recall that for any $L^{1}$-function $\fhi$ on an ergodic
system $(Z,T)$, Birkhoff's ergodic theorem implies
\[
        \frac{1}{n}\fhi(T^{n}z)=\frac{1}{n}\fhi(z)+\frac{1}{n}\sum_{k=0}^{n-1}
        \left(\fhi\circ T-\fhi\right)(z)\ \longrightarrow 0
\]
for a.e. $z$. Viewing $f$ as an $L^{1}$-function on $Z=\Omega\times X$, we deduce
\[
        \left| \frac{1}{n}\log \|\alpha(g_{n},x) w\|-\frac{1}{n}\log \|\alpha(g_{n},h x)v\|\right|
        \le \frac{1}{n} f(g_{n} y)\longrightarrow 0
\]
for a.e. $x\in X$ and a.e. $\omega\in\Omega$. This shows that $v\in E_{\omega,j}(h x)$ is
equivalent to $w\in E_{\omega,j}(x)$, finishing the proof.
\end{proof}

\section{The Cocycle Superrigidity Theorems}
\subsection{Semi-Simple Hulls}\label{S:s-s:hull}
We will construct the homomorphism $\ro: G\to H$ as a product of homomorphisms
$\ro_i: G_i\to H_i$, where $H=H_1\times\cdots\times H_n$ will be a splitting of the given
connected centre-free group $H$ into (possibly trivial) connected semi-simple Lie groups.

We first consider the case of $H$ \emph{simple} (thus the above splitting will have only one
non-trivial factor). We recall that it was assumed in 
Theorem~\ref{T:cocycle1} that $H$ has no compact factors; in particular, $H$ 
is non-compact (compare with Section~\ref{S:compact}).  

\begin{proof}[Proof of Theorem~\ref{T:cocycle1} for $H$ simple]
The assertion to prove is that there exists a single factor $G_{i_{1}}$ and
a representation $\ro_{i_{1}}:G_{i_{1}}\to H$ with Zariski-dense image
such that
\[
        \alpha(g,x)=f(g x)^{-1}\,\ro_{i_{1}}\,(g_{i_{1}})\,f(x)
\]
for some $f:X\to H$.

\medskip

Write $G=G_{1}\times G_{1}'$ where $G_{1}'=G_{2}\times\cdots\times G_{n}$, and 
let $H_{1}$ and $H_{1}'$ be the algebraic hulls in $H$ of the restrictions
$\alpha|_{G_1}$ and $\alpha|_{G_{1}'}$, respectively.
Thus there are measurable maps $f,f':X\to H$ such that
$\alpha^{f}$ ranges in $H_{1}$ on $G_{1}\times X$
and $\alpha^{f'}$ ranges in $H_{1}'$ on $G_{1}'\times X$.

Since $H_{1}<H$ is an inclusion of algebraic groups,
the action $H_{1}\acts H/H_{1}$ is tame by Theorem~\ref{thm:tame:alg}.
Hence, applying Proposition~\ref{P:CRL}, we deduce that the cocycle 
$\alpha^f$ ranges in $N_{H}(H_{1})$. Since $\alpha$ was assumed to be Zariski-dense in $H$,
the same holds for $\alpha^f$. But $N_{H}(H_{1})$ being an algebraic subgroup of $H$, 
it follows that $N_{H}(H_{1})=H$. As $H$ is simple, we have either $H_{1}=H$ or $H_{1}=\{e\}$.

\smallskip

\emph{Case $H_{1}=\{e\}$}. It follows from Proposition~\ref{P:CRL} that
the restricted cocycle $\alpha^{f}|_{G'_1}$ is a homomorphism $\ro':G_{1}'\to H$.
Note that the image $\ro'(G_{1}')$ is Zariski-dense in $H$.
Observe that for each $2\le i\le n$ the image $\ro'(G_{i})$ is normalised by
$\ro'(G_{1}')$, and therefore by all of $H$, hence each $\ro'(G_{i})$ is
either trivial, or is Zariski-dense in $H$. 
However, for all but one $2\le i\le n$ the image $\ro'(G_{i})$ is trivial,
for otherwise the simple group $H$ would contain two 
commuting Zariski-dense subgroups $\ro'(G_{i})$ and $\ro'(G_{j})$, which is impossible.
Hence $\ro'$ factors through a Zariski-dense homomorphism $\ro_{i_{1}}:G_{i_{1}}\to H$
of a single factor.
 
\smallskip

\emph{We can thus assume $H_{1}=H$ for the remainder of this proof}.
Applying Proposition~\ref{P:CRL} to $\alpha^{f'}|_{G'_1}$,
we deduce that $H_{1}'\normal H$ and therefore either $H_{1}'=\{e\}$ or $H_{1}'=H$.
If $H_{1}'=\{e\}$, then Proposition~\ref{P:CRL} shows that the cocycle 
$\alpha^{f'}$, being trivial on $G_{1}'\times X$, is a homomorphism $\ro_{1}:G_{1}\to H$
when restricted to $G_{1}\times X$. Therefore,
the main point is to prove the following key proposition, for which we shall also indicate an
alternative approach in Section~\ref{S:splitting}.
\begin{prop}
\label{P:H1H2H}
One cannot have $H_{1}'=H$.
\end{prop}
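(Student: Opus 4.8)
The plan is to contradict $H_1'=H$ by producing a \emph{proper} algebraic reduction of $\alpha|_{G_1'}$. The mechanism is the following: Zariski-density of $\alpha|_{G_1}$ in the simple non-compact group $H$ will force a genuinely non-trivial Lyapunov flag for random walks along $G_1$, and since $G_1$ and $G_1'$ commute, the factor $G_1'$ must preserve that flag; a proper invariant flag is exactly a reduction to a proper algebraic subgroup, whence the contradiction with $H_1'=H$. To set this up, fix the adjoint representation $\Ad\colon H\hookrightarrow\SL_N(\RR)$, $N=\dim H$: it is faithful because $H$ is centre-free, irreducible because $H$ is simple, and $\Ad(H)\se\SL_N(\RR)$ because $H$ has no non-trivial characters. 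Put $\beta=\Ad\circ\alpha\colon G\times X\to\SL_N(\RR)$. Since $\log\|\Ad(h)\|\le c_1\log\|h\|+c_2$ for suitable constants, $\beta$ again satisfies $(L^1)$; and by Lemma~\ref{L:hull} the algebraic hull of $\beta|_{G_1}$ in $\SL_N$ is $\Ad$ of the hull of $\alpha|_{G_1}$ in $H$, hence equals $\Ad(H)$ since $H_1=H$.

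Next I would check that $\beta|_{G_1}$ satisfies the hypotheses of Theorem~\ref{T:coc-Furstenberg}. Its algebraic hull $\Ad(H)$ is non-compact, so $\beta|_{G_1}$ is not cohomologous to a cocycle ranging in a compact subgroup of $\SL_N$; and $\Ad(H)$ acts irreducibly on $\RR^N$, a property passed to any finite-index subgroup of a conjugate of it (the group being connected, to the conjugate itself), so $\beta|_{G_1}$ is not cohomologous to a cocycle ranging in a virtually reducible subgroup either. As $G_1\acts X$ is ergodic with spectral gap by (SG), Theorem~\ref{T:coc-Furstenberg} applies: choosing a symmetric absolutely continuous generating measure $\sigma$ on $G_1$ with the moment property of Lemma~\ref{L:goodmeas}, we obtain $\lambda_1(\beta|_{G_1},\sigma)>0$.

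Now apply Oseledets' theorem to the skew system attached to $(\beta|_{G_1},\sigma)$. Because $\lambda_1>0$ there are at least two distinct exponents, so the second piece $E_{\omega,2}(x)$ of the characteristic flag has dimension $d_2$ with $0<d_2<N$: a proper non-zero subspace of $\RR^N$ depending measurably on $(\omega,x)$. By Proposition~\ref{P:inv-of-Lyap}, applied to $G=G_1\times G_1'$ with ergodic factor $G_1$, the factor $G_1'$ preserves this flag: $\beta(h,x)E_{\omega,2}(x)=E_{\omega,2}(hx)$ for all $h\in G_1'$ and a.e. $(\omega,x)$. Fix a generic $\omega$, so $\phi=E_{\omega,2}(\cdot)\colon X\to\mathrm{Gr}(d_2,N)$ is measurable with $\phi(hx)=\beta(h,x)\phi(x)$ for all $h\in G_1'$. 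The $\Ad(H)$-orbit of $\phi(x)$ is $G_1'$-invariant, hence a single orbit $\Ad(H)\cdot v_0$ a.e. by ergodicity of $G_1'$ (which contains the ergodic factor $G_2$); since the $\Ad(H)$-action on $\mathrm{Gr}(d_2,N)=\SL_N/P$ is tame by Theorem~\ref{thm:tame:alg}(i), the selection argument used in the proof of Lemma~\ref{L:hull} gives a measurable $f\colon X\to\Ad(H)\cong H$ with $\phi(x)=f(x)v_0$. Then $(\alpha|_{G_1'})^{f}$ ranges in $\mathrm{Stab}_H(v_0)$, a \emph{proper} algebraic subgroup of $H$ since $v_0$ is a proper non-zero subspace and $\Ad$ is irreducible; hence the algebraic hull $H_1'$ of $\alpha|_{G_1'}$ is conjugate into a proper algebraic subgroup, contradicting $H_1'=H$.

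The substantive input sits entirely in the two results proved just before, namely positivity of the top exponent (Theorem~\ref{T:coc-Furstenberg}) and the $G_1'$-invariance of characteristic flags (Proposition~\ref{P:inv-of-Lyap}); granting these, the present proof is a short assembly via tameness and ergodicity. I expect the point most deserving of care to be the reduction of the first paragraph together with its use in the second: one must make sure that Zariski-density of $\alpha|_{G_1}$ in the simple non-compact $H$, transported through a faithful \emph{irreducible} representation, genuinely excludes both the compact and the virtually reducible alternatives of Theorem~\ref{T:coc-Furstenberg}, for it is precisely this that makes the Oseledets flag proper and non-zero rather than trivial.
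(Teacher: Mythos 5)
Your proof is correct and takes essentially the same route as the paper: choose a faithful irreducible representation (the paper's arbitrary $\pi$, your $\Ad$), use Lemma~\ref{L:hull} and the hull $H_{1}=H$ to rule out the compact and virtually reducible alternatives, invoke Theorem~\ref{T:coc-Furstenberg} and Oseledets to get a proper characteristic flag, and use Proposition~\ref{P:inv-of-Lyap} to make it $G_{1}'$-equivariant, yielding a proper reduction of $\alpha|_{G_{1}'}$. The only cosmetic difference is the endgame: the paper conjugates the flag map by a section into $\SO_{r}(\RR)$ and contradicts Lemma~\ref{L:noncpt+strirr} for $\beta|_{G_{1}'}$ (which encodes $H_{1}'=H$), whereas you conjugate inside $\Ad(H)$ via tameness, ergodicity of $G_{1}'$ and measurable selection so as to contradict $H_{1}'=H$ directly --- the same content.
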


Towards a contradiction, let us assume $H_{1}'=H$.
Choose an irreducible faithful representation $\pi:H\to\SL_{r}(\RR)$ and consider
the resulting linear cocycle
\[
        \beta=\pi\circ\alpha:G\times X\xrightarrow{\,\alpha\,}H\xrightarrow{\,\pi\,}\SL_{r}(\RR).
\]

\begin{lem}
\label{L:noncpt+strirr}
The restrictions $\beta|_{G_1}$ and $\beta|_{G'_1}$ are both not
cohomologous to cocycles ranging in a compact subgroup, nor in a subgroup leaving
invariant a finite family of proper subspaces in $\RR^{r}$.
\end{lem}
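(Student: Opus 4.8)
The plan is to pin down the algebraic hull of each of the two restrictions $\beta|_{G_1}$ and $\beta|_{G'_1}$ and then read both assertions off its structure. Since $H$ is a connected centre-free semi-simple Lie group and $\pi$ is faithful and finite-dimensional, $\pi$ is a morphism of real algebraic groups which is an isomorphism onto the connected algebraic subgroup $\pi(H)<\SL_r(\RR)$. I first observe that $\beta|_{G_1}$, which ranges in $\pi(H)$, is Zariski-dense there: if $\beta|_{G_1}$ conjugated by some measurable $f:X\to\pi(H)$ ranged in a proper algebraic subgroup $\bar M<\pi(H)$, then $\alpha|_{G_1}$ conjugated by $\pi^{-1}\circ f$ would range in the proper algebraic subgroup $\pi^{-1}(\bar M)<H$, contradicting the standing hypothesis $H_1=H$ (that $\alpha|_{G_1}$ is Zariski-dense in $H$). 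Applying Lemma~\ref{L:hull} to $\beta|_{G_1}$ as a cocycle valued in the algebraic subgroup $\pi(H)<\SL_r(\RR)$ --- legitimate because $G_1\acts X$ is ergodic by~(SG) --- then shows that the algebraic hull of $\beta|_{G_1}$ inside $\SL_r(\RR)$ is conjugate to $\pi(H)$. The identical reasoning applies to $\beta|_{G'_1}$: the group $G'_1=G_2\times\cdots\times G_n$ still acts ergodically on $X$ (because $G_2$ does), and by the standing assumption $H'_1=H$ as well.

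Next I exploit the structure of $\pi(H)$. Being an isomorphic copy of the non-compact semi-simple group $H$, the group $\pi(H)$ is not contained in any compact subgroup of $\SL_r(\RR)$ (for instance, $H$ non-compact supplies a non-trivial unipotent element, whose image under the faithful algebraic $\pi$ is again a non-trivial unipotent); and since $\pi$ is irreducible and $\pi(H)$ is connected, $\pi(H)$ leaves invariant no proper non-zero subspace of $\RR^r$, and \emph{a fortiori} permutes no finite family of such subspaces (a connected group maps trivially into any finite symmetric group). Now suppose $\beta|_{G_1}$ were cohomologous to a cocycle ranging in a compact subgroup $C<\SL_r(\RR)$. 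Averaging an inner product shows $C$ lies in a compact algebraic subgroup; since cohomologous cocycles over an ergodic action have conjugate algebraic hulls, the hull $\pi(H)$ of $\beta|_{G_1}$ would be conjugate into that compact algebraic subgroup, contradicting that $\pi(H)$ is contained in no compact subgroup. Likewise, if $\beta|_{G_1}$ were cohomologous to a cocycle ranging in a subgroup leaving invariant a finite family $\mathcal V$ of proper non-zero subspaces, then --- the stabiliser $\mathrm{Stab}_{\SL_r(\RR)}(\mathcal V)$ being a real algebraic group --- the hull of $\beta|_{G_1}$ would be conjugate into $\mathrm{Stab}_{\SL_r(\RR)}(\mathcal V)$; a connected subgroup of $\mathrm{Stab}_{\SL_r(\RR)}(\mathcal V)$ fixes each member of $\mathcal V$, so a conjugate of $\pi(H)$ would preserve a proper non-zero subspace of $\RR^r$, contradicting irreducibility of $\pi$. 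The same two arguments, applied to $\beta|_{G'_1}$ whose hull is also conjugate to $\pi(H)$, complete the proof.

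I do not anticipate a genuine obstacle: once the hull of each restriction has been identified, the statement is bookkeeping. The one point requiring care is precisely that identification --- transferring Zariski-density from $\alpha|_{G_i}$ in $H$ to $\beta|_{G_i}$ in $\pi(H)$ --- which is where faithfulness of $\pi$ (hence its being an algebraic isomorphism onto the algebraic subgroup $\pi(H)$) enters, in combination with Lemma~\ref{L:hull}. One should also check that the ergodicity hypotheses needed for Lemma~\ref{L:hull} and for the invariance of the algebraic hull under cohomology hold for both factors; they do, since each $G_i$, and therefore each $G'_1=G_2\times\cdots\times G_n$, acts ergodically on $X$ by~(SG).
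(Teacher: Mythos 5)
Your proof is correct and follows essentially the same route as the paper: identify the algebraic hull of each restriction as (a conjugate of) $\pi(H)$ via Lemma~\ref{L:hull}, note that compact subgroups and stabilisers of finite families of subspaces are contained in real algebraic subgroups, and conclude from $\pi(H)$ being connected, irreducible and non-compact. The only difference is that you spell out steps the paper leaves implicit (the transfer of Zariski-density through the faithful algebraic $\pi$ and the ergodicity of $G'_1$), which is fine.
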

\begin{proof}[Prof of the lemma]
Recall that compact groups, or groups leaving invariant a finite union of 
subspaces in $\RR^{r}$, are both examples of $\RR$-algebraic groups in $\SL_{r}(\RR)$.
Lemma~\ref{L:hull} implies that the algebraic hull of $\beta|_{G_1}$ is $\pi(H)$.
Had $\beta|_{G_1}$ been cohomologous to a cocycle ranging in an algebraic
subgroup $L<\SL_{r}(\RR)$, it would follow that $\pi(H)<L$ 
up to replacing $L$ by its conjugate.
However $\pi(H)$ is not a subgroup of a compact group
and does not leave invariant a finite union of proper subspace,
because $\pi(H)$ is connected, irreducible and non-compact.

The same argument can be applied to $\beta|_{G'_1}$ since we are assuming $H_{1}'=H$.
\end{proof} 

The pointwise integrability condition ($L^{1}$) on $\alpha$ is inherited by $\beta$,
and passed on to $\beta|_{G_1}$. Choose a probability measure $\sigma$ on $G_{1}$
as in Lemma~\ref{L:goodmeas} for $\beta|_{G_1}$. That is, $\sigma$ is absolutely continuous,
symmetric and satisfies the average integrability condition
\[
        \int_{G_{1}}\int_{X}\log\|\beta|_{G_1}(g,x)\|\d\mu(x)\d\sigma(g)\ <\infty.
\]
One may now look at the behavior of $\beta|_{G_1}$ along a path of a $\sigma$-random walk
as is described in Section~\ref{S:RW}.
In particular, Lemma~\ref{L:noncpt+strirr} allows us to apply Theorem~\ref{T:coc-Furstenberg} 
which ensures strict positivity of the top Lyapunov exponent
\[
        \lambda(\beta|_{G_1},\sigma)=\lim_{n\to\infty} \frac{1}{n}\int_{G_{1}}\int_{X}
        \log\|\beta|_{G_1}(g,x)\|\d\mu(x)\d\sigma^{*n}(g)\ >0.
\]
This fact yields the non-triviality of
the Lyapunov filtrations (see Section~\ref{S:RW}). 
In particular, for some proper intermediate dimension $0<d_2<r$  there 
is a measurable family $\{E_{\omega,2}(x)\}_{\omega\in\Omega}$  
of $d_2$-dimensional vector subspaces $E_{\omega,2}(x)<\RR^{r}$ 
which are intrinsically defined by $\beta|_{G_1}:G_{1}\times X\to\SL_{r}(\RR)$
(describing the exceptional space of ``slow'' vectors under the $\omega$-path
of the random walk).

Being naturally associated to $G_{1}$-action each of these sections 
$E_{\omega,2}:X\to {\rm Gr}(r,d_2)$ is invariant under the action of the commuting
group $G_{1}'$. Namely by Proposition~\ref{P:inv-of-Lyap},
for almost every $\omega\in\Omega$ the section $E_{\omega,2}$ satisfies
\[ 
        \beta|_{G'_1}(g,x) E_{\omega,2}(x)=E_{\omega,2}(g x)\qquad(g\in G_{1}').
\]
Fix a $d_2$-dimensional subspace $E_{0}<\RR^{r}$, and choose a 
measurable map $\fhi:X\to \SO_{r}(\RR)$ such that $E_{\omega,2}(x)=\fhi(x)E_{0}$.
Then the $\fhi$-conjugate of $\beta|_{G'_1}:G_{1}'\times X\to \SL_{r}(\RR)$ 
ranges in the group $\{g\in\SL_{d}(\RR): g E_{0}=E_{0}\}$,
contradicting Lemma~\ref{L:noncpt+strirr}.
This contradiction proves Proposition~\ref{P:H1H2H} and therefore
completes the proof of Theorem~\ref{T:cocycle1} in the case of $H$ being simple.
\end{proof}

\begin{proof}[Completion of the proof of Theorem~\ref{T:cocycle1} in general]
Now consider a general connected centre-free semi-simple Lie group $H$ without compact factors.
Then $H$ can be written as a direct product of simple factors $H=S_{1}\times\cdots\times S_{N}$.
(Indeed, the connectedness implies that $H$ is the product of its simple factors, and the triviality of the centre
implies that the latter product is direct.)
Let $\pi_{t}:H\to S_{t}$ denote the projections and apply the case of a simple target to
the cocycles 
\[
        \pi_{t}\circ \alpha:G\times X\lra H\lra S_{t}\qquad(t=1,\dots,N).
\]
We get homomorphisms $\tau_{t}:G_{i(t)}\to S_{t}$, where $i(t)\in\{1,\dots,n\}$,
and corresponding conjugating maps $f_{t}:X\to S_{t}$. For $j=1, \ldots, n$ we define
$$H_j= \prod_{i(t)=j} S_{t}, \qquad \ro_{j}= \prod_{i(t)=j} \tau_{t}:\ G_j\to H_j$$
being understood that the product over an empty set is the trivial group or morphism.
We have an identification $H=H_1\times \cdots\times H_n$.
Arranging the maps $f_{t}$ accordingly into a single map $f:X\to H$, we have
\[
        \alpha(g,x)=f(g x)^{-1}\, (\ro_{1}(g_1),\ldots,\ro_{n}(g_n))\,f(x).
\]
Note that $\ro_{j}(G_j)$ is Zariski-dense in $H_{j}$ because 
$\alpha$ is Zariski-dense in $H$; define $\ro=\prod_i \ro_i$. This concludes the proof.
\end{proof}

\subsection{Unrestricted Hulls}\label{S:cocycle2}
We turn now to the proof of Theorem~\ref{T:cocycle2}, where the cocycle
is no longer assumed to be Zariski-dense in a connected semi-simple Lie group.
There is a number of issues to address before it is possible to reduce the argument
to the proof of the semi-simple case, notably:

\smallskip

--- Controlled conjugation into the algebraic hull;

--- Non-connectedness of the algebraic hull;

--- The amenable radical.

\medskip 

\begin{proof}[Proof of Theorem~\ref{T:cocycle2}]
Keep the notation of the theorem. Let
\[
        \beta(g,x)=f(g x)^{-1}\,\alpha(g,x)\,f(x)
\]
be a conjugate of $\alpha$ that ranges in $L$. Conditions (SG) and
($L^{\infty}$) allow us to apply Proposition~\ref{P:taming} in order to choose 
$f$ such that $\|f\|^\delta$ is integrable for some $\delta>0$.
Recall that $L^{0}\normal L$ is the connected component of the identity.
Consider the finite extension $Y=X\ltimes (L/L^0)$ of the $G$-action on $X$,
which is the skew product by $\beta$ endowed with the invariant measure $\nu=\mu\times m_{L/L^0}$.
It is shown in~\cite[9.2.6]{Zimmer:book:84} that this action is ergodic
(otherwise, one could conjugate $\beta$ to a cocycle ranging in an intermediate  
subgroup $L^{0}<L_{1}<L$, which is incompatible with Zariski-density).

The point of the finite extension $\pi:Y\to X$ is that
the lift of the cocycle $\beta$ to 
$G\times Y\to L$ becomes cohomologous to a cocycle ranging in $L^{0}$.
Indeed, if $\xi:L/L^{0}\to L$ is a choice of coset representatives and
$\fhi(x,\ell L^{0})=\xi(\ell L^{0})$, then the cocycle $G\times Y\to L$
defined by $\fhi(g y)^{-1}\,\beta(g,\pi(y))\,\fhi(y)$ ranges in $L^{0}$
(in fact, $L^0$ is its algebraic hull~\cite[9.2.6]{Zimmer:book:84}).

\medskip

We now have a finite ergodic $G$-equivariant extension 
$\pi:Y\to X$ such that the lift of $\alpha$
to $G\times Y\to \GL^1_{d}(\RR)$ is cohomologous
to a cocycle ranging in $L^{0}$:
\[
        \teta:G\times Y\lra L^{0},\qquad 
        \teta(g,y)=\fhi(g y)^{-1} f(\pi(y))^{-1}\,\alpha(g,\pi(y))\, f(\pi(y))\fhi(y).
\]
Our restriction on $f$ and the fact that $\fhi$ has finite range imply that
$\teta$ satisfies the integrability condition 
\begin{equation}
\tag{$L^{1}$}
        \int_{Y}\log\|\teta(g,y)\|\d\nu(y)\ < \infty\qquad(\forall\,g\in G).
\end{equation}
Let   $\ p:L^0\to H=L^{0}/\Ramen(L^{0})$ denote the quotient by the amenable radical. Then the cocycle
\begin{equation}
\label{e:pgamma}
        p\circ\teta:G\times Y\lra L^{0}\lra H    
\end{equation}
is a Zariski-dense in a connected centre-free semi-simple Lie group $H$ without 
compact factors.
This cocycle satisfies the ($L^{1}$) condition as well, and is almost ready for
an application of Theorem~\ref{T:cocycle1}.

\medskip

The only issue here is that the action of $G=G_{1}\times\cdots\times G_{n}$ on $(Y,\nu)$
might fail to be ergodically irreducible: although $G\acts (Y,\nu)$
is ergodic, this may not be true for the actions of the individual factors $G_{i}$.
However, since each $G_{i}$ acts ergodically on $(X,\mu)$, the $G_{i}$-action
on the finite extension $(Y,\nu)$ has a finite number of
ergodic components (at most $[L:L^{0}]$). For $i=1,\dots,n$ denote by 
\[
        P^{(i)}=(Y^{(i)}_{1},\dots,Y^{(i)}_{k_{i}})
\]
the partition of $(Y,\nu)$ into the $G_{i}$-ergodic components.
\begin{lem}\label{L:joint-part}
\ 
\begin{enumerate}
\item
\label{i:1}
        The partitions  $P^{(i)}$  are independent
        and the elements of the joint partition $P=P^{(1)}\vee\cdots\vee P^{(n)}$ are 
        transitively permuted by $G$; 
\item
\label{i:2}
        If $G^*$ denotes the stabiliser of an element, say $Z=\bigcap_{i=1}^{n} Y_{1}^{(i)}$, 
        of the joint partition $P$, then the map 
        \[
                q: Y\lra G/G^*, \quad q|_{gZ} \equiv gG^*
        \]
        is $G$-equivariant; 
\item
\label{i:3}
        The group $G^*$ is a direct product $G^*=G^*_{1}\times\cdots\times G^*_{n}$ of finite
        index subgroups $G^*_{i}<G_{i}$, and the action of each of $G^*_{i}$ on $Z$
        is ergodic.
\end{enumerate}
\end{lem}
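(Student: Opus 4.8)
The three assertions are statements about the finite $G$-set formed by the ergodic components, so the plan is: first determine how $G$ permutes the $P^{(i)}$ and their join, then derive independence from the commutation of the factors together with ergodicity of $G$ on $Y$, and finally read off the product decomposition of the stabiliser $G^*$ together with ergodicity of its factors — the latter using that a finite-index subgroup of a group with spectral gap again has spectral gap, hence acts ergodically. For the first point I would fix $i$ and note that, since each $g\in G_j$ with $j\ne i$ commutes with $G_i$, the translate $gB$ of a $G_i$-ergodic component $B\subset Y$ is again a $G_i$-ergodic component (it is $G_i$-invariant, and $G_i$ acts on it ergodically via conjugation by $g$); essential uniqueness of the ergodic decomposition then shows that $G$ acts on the finite set $P^{(i)}$ through a quotient in which $G_i$ is trivial. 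The parts of $P^{(i)}$, being ergodic components, have positive measure, and $G$ permutes them while $(Y,\nu,G)$ is ergodic, so the action on $P^{(i)}$ is transitive; hence $\prod_{j\ne i}G_j$ acts transitively on $P^{(i)}$ and each part has mass $1/k_i$. In particular $G$ permutes the parts of $P=P^{(1)}\vee\cdots\vee P^{(n)}$.

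The heart of the matter is the independence of $P^{(1)},\dots,P^{(n)}$, and I expect this to be the main obstacle. The tool is the commutation argument underlying Proposition~\ref{P:CRL}: for $B\in P^{(i)}$ the indicator $\mathbf 1_B$ is $G_i$-invariant, and its conditional expectation onto a sub-$\sigma$-algebra generated by (some of) the remaining partitions is, using commutation of the factors and the fact that such a sub-$\sigma$-algebra is preserved by $G$, invariant under all of $G$, hence constant — which forces the corresponding factorisation of $\nu$. In two-factor form this readily yields that $P^{(i)}$ is independent of the common refinement $\bigwedge_{j\ne i}P^{(j)}$; promoting this to full mutual independence is the delicate step. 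I would carry it out via the chain rule for independence, inducting on the number of factors: restrict each sub-product $G_1\times\cdots\times G_k$ to its finitely many ergodic components on $Y$, observe that each such component is again a finite ergodic extension of $X$ on which every remaining $G_l$ has finitely many ergodic components, and apply the inductive hypothesis there, then reassemble using Step~1 (so that the relevant parts of the various partitions carry equal mass). Independence in particular forces every part of $P$ to have positive measure, which together with the previous paragraph gives that $G$ permutes the parts of $P$ transitively; this is part~(1).

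Part~(2) is then immediate: with $Z=\bigcap_i Y_1^{(i)}$ and $G^*=\mathrm{Stab}_G(Z)$, transitivity furnishes a $G$-equivariant identification of the (essentially finite) quotient of $Y$ modulo $P$ with $G/G^*$ sending $gZ$ to $gG^*$, and $q$ is this identification precomposed with $Y\to Y/P$, hence $G$-equivariant. For part~(3), independence identifies $P$ with $\prod_i P^{(i)}$ as a $G$-set on which $G_i$ fixes the $i$-th coordinate; letting $G_i^*\le G_i$ be the subgroup fixing $Y_1^{(j)}$ for every $j\ne i$ — a finite intersection of finite-index subgroups, hence of finite index — one checks directly that $G^*=G_1^*\times\cdots\times G_n^*$. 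Finally each $G_i^*$ has finite index in $G_i$, hence inherits the spectral gap of Theorem~\ref{T:cocycle2} and is in particular ergodic on $X$; since $Z$ is cut out from a $G_i^*$-ergodic component of $Y$ by fixing the other coordinates, the product structure of the $G$-action on $P$ shows $G_i^*$ acts ergodically on $Z$, completing the proof.

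The step I expect to require the most care is the upgrade from the pairwise-type independence that the commutation argument gives for free to the full mutual independence of all $n$ partitions, since this is exactly the place where the product structure of $G$, ergodicity on $Y$, and the finite-extension hypothesis must be used in concert; the rest is bookkeeping with finite $G$-sets and the standard fact that finite-index subgroups preserve spectral gap.
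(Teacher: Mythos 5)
Your Step~1 and the reduction of parts (2)--(3) to part (1) are fine, but the mechanism you propose for the crucial independence step fails. For $B\in P^{(i)}$ and $\mathcal B=\sigma\bigl(P^{(j)}:j\neq i\bigr)$, the function $\varphi=\mathbb E[\mathbf 1_B\mid\mathcal B]$ is \emph{not} $G$-invariant: since $\mathcal B$ is globally $G$-invariant, for $g\in G_l$ with $l\neq i$ one gets $\varphi\circ g^{-1}=\mathbb E[\mathbf 1_{gB}\mid\mathcal B]$, and $gB$ is in general a \emph{different} element of $P^{(i)}$; commutation of the factors only gives $G_i$-invariance. In fact, asserting that all the $\mathbb E[\mathbf 1_{gB}\mid\mathcal B]$ coincide is (after summing over a transversal of the transitive action on $P^{(i)}$) exactly the statement that each equals $1/k_i$, i.e.\ the independence you are trying to prove -- the step is circular. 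The invariance argument can be salvaged by conditioning instead on the $\sigma$-algebra of $\prod_{j\neq i}G_j$-invariant sets (then the conditional expectation is invariant under $\prod_{j\neq i}G_j$ by measurability and under $G_i$ by commutation, hence constant by ergodicity of $G$ on $Y$), but for $n\geq 3$ that $\sigma$-algebra corresponds to the partition into $\prod_{j\neq i}G_j$-ergodic components, which is \emph{coarser} than every $P^{(j)}$ and far coarser than $\bigvee_{j\neq i}P^{(j)}$; so one only gets a weak independence, and the ``promotion'' is still entirely open. Your proposed induction does not close it as stated: a factor $G_l$ with $l>k$ does not preserve a $G_1\times\cdots\times G_k$-ergodic component, it only permutes these components, so ``every remaining $G_l$ has finitely many ergodic components'' on such a component is meaningless until one passes to stabilisers, and the stabiliser inside $\prod_{l>k}G_l$ need not be a product of subgroups of the $G_l$, which is what the inductive hypothesis requires. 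The same issue infects your part (3): for $n\geq 3$, ``$g$ stabilises $Y^{(j)}_1$'' is a joint condition on the tuple $(g_i)_{i\neq j}$, so $G^*\supseteq\prod_i G_i^*$ is clear but the reverse inclusion is not something one ``checks directly''; and ergodicity of $G_i^*$ on $Z$ does not follow from spectral gap on $X$ plus ``the product structure of the $G$-action on $P$'' -- it needs an argument (e.g.\ that a $G_i^*$-invariant $A\subseteq Z$ of positive measure has $G_i$-orbit conull in $Y^{(i)}_1$, and translates $gA$ with $g\notin G_i^*$ meet $Z$ in null sets).

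The paper avoids all of this with a short counting argument, carried out for $n=2$ and extended by induction after grouping $G=G_1\times(G_2\times\cdots\times G_n)$. For $n=2$: since $G$ is ergodic on $Y$ and $G_1$ fixes each element of $P^{(1)}$, the factor $G_2$ permutes $P^{(1)}$ transitively; as $G_2$ also preserves each $Y^{(2)}_i$, it transitively permutes the $k_1$ sets $Y^{(1)}_j\cap Y^{(2)}_i$, so each has measure $\nu(Y^{(2)}_i)/k_1$, and by symmetry also $\nu(Y^{(1)}_j)/k_2$ -- that is precisely independence, and transitivity on the join follows. The product form of $G^*$ is then genuinely direct in this two-factor situation, because the action on $P^{(1)}$ is through $G_2$ alone and on $P^{(2)}$ through $G_1$ alone; and ergodicity on $Z$ is obtained from ergodicity of $G$ on $Y$ by noting that any $g\in G$ sending a generic $z\in Z$ into a positive-measure $A\subseteq Z$ must have its $G_2$-component in $G_2^*$ and its $G_1$-component in $G_1^*$. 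So the conclusion you want is correct, but your route to the key independence statement would need to be replaced by (or repaired into) an argument of this kind.
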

\begin{proof}
We describe the case $n=2$, the general case following by induction.
Since $G$ acts ergodically on $(Y,\nu)$, the group $G_{2}\cong G/G_{1}$ permutes transitively
the collection $P^{(1)}$ of $G_{1}$-ergodic components. Hence, for each $1\le i\le k_{2}$,  the 
$G_{2}$-action transitively permutes the $k_{1}$ sets $Y^{(1)}_{j}\cap Y^{(2)}_{i}$, $1\le j\le k_{1}$.
In particular, each of these sets has size $\mu(Y^{(1)}_{j}\cap Y^{(2)}_{i})=\mu(Y^{(2)}_{i})/k_{1}$.
Similarly, $G_{1}$ transitively permutes the collection
$\{ Y^{(1)}_{j}\cap Y^{(2)}_{i} : 1\le i\le k_{2}\}$ and we obtain 
$\mu(Y^{(1)}_{j}\cap Y^{(2)}_{i})=\mu(Y^{(1)}_{j})/k_{2}$.
Thus $P^{(1)}\perp P^{(2)}$ and we proved claim~\eqref{i:1}. 

It further follows that the $G$-action on the elements of the joint
partition $P^{(1)}\vee P^{(2)}$ is through $G_{2}$ in the $P^{(1)}$-coordinate, 
and through $G_{1}$ in the $P^{(2)}$-coordinate.
Thus, the stabiliser of $Z=Y^{(1)}_{1}\cap Y^{(2)}_{1}$ has the form $G^*=G_{1}^*\times G_{2}^*$
with $[G_{1}:G_{1}^*]=k_{2}$ and $[G_{2}:G_{2}^*]=k_{1}$.
Therefore, the $G$-action on the elements of $P=P^{(1)}\vee P^{(2)}$ is through
$G/G^*$ as in~\eqref{i:2}.

Finally, to see the ergodicity of the $G^*$-action on $Z$, consider a positive measure subset 
$A\subset Z$ and a generic point $z\in Z$. 
Viewing $z$ and $A$ in $Y$, one can use the ergodicity of $G\acts (Y,\nu)$ to
deduce that $g z\in A$ for some $g\in G$.  
But since both $z$ and $g z$ are in $Z\subset Y^{(1)}_{1}$, the $G_{2}$-component
of $g\in G=G_{1}\times G_{2}$ is in $G_{2}^*$. Similarly, $z,g z\in Z\subset Y^{(2)}_{1}$ 
implies that the $G_{1}$-component of $g$ is in $G_{1}^*$. 
Hence we proved that for a generic $z\in Z$ and a positive measure $A\subset Z$
there is an element $g\in G^*=G_{1}^*\times G_{2}^*$ with $g z\in A$.
This proves the ergodicity claim in~\eqref{i:3}.
\end{proof}

We now return to the cocycle $p\circ\teta$ in~\eqref{e:pgamma}. Let $Z\subset Y$
and $G^*$ be as in Lemma~\ref{L:joint-part}.
Then the restriction $\delta: G^*\times Z\lra H$ of $p\circ \teta$ to $G^*\times Z$
satisfies the ($L^{1}$) condition, and the action $G^*\acts Z$ is ergodically irreducible.
The $G^*_i$-representation on $L^{2}_{0}(Z)$ is easily seen to 
inherit the spectral gap property from the $G_i$-representation on $L^{2}_{0}(X)$.
Hence we may apply Theorem~\ref{T:cocycle1} to deduce that $\delta$ is cohomologous
to the homomorphism $\ro^*:G^*\to H$ obtained in Section~\ref{S:s-s:hull}.
Using a measurable cross-section $H\to L^{0}<\GL_{d}(\RR)$ we may re-adjust
the initial conjugation map $f:Y\to \GL_{d}(\RR)$ to achieve a situation where
the cocycle $\tilde \beta$ lifted from $\beta$
\[
        \tilde\beta:G\times Y\lra L^{0}\qquad \tilde\beta(g,y)=f(g y)^{-1}\,\alpha(g,\pi(y))\,f(y)
\]
has the property that its restriction to $G^*\times Z$ projected \emph{via} $L^{0}\xrightarrow{p} H$
is the homomorphism $\ro^*:G^*\to H$
\[
        f(g z)^{-1}\,\alpha(g,\pi(z))\,f(z)=\ro^*(g)\qquad(g\in G^*<G,\ z\in Z\subset Y).
\]
Finally, the fact that the $G\acts Y$ has $G\acts G/G^*$ as a factor with $Z$ being
the preimage of $eG^*\in G/G^*$ means that the whole cocycle $p\circ \beta:G\times Y\to H$ 
is induced from the homomorphism $\ro^*:G^*\to H$ \emph{via} $Y\to G/G^*$ as claimed.
\end{proof}

\section{Lattices in Products}
\label{S:lattices}

In this section, we address Theorem~\ref{T:lattices}.
As a general fact for a lattice $\Gamma$ in a product group $G$, recall that it is equivalent to assume
property~(T) for $\Gamma$ or for $G$ or for all factors of $G$, see \emph{e.g.}~\cite{dHarpeValette:T:89}.

\medskip

\begin{proof}[Completion of the proof of Theorem~\ref{T:lattices}]
We have a cocycle $\alpha:\Gamma\times M\to \GL^1_d(\RR)$ verifying the~$(L^\infty)$ condition just
as in the proof of Theorem~\ref{T:product-of-T}.  Since $\Gamma$ is
finitely generated by property~(T)~\cite{Kazhdan67}, we may choose a word-length $\ell_\Gamma$, noting that what
follows will not depend on this choice.
The integrability assumption on $\Gamma$ means that there is a cocycle $c:G\times
G/\Gamma\to \Gamma$ in the canonical class such that
\begin{equation}\label{e:int-cocycle}
\int_{G/\Gamma} \ell_\Gamma (c(g, q))\d m_{G/\Gamma}(q)\ < \infty
\qquad(\forall\,g\in G).
\end{equation}
This integrability condition is known to hold for classical lattices (see \S~2
in~\cite{Shalom:Inven:00}) and for Kac--Moody groups viewed as lattices (see~\cite{Remy04}).

\smallskip

Let $L$ be the algebraic hull of $\alpha$, $L^0$ its neutral component and
$H= L^0/\Ramen(L^0)$. We record the following.

\begin{lem}\label{l:non-compact}
The group $H$ is non-compact.
\end{lem}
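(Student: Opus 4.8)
The plan is to argue by contradiction: suppose $H = L^0/\Ramen(L^0)$ is compact. Since $H$ is connected centre-free semi-simple without compact factors (by its construction as the quotient by the amenable radical), compactness forces $H$ to be trivial, i.e.\ $L^0$ is amenable. As $L^0$ has finite index in $L$, the algebraic hull $L$ of $\alpha$ is itself amenable. This means the derivative cocycle $\alpha:\Gamma\times M\to\GL^1_d(\RR)$ is cohomologous to a cocycle ranging in an amenable group; since $\Gamma$ has property~(T), by~\cite[9.1.1]{Zimmer:book:84} it is in fact cohomologous to a cocycle into a compact subgroup of $\GL^1_d(\RR)$, hence into $\mathrm{O}_d(\RR)$. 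Equivalently, $\Gamma$ preserves a measurable Riemannian metric on $M$.

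The next step is to invoke Zimmer's theorem from~\cite{Zimmer:spectrum:91}: a property~(T) group acting smoothly and volume-preservingly on a compact manifold while preserving a measurable metric has discrete spectrum on $L^2(M)$. But this directly contradicts the hypothesis that $\Gamma\acts M$ is \emph{mixing}: a mixing action on a non-atomic probability space has no non-constant finite-dimensional invariant subspaces (the matrix coefficients on $L^2_0(M)$ are $C_0$, which is incompatible with the almost-periodic behaviour forced by discrete spectrum, unless $L^2_0(M)=0$, i.e.\ $M$ is trivial). Since $M$ has non-zero dimension, this is the desired contradiction, so $H$ is non-compact.

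The one point requiring a little care is that property~(T) is a hypothesis on the factors $G_i$ (equivalently on $G$), and one must transfer it to $\Gamma$; this is exactly the general fact recalled at the opening of this section (\cite{dHarpeValette:T:89}), so $\Gamma$ does have property~(T), and the citation to~\cite[9.1.1]{Zimmer:book:84} applies verbatim. I do not expect any real obstacle here: the argument is a direct transposition of the corresponding step in the proof of Theorem~\ref{T:product-of-T}, the only difference being that mixing (rather than mere ergodicity of each factor) is what rules out the isometric/discrete-spectrum alternative in one stroke, rather than through the finer analysis of the compact model $K$.
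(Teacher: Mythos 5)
Your argument is correct and is essentially the paper's own proof: assuming $H$ compact, you deduce that the hull $L$ is amenable, invoke \cite[9.1.1]{Zimmer:book:84} via property~(T) of $\Gamma$ to get an invariant measurable Riemannian structure, and then contradict mixing through Zimmer's discrete-spectrum theorem~\cite{Zimmer:spectrum:91}. The only additions are harmless elaborations (why discrete spectrum is incompatible with mixing, and the transfer of property~(T) from the factors $G_i$ to $\Gamma$), both of which the paper also relies on implicitly.
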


\begin{proof}
Otherwise, $L$ would be amenable since $L^0$ has finite index in $L$.
Since $\Gamma$ has property~(T), we recall that this would imply that
the hull $L$ is compact~\cite[9.1.1]{Zimmer:book:84}. In other words,
$\alpha$ could be conjugated into ${\rm O}_{d}(\RR)$, which means
that it preserves a measurable Riemannian structure on $M$.
Thus, $\Gamma\acts M$ would have discrete spectrum by Zimmer's 
result recalled in Section~\ref{S:T:product-of-T}, contradicting
the mixing assumption.
\end{proof}

We consider now the induced $G$-space $X=G/\Gamma \ltimes M$.

\begin{lem}\label{L:ind-erg}
Each $G_i$ acts ergodically and with spectral gap on $X$.
\end{lem}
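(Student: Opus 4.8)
The plan is to transfer the spectral gap from the action of the factors $G_i$ on $X=G/\Gamma\ltimes M$ to a statement about the original lattice $\Gamma$, exploiting the irreducibility hypothesis $G_i\cdot\Gamma$ dense in $G$. First I would identify the $G$-representation on $L^2(X)$. Writing $G=G_i\times G_i'$, where $G_i'$ is the product of the other factors, the induced space $X=G/\Gamma\ltimes M$ has $L^2(X)$ unitarily isomorphic (as a $G$-representation) to the representation induced from $\Gamma$ to $G$ of the Koopman representation of $\Gamma$ on $L^2(M)$; restricting to $G_i$, a Mackey-type analysis shows that the $G_i$-representation on $L^2_0(X)$ is weakly contained in (in fact built from) the $\Gamma$-representation on $L^2_0(M)$ composed with the quotient maps appearing in the coset decomposition of $G_i\backslash G/\Gamma$.

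The key point is then this: since the lattice $\Gamma<G$ is \emph{irreducible}, i.e.\ $G_i\cdot\Gamma$ is dense in $G$ for every $i$, the closed subgroup $G_i<G$ acts ergodically on $G/\Gamma$, and more is true — the $G_i$-representation on $L^2_0(G/\Gamma)$ has a spectral gap precisely when $\Gamma$ (equivalently $G$, equivalently all $G_j$) has property~(T). Concretely: $G$ has property~(T) by hypothesis, hence so does $\Gamma$, hence the Koopman representation of $G$ on $L^2_0(G/\Gamma)$ has a spectral gap; I would then upgrade this to a spectral gap for the \emph{single factor} $G_i$ using irreducibility, which guarantees that any $G_i$-almost-invariant vector in $L^2_0(G/\Gamma\ltimes M)$ would be almost invariant under the closure of $G_i\Gamma$, namely all of $G$, and hence under $G$ itself — contradicting the spectral gap of the $G$-action. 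The same mechanism handles the extra $M$-fibre: an almost $G_i$-invariant unit vector $F_n\in L^2_0(X)$ produces, by the $G$-almost-invariance obtained via irreducibility, an almost $G$-invariant vector; but the $G$-action on $X=G/\Gamma\ltimes M$ has a spectral gap because $\Gamma$ has property~(T) and $L^2_0(X)\cong\mathrm{Ind}_\Gamma^G L^2_0(M)$ (property~(T) passes to induced representations from lattices), so no such sequence exists.

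Ergodicity of each $G_i$ on $X$ is the weak form of the same statement and follows either as a byproduct (spectral gap $\Rightarrow$ no invariant vectors $\Rightarrow$ ergodicity) or directly from density of $G_i\Gamma$ in $G$ together with ergodicity of $G$ on $X$ (which holds because $G$ acts transitively on $G/\Gamma$ and ergodically on the $M$-fibre through $c$). I expect the main obstacle to be the careful bookkeeping in the Mackey decomposition of $L^2_0(X)$ restricted to $G_i$ — in particular checking that the irreducibility hypothesis really does force a $G_i$-almost-invariant vector to become $G$-almost-invariant, which requires that the $G_i'$-translates of a $G_i$-almost-invariant vector remain close to it; this is where one genuinely uses that $\Gamma$ projects densely to $G_i'$ and that the cocycle $c$ together with the $\Gamma$-action on $M$ is controlled. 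Granting this, the spectral gap of $G$ on $X$ (from property~(T) of $\Gamma$) closes the argument.
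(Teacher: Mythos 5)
Your proposal has a genuine gap, and it lies exactly at the step you yourself flag as "the main obstacle": the passage from a $G_i$-almost-invariant vector in $L^2_0(X)$ to a $G$-almost-invariant one via density of $G_i\Gamma$ in $G$. Density arguments of this kind require (almost-)invariance under \emph{both} $G_i$ and $\Gamma$; a vector that is merely $G_i$-almost-invariant is moved in a completely uncontrolled way by $\Gamma$ (which acts on $X=G/\Gamma\ltimes M$ through the $G$-action, mixing the base and the $M$-fibre via the cocycle $c$), so nothing forces its $\overline{G_i\Gamma}=G$-orbit to stay close to it. The same objection defeats your "direct" argument for ergodicity: a $G_i$-invariant function need not be $\Gamma$-invariant, so ergodicity of $G$ on $X$ plus density of $G_i\Gamma$ does not yield ergodicity of $G_i$. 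A symptom of the problem is that your argument never uses the hypothesis that $\Gamma\acts M$ is \emph{mixing}, yet without it the lemma is simply false in general: by the duality principle, ergodicity of $G_i$ on $G/\Gamma\ltimes M$ is equivalent to ergodicity of the diagonal $\Gamma$-action on $G/G_i\times M$, and a product of two ergodic actions need not be ergodic.

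The paper's route is both different and much shorter. For ergodicity it uses the Gel'fand--Fomin duality just mentioned: irreducibility of $\Gamma$ gives ergodicity of $\Gamma$ on $G/G_i$, and then mixing of $\Gamma\acts M$ makes the product action $\Gamma\acts G/G_i\times M$ ergodic (this is the standard fact, cited to Stuck, that an ergodic action times a mixing action of the same group is ergodic). For the spectral gap there is no need for any induced-representation or Kesten-type analysis: each factor $G_i$ itself has property~(T), so once the $G_i$-action on $X$ is ergodic (no invariant vectors in $L^2_0(X)$), property~(T) immediately rules out almost-invariant vectors as well. If you repair your write-up, replace the density/almost-invariance mechanism by this duality-plus-mixing argument, and invoke property~(T) of $G_i$ (not of $\Gamma$ or $G$) for the gap.
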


\begin{proof}
The ergodicity of $G_i\acts G/\Gamma \ltimes M$ is equivalent to the ergodicity of $\Gamma$ on
$G/G_i \times M$ (Gel'fand--Fomin duality principle). Since $\Gamma$ is irreducible in $G$, it acts
ergodically on $G/G_i$. Thus the ergodicity on $G/G_i \times M$ is a well-known consequence of the
fact that $\Gamma\acts M$ is mixing (see \emph{e.g.}~\cite[3.7]{Stuck}).
The spectral gap follows from ergodicity by property~(T) of $G_i$.
\end{proof}

By property~(T) of $\Gamma$, any ergodic $\Gamma$-action has the spectral gap;
using Proposition~\ref{P:taming}, we can assume that some conjugate $\alpha^f$
of $\alpha$ ranges in $L$ and at the same time satisfies the~$(L^1)$ condition.
Consider the induced cocycle
$$\beta: G\times X\lra L, \qquad \beta(g, (q, x)) = \alpha^f(c(g, q), x).$$
It is a general fact that the operation of inducing cocycles does not change the algebraic hull,
see Lemma~3.1 in~\cite{Stuck}. Therefore:

\begin{lem}
The cocycle $\beta$ is Zariski-dense in $L$.\hfill\qedsymbol
\end{lem}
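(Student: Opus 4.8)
The plan is to appeal to the principle that inducing a cocycle does not change its algebraic hull, which is Lemma~3.1 in~\cite{Stuck}. Since $\alpha^f$ is cohomologous to $\alpha$ it has the same algebraic hull, namely $L$; hence the induced cocycle $\beta$ has algebraic hull $L$, and this is exactly the asserted Zariski-density. Below I indicate how I would check the two inclusions underlying this principle in the situation at hand.

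First, $\beta$ visibly ranges in $L$: by definition $\beta(g,(q,x))=\alpha^f(c(g,q),x)$, the canonical cocycle $c$ takes its values in $\Gamma$, and $\alpha^f$ ranges in $L$. So the algebraic hull of $\beta$ is contained in a conjugate of $L$. Here the hull is well defined because $G\acts X$ is ergodic by Lemma~\ref{L:ind-erg}; likewise $\Gamma\acts M$ is ergodic, being mixing.

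For the reverse inclusion I would use the distinguished fibre of $X=G/\Gamma\ltimes M$ over $e\Gamma\in G/\Gamma$. Realising the canonical class by a measurable cross-section $\sigma$ with $\sigma(e\Gamma)=e$ — which only modifies $c$, hence $\beta$, within its cohomology class — one obtains $c(\gamma,e\Gamma)=\gamma$ for all $\gamma\in\Gamma$; thus $\Gamma$ preserves $\{e\Gamma\}\times M\cong M$ and acts there through its original action on $M$, while the restriction of $\beta$ to $\Gamma\times(\{e\Gamma\}\times M)$ equals $\alpha^f$. Now if some conjugate of $\beta$ ranged in a proper algebraic subgroup $L'<L$, restricting the conjugating map to this fibre would produce a conjugate of $\alpha^f$, hence of $\alpha$, ranging in $L'$, contradicting that $L$ is the algebraic hull of $\alpha$ (a proper algebraic subgroup of $L$ cannot contain a conjugate of $L$). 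Hence no such $L'$ exists and $\beta$ is Zariski-dense in $L$.

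The one point needing care is the purely formal verification that restriction to the base-point fibre inverts the induction procedure, together with the observation that the cohomology class of $\beta$ does not depend on the cross-section chosen to realise $c$; both follow directly from the definitions recalled in the introduction and are in any case subsumed in Lemma~3.1 of~\cite{Stuck}. I would emphasise that this step is purely about algebraic hulls and uses neither the integrability hypothesis~\eqref{e:int-cocycle}, nor property~(T), nor the spectral gap.
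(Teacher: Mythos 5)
Your proposal takes essentially the same route as the paper, whose entire proof of this lemma is the invocation of Lemma~3.1 of~\cite{Stuck} (inducing a cocycle does not change its algebraic hull) together with the fact that $\alpha^f$ is cohomologous to $\alpha$ and hence has hull $L$. The only caveat is that your supplementary sketch of the reverse inclusion --- restricting a conjugating map for $\beta$ to the fibre $\{e\Gamma\}\times M$ --- is not literally valid for non-discrete $G$, since that fibre is a null set and the conjugating map is only defined almost everywhere; one needs a Fubini argument over generic fibres $\{h\Gamma\}\times M$ (yielding a conjugate of $\alpha^f$, which suffices as hulls are defined up to conjugacy), and this is precisely what the cited lemma of Stuck takes care of.
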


Further, we claim that induction using an integrable cocycle $c$ preserves integrability:

\begin{lem}
The cocycle $\beta: G\times X\to L$ satisfies the $(L^1)$ condition.
\end{lem}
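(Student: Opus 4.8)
The goal is to estimate $\int_X \log\|\beta(g,(q,x))\|\, d(m_{G/\Gamma}\times \mathrm{vol})$ for each fixed $g\in G$, using only the integrability of the cocycle $c$ (condition~\eqref{e:int-cocycle}) and the fact that $\alpha^f$ satisfies the $(L^1)$ condition. By definition $\beta(g,(q,x)) = \alpha^f(c(g,q),x)$, so the plan is to first integrate in $x$ over $M$ and only afterwards in $q$ over $G/\Gamma$. For fixed $q$, the inner integral is $\int_M \log\|\alpha^f(c(g,q),x)\|\, d\mathrm{vol}(x) = \ell(c(g,q))$, where $\ell$ is the subadditive ``cocycle length'' on $\Gamma$ defined in~\eqref{e:coc-length}. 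Thus the whole quantity equals $\int_{G/\Gamma} \ell(c(g,q))\, d m_{G/\Gamma}(q)$, and the task reduces to bounding $\ell(c(g,q))$ in terms of the word-length $\ell_\Gamma(c(g,q))$, which \emph{is} integrable by assumption.

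The key step is Remark~\ref{R:word-length}: since $\Gamma$ is finitely generated (property~(T)) and $\ell = \log\circ\exp(\ell)$ is a subadditive function on $\Gamma$ which is bounded on the finite generating set, it satisfies a linear bound $\ell(\gamma) \le C_0\cdot \ell_\Gamma(\gamma)$ for all $\gamma\in\Gamma$, with $C_0 = \max_{s}\ell(s)$ over the generating set $s$. (Here one uses that $\alpha^f$ satisfies $(L^1)$, so that $\ell$ is finite-valued, together with Lemma~\ref{L:bdd}.) Substituting $\gamma = c(g,q)$ and integrating over $q\in G/\Gamma$ gives
\[
\int_X \log\|\beta(g,(q,x))\|\, d(m_{G/\Gamma}\times\mathrm{vol}) \ \le\ C_0\int_{G/\Gamma} \ell_\Gamma(c(g,q))\, dm_{G/\Gamma}(q) \ <\ \infty
\]
by~\eqref{e:int-cocycle}, for every $g\in G$. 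This is precisely the $(L^1)$ condition for $\beta$.

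A small point to check is that Fubini applies, i.e. that $(q,x)\mapsto \log\|\alpha^f(c(g,q),x)\|$ is jointly measurable and that interchanging the order of integration is legitimate; this follows because $\log\|\alpha^f(g,-)\|$ is bounded below (norms are $\ge 1$, so logs are $\ge 0$), making everything a nonnegative measurable function to which Tonelli applies without integrability hypotheses, and the finiteness of the iterated integral then also yields integrability of the function itself. One should also note that $c(g,-)$ takes values in $\Gamma$ (a countable group once $G$ is given the discrete topology on $\Gamma$... rather: $\Gamma$ is discrete), so $q\mapsto c(g,q)$ is a measurable map into a countable set and measurability of the composition is immediate.

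I do not anticipate a genuine obstacle here: the statement is essentially the observation that inducing along an $L^1$-cocycle transports the $(L^1)$ condition, and everything reduces to the linear comparison $\ell \lesssim \ell_\Gamma$ from Remark~\ref{R:word-length} combined with the hypothesis~\eqref{e:int-cocycle}. The only mild care needed is the bookkeeping of which measure lives where and ensuring the Tonelli interchange is justified by nonnegativity of $\log\|\cdot\|$.
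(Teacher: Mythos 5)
Your argument is correct and is essentially the paper's own proof: both reduce the double integral (via Tonelli/the fibre decomposition of $c(g,-)$ over $\Gamma$) to $\int_{G/\Gamma}\ell(c(g,q))\,\mathrm{d}m_{G/\Gamma}(q)$ and then conclude from the linear bound $\ell\lesssim\ell_\Gamma$ of Remark~\ref{R:word-length} together with the integrability hypothesis~\eqref{e:int-cocycle}. The only cosmetic difference is that the paper writes the outer integral explicitly as a sum over the fibres $A_\gamma$, whereas you keep it as an integral in $q$; the substance is identical.
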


\begin{proof}
Let $g\in G$ and decompose $G/\Gamma=\bigsqcup_{\gamma\in\Gamma} A_\gamma$ along
the fibres $A_\gamma$ of $c(g, -)$ over $\Gamma$. Thus~\eqref{e:int-cocycle}
can be written
$$\sum_{\gamma\in\Gamma} m_{G/\Gamma}(A_\gamma)\cdot \ell_\Gamma(\gamma)\ <\infty.$$
Let $\mu$ denote the probability measure on $M$ defined using the volume form.
Our choice of the conjugate $\alpha^f$ ensures
$$\ell(\gamma) := \int_X \log \|\alpha^f(\gamma, x)\|\d\mu(x)\ < \infty\qquad(\forall\,\gamma\in \Gamma).$$
Now the lemma follows from the identity
\begin{multline*}
\int_{G/\Gamma\times M} \log\|\beta(g, (q,x))\|\d(m_{G/\Gamma}\times \mu)(q,x)\\
=\sum_{\gamma\in\Gamma}\int_{A_\gamma}\int_X \log\|\alpha^f(\gamma,x)\|\d\mu(x)\d m_{G/\Gamma}=
\sum_{\gamma\in\Gamma} m_{G/\Gamma}(A_\gamma)\cdot \ell(\gamma)
\end{multline*}
together with the fact that $\ell$ is linearly controlled by $\ell_\Gamma$,
see Remark~\ref{R:word-length}.
\end{proof}

We now argue as in the proof of Theorem~\ref{T:cocycle2} and consider the $G$-equivariant finite
extension $\pi:Y\to X$ given by $Y=X\ltimes (L/L^0)$, which we recall is $G$-ergodic.
As we have seen in that proof, this provides us with a cocycle
$$\eta: G\times Y\lra L^0\lra H$$
which still retains the integrability and Zariski-density conditions established above for $\beta$.
We can therefore continue as in Section~\ref{S:cocycle2}.
Thus we have Zariski-dense representations of finite index subgroups $G_i^*<G$ to connected groups $H_i$
such that $H=\prod_i H_i$ and such that the product representation of $G^* = \prod_i G_i^*$ is conjugate
to $\eta|_{G^*}$. Upon possibly passing to finite extensions, this yields a representation of $G$ which is unbounded
by Lemma~\ref{l:non-compact}.
\end{proof}

\section{Additional Considerations}
\subsection{Geometric Approach to Cocycle Superrigidity}
\label{S:splitting}

Let $(M, g)$ be a compact Riemannian manifold. For each $x\in M$, consider the symmetric space 
$\GL(T_x M)/\mathrm{O}(g_x)$. The \emph{Pythagorean integral} (or \emph{induced space})
$$\int_X \GL(T_x M) / \mathrm{O}(g_x) \d x$$
is by definition the space of $L^2$-sections of this bundle (\cite[Ex.~47]{Monod:SuperSplitting:06})
and is a complete \cat0 space, indeed a Hilbert manifold of non-positive sectional curvature.
Volume-preserving diffeomorphisms of $M$ yield isometries of this space, and thus our original
motivation for investigating actions of product groups upon $M$ was the possibility to apply
to this situation the splitting theorem for general \cat0 spaces given in~\cite{Monod:SuperSplitting:06}.
Whilst we have prefered in this article to take advantage of the projective dynamics available in the much more
specific case of symmetric spaces, we shall nevertheless give below a very brief outline of this alternative approach.

\smallskip

The assumption of the splitting theorem is that the action on the \cat0 space is not \emph{evanescent},
which means that there should not be an unbounded set on which each group element has bounded displacement.
In order to reduce to that situation, one first replaces the above bundle by a smaller sub-bundle. More
specifically, we shall now sketch how to prove Theorem~\ref{T:cocycle1} following this geometric approach.
In view of the definition of the Pythagorean integral, we shall however replace the $L^1$-condition by the
slightly stronger $L^2$-condition (which holds in the situation arising from actions on compact manifolds).

\smallskip

As we have seen in Section~\ref{S:s-s:hull}, we may assume that our cocycle
$\alpha: G \times X\to H$, with $G=G_1\times G' _1$, is Zariski-dense when restricted to $G_1$
and we need to prove that its restriction to $G'_1$ cannot be Zariski-dense. In other words,
we shall explain how geometric splitting implies the key Proposition~\ref{P:H1H2H} for a simple
group $H$. In view of the cocycle reduction (Lemma~\ref{L:no-inv-in-Pm}), it essentially
suffices to derive a contradiction from the assumption that neither $G_1$ nor $G'_1$ admits an
equivariant measurable map from $X$ to probability measures on the geometric boundary of the symmetric space
$Y$ of $H$. Consider the induced $G$-space $S=\int_X Y$; the $L^2$-condition ensures that the obvious
isometric $G$-action is well-defined. The proof of Proposition~\ref{P:inv-in-Pm} can
be modified to yield the following statement for any proper \cat0 space $Y$:

\smallskip
\itshape
Let $L<G$ be a subgroup whose action on $X$ has the spectral gap. If the $L$-action on 
$S$ is evanescent, then there is an $\alpha|_L$-equivariant measurable map from $X$ to
probabilities on $\partial Y$.
\upshape

\smallskip

In particular, the $G$-action is non-evanescent and therefore the splitting theorem
(Theorem~9 in~\cite{Monod:SuperSplitting:06}) provides a canonical $G$-invariant subspace
$Z\se S$ with an isometric equivariant splitting $Z=Z_1\times Z'_1$ into $G_1$- and $G'_1$-spaces.

In fact, we shall use only a weaker statement which is a preliminary step in this splitting theorem,
namely the fact that $S$ contains a minimal (non-empty) $G'_1$-invariant \cat0 subspace $Z'_1$,
compare Proposition~35 in~\cite{Monod:SuperSplitting:06}.

\smallskip

We now have the following dichotomy. Either $Z'_1$ is bounded, in which case it is a point by minimality
(and the circumcentre lemma). Then $G'_1$ fixes a point in $S$, which means that $\alpha|_{G'_1}$ is conjugated
into a compact subgroup, contrary to our assumption. (Notice that this argument would not be possible if the simple
Lie group $H$ were allowed to be compact, compare Section~\ref{S:compact}.)
If on the other hand $Z'_1$ is unbounded, then it witnesses
the evanescence of the $G_1$-action on $S$. Indeed, minimality and convexity of the metric forces the displacement
lengths of elements of $G_1$ to be constant on $Z' _1$.
Applying the above statement to $L=G_1$, we have also a contradiction.

\smallskip

We observe that in the above outline of argumentation, just like in our main random walk argument, one needs only to assume
the spectral gap property for all but one factor (Remark~\ref{R:details}).

\subsection{Compact Targets}
\label{S:compact}
Finally, we explain why it is necessary in Theorem~\ref{T:cocycle1} to assume
that the semi-simple target group $H$ has no compact factors.
More specifically, the standard arithmetic construction below shows that the conclusions
of that theorem fail if the target group is a simple compact Lie group 
(compare with Proposition~\ref{P:H1H2H}).

\medskip

Fix some $n\ge 2$. Let $F=\QQ(\xi)$ be a totally real separable 
extension of $\QQ$ of degree $n+1$; we denote by $Gal(F/\QQ)$ the corresponding Galois
group and realize it as $Gal(F/\QQ)=\{\sigma_{0},\dots,\sigma_{n}\}$
where $\sigma_{i}:F\to \RR$ are Galois embeddings.
Upon replacing $\xi$ by a suitable rational translate,
one can assume that $\sigma_{0}(\xi)<0<\sigma_{1}(\xi),\dots,\sigma_{n}(\xi)$.
Let $D$ denote the diagonal matrix $D={\rm diag}[1,1,1,-\xi,-\xi]$
and consider the algebraic group $\mathbf{G}=\{ A\in \SL_{5} \mid  A^{T}DA=D\}$
defined over $F$. 
Under the Galois embeddings $\sigma_{i}$, the quadratic form defined 
by $D$ has signature $(3,2)$ for $1\le i\le n$, and is positive definite for $i=0$.
Denoting by $k_{i}$ the  Archimedean completions coming from $\sigma_{i}:F\to \RR$,
we get $G_{i}=\mathbf{G}(k_{i})\cong \SO_{3,2}(\RR)$ for $1\le i\le n$, 
and $K=\mathbf{G}(k_{0})\cong \SO_{5}(\RR)$. 
Let $\O_{F}$ denote the ring of integers of $F$. 
The group $\widetilde{\Gamma}=\mathbf{G}(\O_{F})$ embeds as a (uniform) lattice 
in the locally compact group
\[
        \widetilde{G}=\prod_{i=0}^{n} \mathbf{G}(k_{i})=K\times G_{1}\times\cdots\times G_{n}
\]
having dense injective projections on every sub-factor of the product 
\cite{BorelHarish-Chandra:62}. 
In particular, the projection $\tau:\widetilde{G}\to G=G_{1}\times\cdots\times G_{n}$
maps $\widetilde{\Gamma}$ to a lattice $\Gamma<G$, 
while $\pi:\widetilde{\Gamma}\to K$ is a dense embedding. 
Starting from a cocycle  $c:G\times G/\Gamma\to\Gamma$ in the canonical class, construct the cocycle
\begin{equation}\label{e:alpha}
        \alpha=\pi\circ \tau^{-1}\circ c:G\times G/\Gamma\to K.
\end{equation}
We claim that its restriction $\alpha|_{G_{i}}$ is Zariski
dense in $K\cong\SO_{5}(\RR)$ for each $1\le i\le n$. Since all the groups $G_{i}$
have Kazhdan's property~(T) which ensures the spectral gap assumption,
this claim indeed shows that Theorem~\ref{T:cocycle1} cannot hold for compact targets.

\smallskip

The proof of the claim relies on a well-known change of viewpoint
(which we have already used in an earlier section);
namely, it is equivalent to the ergodicity of the $G_i$-action
on the skew product $G/\Gamma\ltimes K$ associated to $\alpha$.
This latter action is isomorphic to 
\[
        G_{i}\acts \widetilde{G}/\widetilde{\Gamma}\ =\ (K\times G)/\widetilde{\Gamma}.
\]
We now recall that the following conditions are equivalent (Gel'fand--Fomin duality principle):
\begin{enumerate}
\item
        $G_{i}\acts \widetilde{G}/\widetilde{\Gamma}$ is ergodic;
\item
        $\widetilde{\Gamma}\acts \widetilde{G}/G_{i}=K\times G_{i}'$ is ergodic, where $G_{i}'=\prod_{j\neq i} G_{j}$;
\item
        $\widetilde{\Gamma}$ has a dense projection to $K\times G_{i}'$;
\item
        $\Gamma$ has a dense projection to $G_{i}'$.
\end{enumerate}
These conditions are satisfied by construction, proving the claim.

%

\end{document}